\documentclass[a4paper,reqno]{amsart}

\pdfoutput=1

\usepackage{microtype}
\usepackage[utf8]{inputenc}

\usepackage{amsmath,amstext,amssymb,mathrsfs,amscd,amsthm,indentfirst,bbm}
\usepackage{amsfonts}
\usepackage{stmaryrd}

\usepackage{todonotes}

\usepackage[style=ieee-alphabetic, url=false, backref=true, doi=false, isbn=false, eprint=false]{biblatex}
\addbibresource{biblio.bib}
\DeclareFieldFormat{url}{\textsc{url}\addcolon\space\url{#1}}

\usepackage{float}
\usepackage{adjustbox}
\usepackage{booktabs}
\usepackage{verbatim}
\usepackage{enumerate}
\usepackage{graphicx}
\usepackage{textcomp}
\usepackage{tikz,tikz-cd}
\usetikzlibrary{patterns}
\usepackage{ifthen}
\usepackage{url}

\usepackage{xcolor}
\definecolor{lightgreen}{RGB}{45,137,105}
\definecolor{purple}{RGB}{155,26,103}
\usepackage{hyperref}
\hypersetup{bookmarksdepth=3}
\PassOptionsToPackage{hyphens}{url}\usepackage{hyperref}
\hypersetup{
    colorlinks  = true,
    linkcolor   = purple,
    citecolor   = lightgreen,
    urlcolor    = [rgb]{0,.40,.80}
}
\usepackage[nameinlink,capitalize]{cleveref}
\usepackage{mathtools}
\usepackage{mleftright}
\usepackage{enumitem}

\usepackage[new]{old-arrows}

\usepackage{libertine}
\makeatletter
\renewcommand\th@plain{\slshape}
\makeatother

\addtolength{\textwidth}{+1.3cm}
\addtolength{\textheight}{+1.3cm}
\calclayout

\newcommand{\bC}{\mathbb C}
\newcommand{\bE}{\mathbb E}

\newcommand{\bN}{\mathbb N}
\newcommand{\bP}{\mathbb P}

\newcommand{\bS}{\mathbb S}

\newcommand{\bZ}{\mathbb Z}

\newcommand{\cC}{\mathcal C}

\newcommand{\cH}{\mathcal H}

\newcommand{\cL}{\mathcal L}

\newcommand{\cO}{\mathcal O}
\newcommand{\cP}{\mathcal P}
\newcommand{\cQ}{\mathcal Q}

\newcommand{\lra}{\longrightarrow}

\newcommand{\Gammahol}{\Gamma_{\mathrm{hol}}}
\newcommand{\Gammacon}{\Gamma_{\cC^0}}

\newcommand{\ad}{\mathrm{ad}}

\newcommand{\Pic}{\mathrm{Pic}}

\newcommand{\Hol}{\mathrm{Hol}}
\newcommand{\Rat}{\mathrm{Rat}}
\newcommand{\Map}{\mathrm{Map}}
\newcommand{\Hom}{\mathrm{Hom}}
\newcommand{\NS}{\mathrm{NS}}

\newcommand{\Grass}{\mathrm{Gr}}
\newcommand{\Flag}{\mathrm{Fl}}
\newcommand{\UConf}{\mathrm{UConf}}

\newcommand{\Conf}{\mathrm{Conf}}
\newcommand{\Mono}{\mathrm{Mono}}

\newcommand{\catSp}{\mathsf{Sp}}
\newcommand{\catJ}{\mathsf{J}}
\newcommand{\catS}{\mathsf{S}}

\newcommand{\opp}{\mathrm{op}}

\newcommand{\catEpi}{\mathsf{Epi}}
\newcommand{\catFun}{\mathsf{Fun}}
\newcommand{\catMod}{\mathsf{Mod}}

\DeclareMathOperator{\Ind}{Ind}

\DeclareMathOperator{\rank}{rk}
\DeclareMathOperator{\image}{im}

\DeclareMathOperator*{\colim}{colim}

\NewDocumentCommand{\set}{somm}{%
   \IfNoValueTF{#2}
    {\IfBooleanTF{#1}{\{#3 \mid #4\}}{\mleft\{ #3 \mathrel{}\middle\vert\mathrel{} #4 \mright\}}}
    {\mathopen{#2\{}#3 \mathrel{}#2\vert\mathrel{} #4\mathclose{#2\}}}%
  }

\newtheorem{theorem}{Theorem}[section]
\newtheorem*{theorem*}{Theorem}
\newtheorem{theorem-in-progress}[theorem]{Theorem-in-progress}

\newtheorem*{conjecture*}{Conjecture}

\newtheorem{definition}[theorem]{Definition}
\newtheorem{lemma}[theorem]{Lemma}
\newtheorem{remark}[theorem]{Remark}
\newtheorem{proposition}[theorem]{Proposition}
\newtheorem{example}[theorem]{Example}

\newtheorem{corollary}[theorem]{Corollary}

\newtheorem{maintheorem}{Theorem}

\newtheorem*{theorem:unitary-cccm}{Theorem~\ref{theorem:unitary-ccmm}}
\newtheorem*{proposition:top-derivative}{Proposition~\ref{proposition:top-derivative}}

\title{Weiss derivatives of holomorphic maps}
\author{Alexis Aumonier}
\address{Department of Mathematics, Stockholms universitet, 106 91 Stockholm, Sweden}
\email{alexis.aumonier@math.su.se}

\begin{document}

\begin{abstract}
We propose an orthogonal approach to the stable homotopy type of spaces of holomorphic maps to projective space. We study the Weiss towers of the unitary functors of holomorphic and continuous maps to $\mathbb{P}(V)$, and show that the former is polynomial and completely compute the latter. As an application we give a new proof of a stable splitting of Cohen--Cohen--Mann--Milgram.
\end{abstract}

\maketitle

\setcounter{tocdepth}{1}
\tableofcontents

\section{Introduction}

Let $X$ be a smooth projective complex variety. The space of holomorphic maps to projective space
\[
    \Hol(X, \bP^m) \subset \Map(X, \bP^m)
\]
naturally sits as a subspace of the space of continuous maps. The connectivity of this inclusion was first studied by Segal in his seminal work \cite{Segal} in the case of $X$ a Riemann surface of genus~$g$. His main contribution shows that, when restricted to the components of maps of degree $d$, the inclusion induces an isomorphism in homology in the range of degrees $* \leq (d-2g)(2m-1)$. A similar result holds for $X=\bP^n$ by \cite{Mostovoy}, or more generally any $X$ as above by \cite{Aumonier}. This fundamentally splits the problem of computing the homology of the space of holomorphic maps into two distinct parts: the \emph{stable range}, where the homology agrees with that of the continuous mapping space, and the \emph{unstable range}. The theory of models in rational homotopy gives a very good answer if one wishes to compute the stable homology with rational coefficients. In fact, it is in this setting that one observes a phenomenon of homological stability for maps of increasing degrees, giving its name to the stable range. The other part, the unstable range, is much less accessible. It seems to highly depend on the algebraic geometry of $X$ and very few full computations have been attempted \cite{CCMM,,Havlicek,,Totaro,,KallelMilgram}. The present paper is concerned with this unstable range.

Our contribution is not to directly give new computations of some unstable homology groups, but rather to suggest a way of organising the computation. Our approach exploits the functoriality
\[
    \text{complex vector space } V \longmapsto \Hol(X, \bP(V)),
\]
which can be studied within the framework of orthogonal calculus developed by Weiss \cite{Weiss}. We have collected in \cref{subsection:aide-memoire} the main points of this theory, but in the rest of this introduction we shall also spell out explicitly the consequences of our results.

Let $n$ be the dimension of $X$, and for any $m \geq n$ let $h \in H^2(\bP^m;\bZ) \cong \bZ$ be the positive generator. We say that a map $f \colon X \to \bP^m$ has degree $\deg(f) = f^*(h) \in H^2(X;\bZ)$. The mapping space $\Map(X,\bP^m)$ splits into connected components indexed by the degrees, and we write
\begin{equation}\label{equation:inclusion}
    \Hol_\alpha(X,\bP^m) \subset \Map_\alpha(X,\bP^m)
\end{equation}
for the subspace\footnote{Not necessarily connected nor non-empty, though it will be if $\alpha$ is ample enough.} of holomorphic maps of degree $\alpha \in H^2(X;\bZ)$. We showed in \cite{Aumonier} that this inclusion induces an isomorphism in homology in the range of degrees $* < (2m-2n+1)d(\alpha) + 2(m-n-1)$, where $d(\alpha)$ measures the ampleness of the class $\alpha$. We will recall a few properties of the number $d(\alpha)$ in \cref{subsection:conclusion}, but let us only say now that for $X = \bP^n$ and $\deg\alpha = d \in \bN$ it is $d$, and for $X = \Sigma_g$ and $\deg\alpha = d \in \bN$ it is $d-2g$. The main result of this article is:
\begin{maintheorem}\label{maintheorem:holomorphic}
Let $X$ be a connected smooth projective complex variety. Let $\alpha \in H^2(X;\bZ)$ and suppose that there exists a very ample line bundle $\cL$ on $X$ with $c_1(\cL) = \alpha$ and such that $c_1(\cL \otimes K_X^\vee)$ is ample, where $K_X^\vee$ is the dual of the canonical bundle of $X$. Let $N = \dim H^0(X,\cL)$ and $n = \dim X$. Then the unitary functor
\[
    V \longmapsto \Sigma^\infty_+\Hol_\alpha(X,\bP(\bC^{n+1} \oplus V))
\]
is $N$-polynomial\footnote{The dimension $N$ is independent of the chosen $\cL$ by the Kodaira vanishing theorem.}. In particular, there is a tower of spectra functorial in $X$ and $V$
\[
    \Sigma^\infty_+\Hol_\alpha(X,\bP(\bC^{n+1} \oplus V)) = T_N(V) \lra T_{N-1}(V) \lra \cdots \lra T_1(V) \lra T_0
\]
starting with $T_0 \simeq \Sigma^\infty_+ \left( K(H^1(X;\bZ),1) \times \bC\bP^\infty\right)$, and such that for each $1 \leq k \leq N$ there exists a spectrum $\Theta_k$ with an action of the unitary group $U(k)$ and a fibre sequence
\[
    \left(\Theta_k \otimes \bS^{\bC^k \otimes V}\right)_{hU(k)} \lra T_k(V) \lra T_{k-1}(V).
\]
Furthermore, for $1 \leq k \leq d(\alpha)$ there is an explicit construction of $T_k(V)$ and 
\[
    \Theta_k \simeq \left(\Ind_{U(1) \wr \Sigma_k}^{U(k)} \Sigma^{k(2n+1)}  \Conf_k(X, S(\cL))^{-kTX}\right) \sslash \Map(X,U(1))
\]
where $\Conf_k(X, S(\cL))$ is the configuration space of $k$ points in $X$ with labels in the sphere bundle $S(\cL)$. In that formula, the unitary group $U(1)$ acts on $S(\cL)$ and the symmetric group $\Sigma_k$ acts by permutations of factors and configurations. (See \cref{theorem:final-continuous-answer} for more details.)
\end{maintheorem}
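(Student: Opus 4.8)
The plan is to replace the space of holomorphic maps by an algebraic model in which the dependence on $V$ is linear, then resolve a discriminant to exhibit the functor $F(V):=\Sigma^\infty_+\Hol_\alpha(X,\bP(\bC^{n+1}\oplus V))$ as built, in finitely many steps, from homogeneous layers of degree at most $N$; polynomiality and the shape of the tower then become formal, and the explicit layers come by comparison with the continuous case. First I would set up a Dolbeault model. Fix a $\cC^\infty$ complex line bundle $L$ on $X$ with $c_1(L)=\alpha$, let $\cA$ be the contractible affine space of Dolbeault operators on $L$, and let $\cG=\Map(X,U(1))$ act on $\cA$ by gauge transformations. Since the amplitude of a line bundle depends only on its numerical class, the hypothesis that $\cL\otimes K_X^\vee$ is ample forces $L'\otimes K_X^\vee$ ample for every $L'\in\Pic^\alpha(X)$, so Kodaira vanishing gives $H^{>0}(X,L')=0$; hence the holomorphic sections assemble into a $\cG$-equivariant complex vector bundle $\cH\to\cA$ of constant rank $N$. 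Unwinding the description of a holomorphic map of degree $\alpha$ as a pair (Dolbeault operator, base-point-free system of sections) modulo gauge — essentially the model underlying \cite{Aumonier} — gives a natural equivalence
\[
F(V)\;\simeq\;\Sigma^\infty_+\bigl(\Hom((\bC^{n+1}\oplus V)^\vee,\cH)^{\mathrm{bpf}}\sslash\cG\bigr),
\]
where $\mathrm{bpf}$ is the open locus of surjections $\cO_X^{\oplus(n+1+\dim V)}\twoheadrightarrow L$. The key gain is that $V$ now enters only through the summand $\Hom(V^\vee,\cH)\cong V\otimes\cH$.

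Next I would resolve the discriminant. Inside the contractible total space $E(V)$ of $\Hom((\bC^{n+1}\oplus V)^\vee,\cH)\to\cA$, the locus $\mathrm{bpf}$ is the complement of the discriminant $\Delta(V)$ of systems possessing a common zero; passing to gauge quotients yields a fibre sequence of spectra
\[
F(V)\lra\Sigma^\infty_+\bigl(E(V)\sslash\cG\bigr)\lra C(V),
\]
whose middle term is the constant functor $\Sigma^\infty_+(B\cG)$ (as $E(V)$ is contractible) and whose cofibre $C(V)$ is a Thom-type collapse of $\Delta(V)$, made computable by a Vassiliev/scanning resolution stratified by the common zeros, carried out $\cG$-equivariantly and compatibly with the $V$-grading. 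Because a system vanishing at more than $N$ points of $X$ imposes linearly dependent conditions on $\cH$, this resolution is \emph{finite}: it equips $C(V)$ with a filtration indexed by $1\le k\le N$ whose $k$-th layer is a $k$-homogeneous unitary functor — the homotopy orbit for $U(1)\wr\Sigma_k$ (the local phases and permutations of the $k$ zeros) of a spectrum built on $\Conf_k(X,S(\cL))$ (the common zeros, labelled by the sphere bundle through the section's evaluation), Thom-twisted by $-kTX$ and shifted by $\Sigma^{k(2n+1)}$ to account for the configuration directions, the $\bC^{n+1}$-part and the projectivization, smashed with a one-point compactification of $V$ tensored with the rank-$k$ evaluation bundle, and with $\cG$ acting through the evaluations at the $k$ points, which factor through $U(1)^k\subseteq U(1)\wr\Sigma_k\subseteq U(k)$. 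For $1\le k\le d(\alpha)$ very-ampleness guarantees the evaluation is surjective over all of $\Conf_k(X)$ and all twists are the expected ones; disentangling the $\cG$-action and inducing along $\Ind_{U(1)\wr\Sigma_k}^{U(k)}$ then rewrites the layer as $(\Theta_k\otimes\bS^{\bC^k\otimes V})_{hU(k)}$ with $\Theta_k$ the displayed configuration-space spectrum.

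Finiteness of the filtration makes $C$ an $N$-polynomial functor; since $\Sigma^\infty_+(E(V)\sslash\cG)$ is constant, hence $0$-polynomial, the fibre $F$ is $N$-polynomial, and — naturality in $X$ being inherited from the model — its Weiss tower $T_N\to\cdots\to T_0$ and the fibre sequences $(\Theta_k\otimes\bS^{\bC^k\otimes V})_{hU(k)}\to T_k\to T_{k-1}$ are then produced by the structure theory recalled in \cref{subsection:aide-memoire}. Moreover, truncating the resolution at stage $k$ produces a $k$-polynomial functor $\operatorname{fib}(\Sigma^\infty_+(E(V)\sslash\cG)\to C_k(V))$ whose fibre against $F$ is built from homogeneous layers of degree $>k$; by the characterisation of the Weiss tower it equals $T_k$, which identifies $\Theta_k$ and, for $k\le d(\alpha)$, gives the explicit construction of $T_k(V)$ and the stated formula — matched against the fully computed continuous answer of \cref{theorem:final-continuous-answer}. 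Finally $T_0=F(\bC^\infty)=\Sigma^\infty_+\Hol_\alpha(X,\bP^\infty)$; letting $m\to\infty$ in the stability range of \cite{Aumonier} identifies this with $\Sigma^\infty_+\Map_\alpha(X,\bP^\infty)$, and since $\bP^\infty=K(\bZ,2)$ the degree-$\alpha$ component is $K(H^1(X;\bZ),1)\times\bC\bP^\infty$ — equivalently the constant middle term $\Sigma^\infty_+(B\cG)$ above.

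The main obstacle is Step 2: constructing and analysing the Vassiliev/scanning resolution of $\Delta(V)$ in this infinite-dimensional, $\cG$-equivariant setting, pinning down the labels, the Thom twist by $-kTX$, the shift $\Sigma^{k(2n+1)}$ and the induction $\Ind_{U(1)\wr\Sigma_k}^{U(k)}$ precisely, and — most delicately — controlling the interplay of the homotopy colimit $\sslash\cG$ with the homotopy limits that define the Weiss tower, together with the strata in the range $d(\alpha)<k\le N$ where general position may fail (these still contribute to polynomiality but not to the explicit formula). By comparison, the identification of $T_0$ is immediate, and the explicit description of $T_k$ and $\Theta_k$ for $k\le d(\alpha)$ should amount to a reinterpretation of the stability theorem of \cite{Aumonier} along the orthogonal-calculus direction.
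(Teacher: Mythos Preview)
Your approach differs substantially from the paper's, and the central step has a gap.

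\textbf{What the paper does.} For polynomiality the paper does \emph{not} resolve the discriminant by common zeros. Instead it uses the identification
\[
    \Gammahol(\cL \otimes W - 0) \cong \set{A \in \Hom(H^0(X,\cL)^\vee, W)}{\bP(\ker A) \cap \iota X = \emptyset}
\]
and filters by the \emph{rank of $A$} (equivalently $\dim\ker A$). There are at most $N+1$ strata, each is a bundle of Stiefel manifolds $\Hom_\catJ(\bC^{N-r},\bC^{n+1}\oplus V)$ over a space independent of $V$, and Miller's stable splitting makes each stratum (and the sphere bundles gluing them) polynomial of degree $\le N$; polynomiality follows inductively from the pushout squares. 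The explicit $\Theta_k$ for $k\le d(\alpha)$ are obtained by an entirely separate argument: the paper computes the full Weiss tower of the \emph{continuous} mapping space via Goodwillie/Malkiewich calculus (\cref{theorem:final-continuous-answer}), and then invokes the homological connectivity of the inclusion $\Hol_\alpha\subset\Map_\alpha$ from \cite{Aumonier} together with \cref{proposition:numerical-observation} to transfer the first $d(\alpha)$ stages.

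\textbf{The gap in your proposal.} Your Step~2 asserts that the Vassiliev filtration of the discriminant by the number $k$ of common zeros is finite with $k$-th layer a $k$-homogeneous unitary functor involving $S^{V\otimes(\text{rank-}k\text{ evaluation bundle})}$. This is not what that resolution produces. The $k$-th Vassiliev stratum fibres over $\Conf_k(X)$ with fibre the sections of $\cL\otimes(\bC^{n+1}\oplus V)$ vanishing at the configuration, i.e.\ $W_k\otimes(\bC^{n+1}\oplus V)$ where $W_k=\ker\bigl(H^0(X,\cL)\to\bigoplus_i\cL|_{x_i}\bigr)$. As a functor of $V$ its one-point compactification is $(\dim W_k)$-homogeneous, not $k$-homogeneous; for $k\le d(\alpha)$ one has $\dim W_k = N-k$, and for $k>d(\alpha)$ the dimension jumps, so the layer is not homogeneous at all. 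Nor does the resolution terminate at $k=N$: the section $A=0$ has every point of $X$ as a common zero, so $\Conf_k(X)$ contributes for all $k$. Your sentence ``a system vanishing at more than $N$ points imposes linearly dependent conditions'' is true, but it says $\dim W_k \ge N-k$ can be positive; it does not kill the layer. In effect you have grafted the shape of the Goodwillie/Malkiewich layers for the \emph{continuous} section space (which do involve the rank-$k$ evaluation bundle and are $k$-homogeneous) onto a Vassiliev resolution of the \emph{holomorphic} discriminant, where the $V$-dependence enters through the complementary kernel bundle instead. The paper's rank filtration is precisely the device that replaces this ill-behaved point-indexed filtration by one that is manifestly finite and whose pieces are polynomial by Miller.
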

Our suggested interpretation of this theorem is as follows. To understand the stable homotopy type of $\Hol_\alpha(X,\bP^m)$ it ``suffices" to understand \emph{finitely} many spectra $\Theta_1,\dotsc,\Theta_N$ and finitely many extensions problems. Furthermore, the first $d(\alpha)$ spectra and extensions are already determined. Of course, this will not be easy in general. In particular notice that for $X = \bP^n$ and $\deg \alpha = d$, the number $N = \binom{n+d}{d} = \frac{d^n}{n!} + o(d^n)$ grows much faster than $d(\alpha) = d$ if $n \geq 2$. We will see shortly what more can be said when $n = 1$. Despite this obstacle, our theorem pinpoints precise questions to ask about the unstable range, e.g. what is $\Theta_{d(\alpha)+1}$?

\bigskip

The polynomiality of the unitary functor $\Sigma^\infty_+\Hol_\alpha(X,\bP(\bC^{n+1} \oplus V))$ in \cref{maintheorem:holomorphic} is established by studying a filtration on holomorphic maps, essentially given by the dimension of the projective span of their images. Although our proof is very concrete and theoretically directly applicable to compute the spectra $\Theta_k$ in examples, as we shall see in \cref{section:case-study}, the computations quickly grow in size. Thus, to obtain a formula for $T_k(V)$ and $\Theta_k$ with $1 \leq k \leq d(\alpha)$, we study the analogous unitary functor of continuous maps. Being entirely in the realm of homotopy theory, we are are able to fully understand its Weiss tower.
\begin{maintheorem}\label{maintheorem:continuous}
Let $X$ be a finite CW complex of finite dimension $d$. Let $\cL$ be a complex line bundle on $X$ with first Chern class $c_1(\cL) = \alpha \in H^2(X;\bZ)$. Let $M > \frac{d}{2}$. Then the unitary functor
\[
    V \longmapsto \Sigma^\infty_+ \Map_\alpha(X, \bP(\bC^{M} \oplus V))
\]
has a converging Weiss tower:
\[
    \Sigma^\infty_+ \Map_\alpha(X, \bP(\bC^{M} \oplus V)) \simeq \lim \left( \cdots \lra \widetilde{T}_k(V) \lra \widetilde{T}_{k-1}(V) \lra \cdots \lra \widetilde{T}_1(V) \lra \widetilde{T}_0 \right)
\]
with $\widetilde{T}_0 \simeq T_0 \simeq \Sigma^\infty_+ \left( K(H^1(X;\bZ),1) \times \bC\bP^\infty\right)$. In particular, for each $k \geq 1$ there exists a spectrum $\widetilde{\Theta}_k$ with an action of the unitary group $U(k)$ and a fibre sequence
\[
    \left(\widetilde{\Theta}_k \otimes \bS^{\bC^k \otimes V}\right)_{hU(k)} \lra \widetilde{T}_k(V) \lra \widetilde{T}_{k-1}(V).
\]
Furthermore, for all $k \geq 1$ there is an explicit construction of $\widetilde{T}_k(V)$ and 
\[
    \widetilde{\Theta}_k \simeq \left(\Ind_{U(1) \wr \Sigma_k}^{U(k)} \Sigma^{k(2M-1)}  \Conf_k(X, S(\cL))^{-kTX}\right) \sslash \Map(X,U(1)).
\]
\end{maintheorem}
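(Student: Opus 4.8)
The plan is to replace the mapping space by a space of nowhere-vanishing sections, peel off a fixed gauge group, and then compute the Weiss tower of the resulting sections functor by a scanning/Snaith-type argument. \emph{Step 1: reduction to sections modulo gauge.} Fix a Hermitian metric on $\cL$ and set $\cG := \Map(X, U(1))$. A degree-$\alpha$ map $X \to \bP(\bC^M \oplus V)$ is exactly the datum of an injective bundle map $\cL^\vee \hookrightarrow \underline{\bC^M \oplus V}$ taken up to $\cG = \mathrm{Aut}(\cL^\vee)$; equivalently, writing $E_V := \cL \otimes (\bC^M \oplus V) = \cL^{\oplus M} \oplus (\cL \otimes V)$, the space $\Gamma^{\times}(X; E_V)$ of everywhere-nonzero sections carries a free $\cG$-action and $\Gamma^{\times}(X; E_V) \to \Map_\alpha(X, \bP(\bC^M \oplus V))$ is a principal $\cG$-bundle, whence
\[
    \Sigma^\infty_+ \Map_\alpha(X, \bP(\bC^M \oplus V)) \simeq \bigl( \Sigma^\infty_+ \Gamma^{\times}(X; E_V) \bigr)_{h\cG}.
\]
Since $\cG$ is independent of $V$ and acts on the unitary functor $G \colon V \mapsto \Sigma^\infty_+ \Gamma^{\times}(X; E_V)$ through natural transformations, and homotopy $\cG$-orbits commute with the formation of polynomial approximations and preserve $k$-homogeneity, the Weiss tower of $F := G_{h\cG}$ is obtained from that of $G$ by applying $(-)_{h\cG}$: one gets $\widetilde T_k \simeq (T_k G)_{h\cG}$ and $k$-th derivative $(\partial_k G)_{h\cG}$. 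This is the source of the outer $\sslash \Map(X,U(1))$ in the statement, and it reduces everything to the Weiss tower of $G$.

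\emph{Step 2: value at infinity and convergence.} The space of everywhere-nonzero sections of an infinite-rank bundle is contractible, so $G(\bC^\infty) \simeq \Sigma^\infty_+ \Gamma^{\times}(X; \cL^{\oplus \infty}) \simeq \bS$; hence $T_0 G \simeq \bS$, and applying $(-)_{h\cG}$ while using that $\cG$ is weakly equivalent to $U(1) \times H^1(X;\bZ)$ as a topological group ($X$ connected), one obtains $\widetilde T_0 \simeq \Sigma^\infty_+ B\cG \simeq \Sigma^\infty_+ \bigl( K(H^1(X;\bZ),1) \times \bC\bP^\infty \bigr)$, as claimed. Because $M > \tfrac{d}{2}$, the maps $\Gamma^{\times}(X; E_V) \to \Gamma^{\times}(X; E_{V \oplus \bC})$ are highly connected with connectivity growing in $\dim V$; combined with $\dim X < \infty$ this forces the layers of the tower to become increasingly connected, so the Weiss tower of $G$, hence of $F$, converges.

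\emph{Step 3: the derivatives (the heart).} Using $M > \tfrac{d}{2}$ again, the inclusion $\Gamma^{\times}(X; E_V) \hookrightarrow \Gamma(X; E_V)$ is highly connected, and a scanning/Pontryagin--Thom argument identifies $\Gamma^{\times}(X; E_V)$, naturally in $V$, with a labelled configuration space on $X$: a point records a finite set $\{x_1, \dots, x_k\} \subset X$ of would-be common zeros together with, at each $x_i$, a label in a based version of $S(\cL)_{x_i} \times S(\bC^M \oplus V)$. Feeding this model into the Snaith-type splitting of the Weiss tower of such a configuration-space functor, the $k$-th layer is assembled from ordered $k$-point configurations in which each point contributes: one copy of $\bC \otimes V$, so that the $k$ copies form $\bC^k \otimes V$ with monomial symmetry group $U(1) \wr \Sigma_k \subset U(k)$ (the $U(1)$'s rotating the $S(\cL)$-labels, $\Sigma_k$ permuting points); a fixed suspension coordinate of dimension $2M-1$ from the $S(\bC^M)$-factor; and a twist by $-TX$ from the diagonal in the Pontryagin--Thom collapse. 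Assembling gives
\[
    \partial_k G \simeq \Ind_{U(1) \wr \Sigma_k}^{U(k)} \Sigma^{k(2M-1)} \Conf_k\bigl( X, S(\cL) \bigr)^{-kTX},
\]
and $\widetilde\Theta_k := (\partial_k G)_{h\cG}$ is the stated spectrum. (When $X$ is only a finite CW complex one reads $-TX$ off a regular-neighbourhood embedding, as minus the normal bundle, and runs the scanning argument there.)

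\emph{Step 4: assembly, and the main obstacle.} For each $k \geq 1$, $D_k F(V) \simeq \bigl( \widetilde\Theta_k \otimes \bS^{\bC^k \otimes V} \bigr)_{hU(k)}$ is $k$-homogeneous with derivative $\widetilde\Theta_k$, so Weiss's classification of $k$-homogeneous functors yields the fibre sequences $\bigl( \widetilde\Theta_k \otimes \bS^{\bC^k \otimes V} \bigr)_{hU(k)} \to \widetilde T_k(V) \to \widetilde T_{k-1}(V)$; an explicit model for $\widetilde T_k(V)$ is given by truncating the configuration-space model of Step 3 to configurations of at most $k$ points (the homotopy-theoretic counterpart of the span-dimension filtration used for $\Hol$). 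Combined with Step 2 this produces the converging tower with $\widetilde T_0$ as asserted. The main obstacle is Step 3: setting up the scanning equivalence in this $\cL$-twisted, fibred-over-$X$, $U(1)$-equivariant setting and extracting the Weiss derivatives of the configuration-space functor while tracking precisely the $U(1) \wr \Sigma_k \hookrightarrow U(k)$-action and the $-kTX$ twist coming from the diagonal; the case of a general finite CW complex $X$ (in place of a manifold) is a further wrinkle.
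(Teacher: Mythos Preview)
Your Steps 1, 2 and 4 match the paper closely: the paper also invokes the principal $\Map(X,U(1))$-bundle description (citing Crabb--Sutherland) and then passes homotopy orbits through the tower exactly as you do, and the identification of $\widetilde T_0$ is as you say.

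The genuine difference is your Step 3. The paper does \emph{not} run a scanning/Pontryagin--Thom argument on $\Gamma^\times(X;E_V)$. Instead it factors the unitary functor as
\[
    \catJ \xrightarrow{\;r\;} \catS_*^X \xrightarrow{\;\Sigma^\infty_+\Gamma(X,-)\;} \catSp, \qquad r(V)=S(\cL\otimes(\bC^M\oplus V)),
\]
computes the \emph{Goodwillie} tower of the second arrow via Malkiewich's explicit tower for section spaces (which directly gives the layers as $\bigl(\Sigma^\infty_X E^{\boxtimes k}\otimes\bS^{-TX^k}\otimes\omega_!\bS^0\bigr)_{h(\Omega X\wr\Sigma_k)}$ and works for any finite CW complex $X$), and then shows by a Barnes--Eldred-style argument that restricting along $r$ carries the Goodwillie tower to the Weiss tower. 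The $U(1)\wr\Sigma_k\hookrightarrow U(k)$ structure and the $\Sigma^{k(2M-1)}$ shift are then extracted by a purely algebraic manipulation in parameterised spectra: the identity $\bS^{\cL\otimes V}\simeq(\Sigma^\infty_X S(\cL)_+\otimes\bS^V)_{hU(1)}$ together with an untwisting $\bS^{\cL\otimes\bC^M}\otimes\Sigma^\infty_X S(\cL)_+\simeq\bS^{2M}\otimes\Sigma^\infty_X S(\cL)_+$.

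What this buys: the paper's route sidesteps the manifold hypothesis entirely (Malkiewich's tower needs only a finite CW complex, and the $-TX$ twist appears formally from the cotower, not from a collapse map), and it produces a closed formula for $\widetilde T_k(V)$ rather than just the layers. Your scanning route is morally the same computation---Malkiewich's tower is a rigorous incarnation of the ``configuration-space model'' you invoke---but as written it leaves the hardest part unproven: you would still need to identify the Weiss derivatives of your labelled-configuration functor and track the monomial $U(1)^k$-symmetry, which is precisely the content of the paper's Goodwillie-to-Weiss transfer lemma. Your remark that one ``reads $-TX$ off a regular-neighbourhood embedding'' for a general CW $X$ is also doing real work that the paper avoids.
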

The precise version of the theorem appears as \cref{theorem:final-continuous-answer} in the text. Although it is stated for a finite dimensional CW complex $X$, we only use it in this paper for $X$ a smooth projective variety as a tool to compute the $\Theta_k$ in the range $1 \leq 1 \leq d(\alpha)$. Indeed, the fact that $\Theta_k \simeq \widetilde{\Theta}_k$ in that range follows from the homological connectivity of the inclusion~\eqref{equation:inclusion} and standard facts in unitary calculus. (We will see this in \cref{subsection:conclusion}.)

\subsection{Applications}
As mentioned above, when the variety $X$ is one-dimensional our results give most of the Weiss tower. In fact, for pointed holomorphic maps from $\bP^1$ we indeed compute the full tower. We exploit this observation to give a new conceptual proof of a stable splitting of Cohen--Cohen--Mann--Milgram \cite{CCMM}.
\begin{theorem:unitary-cccm}
The space of pointed holomorphic maps from $\bP^1$ stably splits:
\[
    \Sigma^\infty_+ \Hol_d^*(\bP^1, \bP(\bC^2 \oplus V)) \simeq \bigoplus_{i=0}^d \Sigma^{i\dim V}\left(\Conf_i(\bC)_+ \wedge (S^1)^{\wedge i}\right)_{\Sigma_i}.
\]
\end{theorem:unitary-cccm}
In our language, we say that the functor $\Sigma^\infty_+ \Hol_d^*(\bP^1, \bP(\bC^2 \oplus V))$ is $d$-polynomial and that its Weiss tower splits. We do not expect to observe such splittings in other examples, though it would be very interesting. For unpointed maps from $\bP^1$ of degree $d$, only the top derivative $\Theta_{d+1}$ is not given by our main theorem. We are nonetheless able to compute it. In fact, and perhaps surprisingly, we can give a formula for the top derivative $\Theta_N$ in general.
\begin{proposition:top-derivative}
Let $X$ be a connected smooth projective complex variety with $H^1(X;\bZ) = 0$, and let $\alpha \in H^2(X;\bZ)$, $\cL$ and $N$ be as in \cref{maintheorem:holomorphic}. Let $\iota \colon X \hookrightarrow \bP(\bC^N)$ be the canonical embedding associated to the very ample line bundle $\cL$.
Then the $N$th derivative of the $N$-polynomial unitary functor
\[
    V \longmapsto \Sigma^\infty_+\Hol_\alpha(X,\bP(\bC^N \oplus V))
\]
is given by
\[
    \Theta_N \simeq \bS^{\ad_N} \otimes \Sigma^\infty_+PU(N) \otimes \Sigma^\infty\Sigma^\mathrm{un}|\Grass(X,\cL)|
\]
where $|\Grass(X,\cL)|$ is the geometric realisation of the topological poset $\Grass(X,\cL)$ of non-empty linear subspaces $\emptyset \neq P \subset \bP(\bC^N) - \iota X$ ordered by inclusion, $\Sigma^\mathrm{un}$ denotes the unreduced suspension, and $\ad_N$ is the adjoint representation of $U(N)$.
\end{proposition:top-derivative}
The assumption on the first cohomology of $X$ is purely to simplify the poset. In general, one needs to also vary $\cL$ in the Picard variety $\Pic^\alpha(X)$, see \cref{subsection:top-derivative} for more details. Spaces of linear projective subspaces not intersecting a given subset, such as the poset in our result above, appear repeatedly throughout our arguments. We think it would be worthwhile to study their topology.

\bigskip

Our proposal in this article is that orthogonal calculus divides the unstable homology of spaces of holomorphic maps, and lets us ask about the pieces. Notably, our approach lends itself to concrete computations, and we have thus collected a few of them in the last \cref{section:case-study}. Our motivation there is mainly to explain how the theory works in concrete examples.

\subsection*{Outline}
We recall the basics of unitary calculus in \cref{section:unitary-calculus}. In \cref{section:polynomiality} we prove the polynomiality statement of \cref{maintheorem:holomorphic}. The computation of the derivatives is deduced in \cref{subsection:conclusion}, as it follows from \cref{maintheorem:continuous} proven in \cref{section:continuous}. We apply our results to obtain the stable splitting of \cref{theorem:unitary-ccmm} in \cref{section:splitting}. Finally, in \cref{section:case-study} we showcase some computations in the theory.

\subsection*{Conventions}
We write $\catS$ and $\catSp$ for the $\infty$-categories of spaces and spectra, but we are lax about which model the reader prefers: we will only use formal manipulations. Given a space $B$, we will often use the category of parameterised spaces over $B$ denoted $\catS^B$: we will implicitly use the (un)straightening equivalence and see an object $Y \in \catS^B$ as either a functor $Y \colon B \to \catS$ or a space $Y \to B$. Similar considerations apply for pointed parameterised spaces: either a functor $Y \colon B \to \catS_*$ or a space $Y \to B$ with a section $B \to Y$.

\subsection*{Acknowledgments} 
I am very grateful to Oscar Randal-Williams for first suggesting to use orthogonal calculus for maps into $\bP(V)$, and for many helpful discussions which kept this project afloat. I was supported by his ERC grant under the European Union's Horizon 2020 research and innovation programme (grant agreement No. 756444). Thanks to Samuel Muñoz-Echániz as well for excellent help with orthogonal calculus. Thanks to Greg Arone for making me aware of \cite{Malkiewich}, which simplified parts where I was reinventing the wheel. Finally, I was also supported by Dan Petersen's Wallenberg Scholar fellowship during the writing of this article.

\section{Unitary calculus}\label{section:unitary-calculus}

\subsection{Aide-mémoire on calculus}\label{subsection:aide-memoire}

Michael Weiss introduced his orthogonal calculus in \cite{Weiss} for functors on a certain topological category of vector spaces. In this paper we shall work with the unitary version of this calculus\footnote{Weiss' original work is directly applicable to this setting as done in \cite[Example~10.2]{Weiss}, but see also \cite{Taggart} for more details.} and only deal with functors
\[
    F \colon \catJ \to \catSp
\]
from the category $\catJ$ of finite-dimensional complex inner product spaces and linear isometries to the category of spectra $\catSp$. To such a functor this calculus associates for each $k \geq 1$ a $U(k)$-spectrum $\Theta_k F \in \catSp^{BU(k)}$, called the \emph{$k$th derivative}, and a tower of functors
\[
\begin{tikzcd}
& & \vdots \arrow[d] &                                              \\
& & T_2F(-) \arrow[d]   & \Theta_2 F(-) \otimes_{U(2)} \bS^{2\cdot -} \arrow[l] \\
& & T_1F(-) \arrow[d]   & \Theta_1 F(-) \otimes_{U(1)} \bS^{1\cdot -} \arrow[l]    \\
F(-) \arrow[rr] \arrow[rru] \arrow[rruu] \arrow[rruuu] & & T_0F(-) & \colim\limits_{n \to \infty} F(\bC^n) \arrow[l, equal]&          
\end{tikzcd}
\]
called the \emph{Weiss tower} of $F$. We explain a bit more: $\bS^{k \cdot V}$ is the suspension spectrum of the one-point compactification of $k \cdot V = \bC^k \otimes V$ on which the unitary group $U(k)$ acts naturally on the $\bC^k$ component, and the symbol $-\otimes_{U(k)}-$ denotes the homotopy orbits for the diagonal action of $U(k)$. For any $V \in \catJ$, the sequence
\[
    \Theta_k F \otimes_{U(k)} \bS^{kV} \lra T_kF(V) \lra T_{k-1}F(V)
\]
is a (co)fibre sequence of spectra, and the left term is called the $k$th layer of the tower. We recall a few basic definitions from the theory.
\begin{definition}[{\cite[Definition~5.1]{Weiss}}]\label{definition:polynomial}
Let $k \geq 0$. We say that a unitary functor $F \colon \catJ \to \catSp$ is \emph{$k$-polynomial} if the natural morphism
\[
    F(V) \lra \lim_{0 \neq W \subset \bC^{k+1}} F(V \oplus W) = \tau_kF(V)
\]
is an equivalence. In the formula, the limit is taken over the topological poset of non-zero subspaces of $\bC^{k+1}$ (see \cite[Proof of Lemma~e.3]{WeissErratum} for details on this limit).
\end{definition}
Let $\mathsf{Poly}^{\leq k}(\catSp^\catJ) \subset \catSp^\catJ$ be the full subcategory on those $k$-polynomial functors. This inclusion admits a left adjoint, and the unit applied to any unitary functor $F$ yields the $k$th polynomial approximation
\[
    F \lra T_kF
\]
appearing in the Weiss tower above. See \cite[Remark~5.2]{HahnYuan} for details. It will also be useful to know that $T_kF$ can be constructed as the colimit of 
\[
    F \overset{\rho}{\lra} \tau_kF \overset{\tau_k(\rho)}{\lra} \tau_k^2F \overset{\tau_k^2(\rho)}{\lra} \cdots
\]
where $\rho$ is the natural map of \cref{definition:polynomial}, see \cite[Theorem~6.3]{Weiss}. The layers $\mathrm{fib}(T_kF \to T_{k-1}F)$ have a special property which we recall:
\begin{definition}[{\cite[Definition~7.1 and Theorem~7.3]{Weiss}}]
A unitary functor $F \colon \catJ \to \catSp$ is \emph{homogeneous of degree $k$} if it is $k$-polynomial and $T_{k-1}F(V) \simeq 0$ for all $V \in \catJ$. In that case, there exists a spectrum $\Theta \in \catSp^{BU(k)}$ with an action of the unitary group $U(k)$ such that 
\[
    F(V) = (\Theta \otimes \bS^{kV})_{hU(k)} = \Theta \otimes_{U(k)} \bS^{kV}.
\]
Conversely any functor of this form is $k$-homogeneous.
\end{definition}

The following result, combined with the numerical range in the main theorem of \cite{Aumonier}, was the main impetus for this work:
\begin{proposition}[{\cite[Lemma~e.7]{Weiss} and \cite[Lemma~9.6]{Taggart}}]\label{proposition:numerical-observation}
Let $F \to G$ be a natural transformation of unitary functors. Suppose that there exist two constants $c \in \bZ$ and $d \in \bN$ such that $F(V) \to G(V)$ is $(2(d+1)\dim_\bC(V) + c)$-connected for any $V \in \catJ$. Then $T_dF \to T_dG$ is a levelwise equivalence. In particular $T_kF \simeq T_kG$ and $\Theta_kF \simeq \Theta_kG$ for all $k \leq d$.
\end{proposition}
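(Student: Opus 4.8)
The plan is to reduce to a statement about a single highly connected functor, and then track connectivity through the explicit model of the polynomial approximation recalled above. Since $T_d$ is an iterated homotopy limit over topological posets followed by a filtered colimit, it is an exact endofunctor of $\catSp^\catJ$; applying it to $F \to G$ yields a fibre sequence $T_dC \to T_dF \to T_dG$ where $C = \mathrm{fib}(F \to G)$ and, by hypothesis, $C(V)$ is $(2(d+1)\dim_\bC V + c')$-connected for all $V \in \catJ$ and some $c' \in \bZ$. Thus it suffices to prove that \emph{if $C\colon \catJ \to \catSp$ is such that $C(V)$ is $(2(d+1)\dim_\bC V + a)$-connected for all $V$ and some $a \in \bZ$, then $T_dC \simeq 0$}; for then $T_dF \to T_dG$ has contractible fibre at every $V$, hence is a levelwise equivalence.

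For this I would use the model $T_dC = \colim\bigl(C \xrightarrow{\rho} \tau_dC \to \tau_d^2C \to \cdots\bigr)$. A filtered colimit of spectra whose connectivities tend to $\infty$ is contractible, so it is enough to show that $\tau_d$ \emph{strictly improves} the estimate: there is $\varepsilon \geq 1$ with the property that whenever $C(V)$ is $(2(d+1)\dim_\bC V + a)$-connected for all $V$, $\tau_dC(V)$ is $(2(d+1)\dim_\bC V + a + \varepsilon)$-connected for all $V$. Iterating, $\tau_d^jC(V)$ is $(2(d+1)\dim_\bC V + a + j\varepsilon)$-connected, so $T_dC(V) \simeq 0$. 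The ``in particular'' then follows formally: a $k$-polynomial functor is $d$-polynomial for every $d \geq k$ (\cite{Weiss}), so $\mathsf{Poly}^{\leq k}(\catSp^\catJ) \subseteq \mathsf{Poly}^{\leq d}(\catSp^\catJ)$ as reflective subcategories and hence $T_k \circ T_d \simeq T_k$ for $k \leq d$; thus $T_kF \simeq T_kT_dF \simeq T_kT_dG \simeq T_kG$, and since the $k$th layer $\mathrm{fib}(T_kF \to T_{k-1}F)$ --- and the $U(k)$-spectrum $\Theta_kF$ it corresponds to under the equivalence between $k$-homogeneous functors and $U(k)$-spectra --- depend functorially on this data, also $\Theta_kF \simeq \Theta_kG$.

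It remains to establish the improvement for $\tau_d$, which is the technical core. Here $\tau_dC(V) = \lim_{0 \neq W \subseteq \bC^{d+1}} C(V \oplus W)$, a homotopy limit over the topological poset $\mathcal P$ of nonzero subspaces of $\bC^{d+1}$. I would model this limit as the totalisation of its Bousfield--Kan cosimplicial replacement, whose value in simplicial degree $s$ is the spectrum of sections, over the space of composable $s$-chains $W_0 \subseteq \cdots \subseteq W_s$ in $\mathcal P$, of the parametrised spectrum $C(V \oplus W_s)$, and then estimate connectivity one chain-stratum at a time. On the stratum of non-degenerate $s$-chains of dimension vector $j_0 < \cdots < j_s$ --- a partial flag manifold in $\bC^{d+1}$ of real dimension $2\sum_{i=0}^{s} j_i(j_{i+1} - j_i)$ with the convention $j_{s+1} := d+1$ --- the value $C(V \oplus W_s)$ is $\bigl(2(d+1)(\dim_\bC V + j_s) + a\bigr)$-connected, so its contribution to the totalisation is at least $\bigl(2(d+1)\dim_\bC V + a + 2(d+1)j_s - 2\sum_{i=0}^{s} j_i(j_{i+1} - j_i) - s\bigr)$-connected. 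A short computation rewrites the correction term as $2j_s^2 - 2\sum_{i=0}^{s-1} j_i(j_{i+1} - j_i) - s$ and, using $j_i \geq i+1$ (strictly increasing positive integers) together with $\sum_{i=0}^{s-1} j_i(j_{i+1} - j_i) \leq (j_s - 1)^2$, bounds it below by $4j_s - 2 - s \geq 3s + 2 \geq 2$. Since $\mathcal P$ has no non-degenerate chain of length exceeding $d$, the totalisation is a \emph{finite} limit of these contributions, and assembling them shows $\tau_dC(V)$ is $\bigl(2(d+1)\dim_\bC V + a + 2\bigr)$-connected, so one may take $\varepsilon = 2$.

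I expect the main obstacle to be exactly this last step: expressing the homotopy limit over the \emph{topological} poset $\mathcal P$ in a form where connectivity can be controlled stratum-by-stratum --- this is essentially the content of \cite[Proof of Lemma~e.3]{WeissErratum}, and of \cite[Lemma~9.6]{Taggart} in the unitary setting --- and verifying the flag-manifold dimension inequality above, which is what makes the extra connectivity of $C$ on larger $W$ outweigh the dimension of the space of such $W$. The slope $2(d+1)$ in the hypothesis is precisely what makes this compensation succeed on the top-dimensional strata, so the estimate --- and hence the conclusion --- is sharp for $T_d$.
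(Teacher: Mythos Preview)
The paper does not give its own proof of this proposition: it is quoted from the literature with references to \cite[Lemma~e.7]{Weiss} and \cite[Lemma~9.6]{Taggart}, and no argument appears in the text. Your proposal is therefore not to be compared against anything in the paper, but against those cited sources --- and it is essentially a correct reconstruction of the standard argument found there. The reduction to a single highly connected functor $C$ via exactness of $T_d$, followed by showing that $\tau_d$ strictly improves the connectivity bound so that $T_dC = \colim_j \tau_d^j C \simeq 0$, is exactly Weiss's strategy. Your stratum-by-stratum analysis of the cosimplicial model for the homotopy limit over the topological poset of nonzero subspaces, and the flag-manifold dimension count, are the unitary analogue of what is carried out in the erratum and in Taggart's paper; your choice to evaluate at $W_s$ (the terminal object of the chain) is the correct Bousfield--Kan convention for a homotopy limit, and the inequality $\sum_{i=0}^{s-1} j_i(j_{i+1}-j_i) \leq \binom{j_s}{2} \leq (j_s-1)^2$ (the left-hand side being the complex dimension of a partial flag variety in $\bC^{j_s}$) justifies your bound. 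The deduction of $T_kF \simeq T_kG$ and $\Theta_kF \simeq \Theta_kG$ for $k \leq d$ from $T_k \circ T_d \simeq T_k$ is also standard and correct.
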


\subsection{Shifts and colimits}

It will be sometimes convenient to shift unitary functors by precomposing with the endofunctor $\bC^N \oplus -$ of $\catJ$. We recall the effect on the Weiss tower.
\begin{lemma}\label{lemma:shifted-tower}
Let $F \colon \catJ \to \catSp$ be a unitary functor. Let $G = F(\bC^N \oplus -)$ be the shift of $F$. Then the Weiss tower of $G$ is obtained from that of $F$ via the formula:
\[
    T_kG(V) \simeq T_kF(\bC^N \oplus V) \quad \forall \ V \in \catJ.
\]
In particular, the derivatives are given by
\[
    \Theta_k G \simeq \bS^{k\bC^N} \otimes \Theta_k F
\]
as spectra with a $U(k)$-action.
\end{lemma}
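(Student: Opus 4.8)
The plan is to reduce the statement to a single elementary observation — that precomposition with the endofunctor $\bC^N \oplus -$ of $\catJ$ commutes with the functor $\tau_k$ of \cref{definition:polynomial} — and then to bootstrap it through the colimit construction of $T_k$ and the classification of homogeneous functors.

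I would begin with this key identity. For any unitary functor $H$ and any $V \in \catJ$, associativity of $\oplus$ gives
\[
    \tau_k\bigl(H(\bC^N \oplus -)\bigr)(V) = \lim_{0 \neq W \subset \bC^{k+1}} H\bigl((\bC^N \oplus V) \oplus W\bigr) = (\tau_k H)(\bC^N \oplus V),
\]
and this identification is visibly natural in $V$ and compatible with the natural map $\rho$ of \cref{definition:polynomial}. (As an aside, this already shows that shifting preserves $k$-polynomiality.) Iterating, precomposition with $\bC^N \oplus -$ commutes with every power $\tau_k^j$; and since colimits in $\catSp^\catJ$ are computed pointwise, it commutes with filtered colimits. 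Combining these with the colimit description $T_k H \simeq \colim_j \tau_k^j H$ recalled after \cref{definition:polynomial} gives
\[
    T_k G(V) = \colim_j \tau_k^j\bigl(F(\bC^N \oplus -)\bigr)(V) = \colim_j (\tau_k^j F)(\bC^N \oplus V) = T_k F(\bC^N \oplus V),
\]
naturally in $V$ and compatibly with the tower maps $T_kG \to T_{k-1}G$; so the entire Weiss tower of $G$ is that of $F$ reindexed along $\bC^N \oplus -$.

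For the derivatives, it then follows that the $k$th layer of $G$ at $V$ is the $k$th layer of $F$ at $\bC^N \oplus V$, namely $\Theta_k F \otimes_{U(k)} \bS^{k(\bC^N \oplus V)}$. Using the identification $k\cdot(\bC^N \oplus V) = k\bC^N \oplus kV$ of $U(k)$-representations (with $U(k)$ acting through the $\bC^k$-factor throughout) together with $\bS^{k\bC^N \oplus kV} \simeq \bS^{k\bC^N} \otimes \bS^{kV}$, this layer rewrites as $\bigl(\bS^{k\bC^N} \otimes \Theta_k F\bigr) \otimes_{U(k)} \bS^{kV}$, where $U(k)$ acts diagonally on $\bS^{k\bC^N} \otimes \Theta_k F$. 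Comparing with the $k$th layer $\Theta_k G \otimes_{U(k)} \bS^{kV}$ of $G$ and invoking the classification of $k$-homogeneous functors recalled above — which determines the classifying $U(k)$-spectrum of a homogeneous functor up to equivalence — yields $\Theta_k G \simeq \bS^{k\bC^N} \otimes \Theta_k F$ as spectra with $U(k)$-action.

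There is no deep obstacle here — the argument is entirely formal — but the point demanding care is naturality: one must make sure the identifications above respect all the relevant structure maps (the maps $\rho$, the colimit transitions, and the layer cofibre sequences) so that one genuinely obtains an equivalence of towers, and of homogeneous layers as $U(k)$-equivariant functors, rather than just levelwise equivalences of individual spectra.
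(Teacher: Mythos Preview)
Your proof is correct and follows essentially the same approach as the paper: both establish $\tau_k G(V) = \tau_k F(\bC^N \oplus V)$ directly from the definition, iterate to $\tau_k^j$, pass to the colimit to obtain $T_kG(V) \simeq T_kF(\bC^N \oplus V)$, and then read off the derivative from the layer formula $\Theta_kF \otimes_{U(k)} \bS^{k(\bC^N \oplus V)} \simeq (\bS^{k\bC^N} \otimes \Theta_kF) \otimes_{U(k)} \bS^{kV}$. Your explicit appeal to the classification of homogeneous functors at the end is slightly more careful than the paper, which simply reads off the answer, but this is a matter of presentation rather than substance.
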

\begin{proof}
Recall that the $k$th polynomial approximation of $F$ can be constructed as the sequential colimit
\[
    T_kF \simeq \colim(\tau_kF \lra \tau_k\tau_kF \lra \ldots)
\]
where
\[
    \tau_kF(V) = \lim_{0 \neq W \subset \bC^{k+1}} F(V \oplus W).
\]
We directly see that $\tau_kG(V) = \tau_kF(\bC^N \oplus V)$. By induction, the same holds for the $j$th composition $\tau_k^j$ and therefore the first claim follows after passing to the colimit. To identify the derivatives, we write the $k$th homogeneous layer of $G$:
\begin{align*}
    D_kG(V) &= \mathrm{fib}(T_kG(V) \lra T_{k-1}G(V)) \\
        &\simeq \mathrm{fib}(T_kF(\bC^N \oplus V) \lra T_{k-1}F(\bC^N \oplus V)) \\
        &\simeq \Theta_kF \otimes_{U(k)} \bS^{k(\bC^N \oplus V)} \\
        &\simeq (\Theta_kF \otimes \bS^{k\bC^N}) \otimes_{U(k)} \bS^{kV}. \qedhere
\end{align*}
\end{proof}

The following consequence of working with the stable category of spectra will be useful as well.
\begin{lemma}\label{lemma:polynomial-colimit-is-polynomial}
Let $F = \colim_{i \in I} F_i$ be a colimit of unitary functors to spectra which are $k$-polynomial. Then $F$ is also $k$-polynomial.
\end{lemma}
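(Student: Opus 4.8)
The plan is to reduce the assertion to the single fact that the endofunctor $\tau_k$ of $\catSp^\catJ$ appearing in \cref{definition:polynomial} commutes with all colimits. Recall that, by definition, a functor $F$ is $k$-polynomial precisely when the natural map $\rho_F \colon F \to \tau_k F$ is an equivalence, where $\tau_k F(V) = \lim_{0 \neq W \subset \bC^{k+1}} F(V \oplus W)$. Granting that $\tau_k$ preserves colimits, the lemma follows immediately: writing $F = \colim_i F_i$, naturality of $\rho$ identifies $\rho_F$ with the composite
$F = \colim_i F_i \xrightarrow{\ \colim_i \rho_{F_i}\ } \colim_i \tau_k F_i \xrightarrow{\ \sim\ } \tau_k(\colim_i F_i) = \tau_k F$,
in which the second map is an equivalence since $\tau_k$ commutes with colimits, and the first is an equivalence because each $\rho_{F_i}$ is (the $F_i$ being $k$-polynomial) and colimits preserve equivalences. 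Hence $\rho_F$ is an equivalence, i.e.\ $F$ is $k$-polynomial. (One might hope to shortcut this by invoking that $T_k$ is a left adjoint and so preserves colimits, but that only computes the colimit \emph{inside} the subcategory of $k$-polynomial functors, which is a priori not the colimit in $\catSp^\catJ$; so a direct argument with $\tau_k$ seems unavoidable.)

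It remains to show that $\tau_k$ commutes with colimits, and here is where I would do the actual work. First I would reduce to a statement purely about $\catSp$: colimits in $\catSp^\catJ$, evaluation at any object $V \in \catJ$, and the formation of the parameterised spectrum $W \mapsto F(V \oplus W)$ over the topological poset of non-zero subspaces of $\bC^{k+1}$ are all computed objectwise in $\catSp$, so it suffices to prove that the homotopy limit over this fixed topological poset, regarded as a functor $\catSp$-valued diagrams $\to \catSp$, commutes with colimits. This is the step I expect to be the main obstacle, and it is exactly where stability is used. Using the explicit construction of this homotopy limit recorded in \cite[proof of Lemma~e.3]{WeissErratum}, one sees that it is a \emph{finite} homotopy limit: the indexing poset has chains of length at most $k+1$ and its spaces of objects and morphisms are compact (finite disjoint unions of Grassmannians and partial flag varieties), so $\tau_k F(V)$ is built from the spectra $F(V \oplus W)$ by finitely many fibre sequences. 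In the stable $\infty$-category $\catSp$ a finite homotopy limit is also a finite homotopy colimit, and finite colimits commute with arbitrary colimits; therefore $\tau_k$ preserves all colimits, as needed.

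Finally I would add a remark explaining why taking values in spectra is essential here, since this is precisely the point of recording the lemma as a consequence of working stably. For functors valued in the $\infty$-category of spaces the analogous statement fails: there $\tau_k$ is a genuine non-trivial finite homotopy limit, which commutes with filtered colimits but not with general colimits (e.g.\ not with pushouts). It is only the identification of finite limits with finite colimits in $\catSp$ that upgrades the commutation to arbitrary colimits, and hence makes the lemma true.
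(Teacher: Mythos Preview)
Your argument is correct and is essentially the paper's own proof, just unpacked in more detail: the paper notes in one sentence that the limit defining $\tau_k$ is finite because the indexing topological poset is compact, and hence commutes with colimits in the stable category $\catSp$. Your additional commentary on why the left-adjoint property of $T_k$ does not immediately suffice, and on the failure for space-valued functors, is accurate but goes beyond what the paper records.
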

\begin{proof}
By definition $F$ is $k$-polynomial if the natural map
\[
    F(V) \lra \lim_{0 \neq W \subset \bC^{k+1}} F(U \oplus W)
\]
is an equivalence. The limit is finite in the $\infty$-categorical sense because the topological poset is compact, hence it commutes with colimits as $\catSp$ is a stable $\infty$-category.
\end{proof}

\subsection{Parameterised unitary calculus}\label{subsection:parameterised-unitary-calculus}

In our arguments later we will encounter unitary functors that are naturally expressed as the (suspension spectra of) total spaces of bundles of unitary functors. It will thus be convenient to study such functors fibrewise over their base $B$, i.e. first as functors $\catJ \to \catSp^B$ and then post-compose with the colimit functor $\catSp^B \to \catSp$. Unitary calculus for such parameterised unitary functors $\catJ \to \catSp^B$ works as in the unparameterised setting recalled above. We shall not dwell on the technicalities: we will only need a few basic observations and the definition of polynomiality which is identical to that recalled in \cref{definition:polynomial}. 

In this subsection, let $B$ be a space and $F \colon \catJ \to \catSp^B$ a unitary functor. For any point $i_b \colon \{b\} \hookrightarrow B$, we denote by $F_b = i_b^*F$ the restriction of $F$ to $b$. It is a unitary functor $F_b \colon \catJ \to \catSp$ because $i_b^* \colon \catSp^B \to \catSp$ is a functor.

\begin{lemma}\label{lemma:fibrewise-polynomiality}
The functor $F$ is $k$-polynomial if and only if $F_b$ is $k$-polynomial for every $b \in K$.
\end{lemma}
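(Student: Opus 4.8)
The plan is to deduce everything from two structural facts about the functor category $\catSp^B = \catFun(B,\catSp)$: limits in it are computed pointwise, and equivalences in it are detected pointwise. Since by \cref{definition:polynomial} ``$k$-polynomial'' is precisely the assertion that one natural map $\rho_F \colon F \to \tau_k F$ is an equivalence, both properties together will let me check this fibre by fibre.

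First I would record that the evaluation functor $i_b^* \colon \catSp^B \to \catSp$ preserves limits. This is immediate since limits of diagrams of functors are computed objectwise; alternatively, $i_b^*$ is restriction along $\{b\} \hookrightarrow B$ and hence a right adjoint (its right adjoint being right Kan extension). Applying this to the limit over the topological poset of non-zero subspaces of $\bC^{k+1}$ that defines $\tau_k$ gives, for every $V \in \catJ$, a natural identification
\[
    i_b^*\bigl(\tau_k F(V)\bigr) \;=\; i_b^*\lim_{0 \neq W \subset \bC^{k+1}} F(V \oplus W) \;\simeq\; \lim_{0 \neq W \subset \bC^{k+1}} F_b(V \oplus W) \;=\; \tau_k F_b(V).
\]
(The limit over the topological poset is well-defined by the reference cited in \cref{definition:polynomial}, and one could also note it is finite as in the proof of \cref{lemma:polynomial-colimit-is-polynomial}, though finiteness is not needed here.) Next I would check that $i_b^*$ intertwines the natural maps: because $i_b^*$ is a functor, it carries $\rho_F \colon F(V) \to \tau_k F(V)$ — which is assembled from the maps $F(V) \to F(V \oplus W)$ obtained by applying $F$ to the isometric inclusions — to the corresponding map $\rho_{F_b} \colon F_b(V) \to \tau_k F_b(V)$, under the identification above. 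This is a purely formal naturality verification.

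Finally I would combine these with the fact that a morphism in $\catSp^B$ is an equivalence if and only if its restriction $i_b^*$ is an equivalence for every $b \in B$. Then $F$ is $k$-polynomial iff $\rho_F$ is an equivalence iff $i_b^*\rho_F \simeq \rho_{F_b}$ is an equivalence for all $b$ iff each $F_b$ is $k$-polynomial, which is the claim. I do not expect a genuine obstacle in this argument: the only point requiring any care is the compatibility of the defining limit $\tau_k$ with evaluation at a point, which is exactly what the first step establishes, and the bookkeeping of the topological poset limit, which is already handled by the references in \cref{definition:polynomial}.
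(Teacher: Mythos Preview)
Your proof is correct and follows essentially the same route as the paper: use that $i_b^*$ commutes with the limit defining $\tau_k$, and then that the family $\{i_b^*\}_{b \in B}$ is jointly conservative. One small slip: in your parenthetical you write that $i_b^*$ is a right adjoint ``(its right adjoint being right Kan extension)'', but having a right adjoint makes $i_b^*$ a \emph{left} adjoint; what you want is that its \emph{left} adjoint is left Kan extension $(i_b)_!$, or simply rely on your primary justification that limits in $\catSp^B$ are computed pointwise.
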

\begin{proof}
Recall that $F$ is $k$-polynomial if the natural morphism
\[
    F(V) \lra \lim_{0 \neq W \subset \bC^{k+1}} F(U \oplus W)
\]
is an equivalence. For every $b \in B$, we have an induced morphism
\[
    F_b(V) \lra i_b^* \left( \lim_{0 \neq W \subset \bC^{k+1}} F(U \oplus W) \right) \simeq \lim_{0 \neq W \subset \bC^{k+1}} F_b(U \oplus W)
\]
where we have used that $i_b^*$ commutes with limits. The ``only if" direction follows directly. The converse follows from the fact that the $i_b^*$ are jointly conservative (in fact, it suffices to verify the assumption for a single $b$ in each connected component of $B$).
\end{proof}

As a consequence, $i_b^*$ restricts to a functor between the categories of $k$-polynomial functors. In fact more is true:
\begin{lemma}
The natural morphism $T_k i_b^*F \to i_b^* T_k F$ is an equivalence.
\end{lemma}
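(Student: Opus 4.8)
**Proof proposal for the statement "The natural morphism $T_k i_b^*F \to i_b^* T_k F$ is an equivalence."**

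The plan is to exploit the explicit colimit construction of $T_k$ recalled after \cref{definition:polynomial}, together with the fact that $i_b^* \colon \catSp^B \to \catSp$ is exact: it preserves all limits and all colimits, being both a left and a right adjoint (it has a right adjoint given by right Kan extension along $\{b\}\hookrightarrow B$, and a left adjoint given by left Kan extension). First I would recall that $T_k G \simeq \colim(\tau_k G \to \tau_k^2 G \to \cdots)$ for any unitary functor $G$, where $\tau_k G(V) = \lim_{0 \neq W \subset \bC^{k+1}} G(V \oplus W)$, as in the proof of \cref{lemma:shifted-tower}. The key computation, already performed inside the proof of \cref{lemma:fibrewise-polynomiality}, is that $i_b^*$ commutes with the finite limit defining $\tau_k$: that is, $i_b^* \tau_k F \simeq \tau_k i_b^* F$ naturally in the functor, because the poset of non-zero subspaces of $\bC^{k+1}$ is compact so the limit is finite in the $\infty$-categorical sense and $i_b^*$ preserves finite limits (indeed all limits).

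Next I would observe that this identification is compatible with the structure map $\rho$ of \cref{definition:polynomial}: applying $i_b^*$ to $\rho \colon F \to \tau_k F$ yields, under the equivalence $i_b^*\tau_k F \simeq \tau_k i_b^* F$, exactly the map $\rho \colon i_b^* F \to \tau_k i_b^* F$. By induction on $j$, using that $i_b^*$ commutes with $\tau_k$, one gets a compatible family of equivalences $i_b^* \tau_k^j F \simeq \tau_k^j i_b^* F$ intertwining the maps in the two sequential diagrams $(i_b^*\tau_k^j F)_{j \geq 0}$ and $(\tau_k^j i_b^* F)_{j \geq 0}$. Since $i_b^*$ also commutes with the sequential colimit over $j$ (it preserves colimits), passing to the colimit gives
\[
    i_b^* T_k F \simeq i_b^* \colim_j \tau_k^j F \simeq \colim_j i_b^* \tau_k^j F \simeq \colim_j \tau_k^j i_b^* F \simeq T_k i_b^* F,
\]
and unwinding the construction shows this equivalence is the natural comparison map $T_k i_b^* F \to i_b^* T_k F$ (the map induced by the universal property of $T_k$ applied to $i_b^* F \to i_b^* T_k F$, using that $i_b^* T_k F$ is $k$-polynomial by \cref{lemma:fibrewise-polynomiality}).

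I do not expect any serious obstacle here; the content is entirely formal. The only point requiring a little care is verifying that the comparison map one writes down abstractly — using the universal property of polynomial approximation — agrees with the concretely constructed equivalence coming from the colimit description, rather than merely knowing both sides are abstractly equivalent. This is handled by noting that both maps are the unique map under $i_b^* F$ to the $k$-polynomial functor $i_b^* T_k F$, i.e. one uses that $\mathsf{Poly}^{\leq k}(\catSp^\catJ)$ is a reflective subcategory so $\mathrm{Map}(T_k i_b^* F, -)$ restricted to $k$-polynomial targets computes maps out of $i_b^* F$; the naturality of all the identifications above then forces the two maps to coincide. A second, even more minor, point is to confirm that $i_b^*$ really is exact: this is immediate from the (un)straightening description, since restriction of a functor $B \to \catSp$ along $\{b\} \to B$ is evaluation at $b$, which preserves all limits and colimits as these are computed pointwise in functor $\infty$-categories.
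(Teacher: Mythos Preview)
Your proposal is correct and follows essentially the same approach as the paper: both use the sequential colimit description $T_kF \simeq \colim_j \tau_k^j F$ together with the fact that $i_b^*$ commutes with limits and colimits to obtain $\tau_k i_b^*F \simeq i_b^*\tau_k F$ and then pass to the colimit. Your write-up is more detailed, in particular in verifying that the resulting equivalence is the natural comparison map, but the core argument is identical.
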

\begin{proof}
This follows from the construction of the Taylor approximation as a sequential colimit
\[
    T_k F \simeq \colim(\tau_kF \lra \tau_k^2F \lra \tau_k^3 F \lra \ldots)
\]
and the fact that $i_b^*$ commutes with colimits and limits, so that $\tau_k i_b^*F \simeq i_b^* \tau_k F$.
\end{proof}

Let us now investigate the behaviour of the Weiss tower when applying the colimit functor $\catSp^B \to \catSp$. 
\begin{lemma}
If $F$ is $k$-polynomial then $\colim_B F$ is $k$-polynomial.
\end{lemma}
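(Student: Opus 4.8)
The plan is to reduce the statement to the fact that $\colim_B \colon \catSp^B \to \catSp$ commutes with the finite limit appearing in \cref{definition:polynomial}, in the same spirit as the proof of \cref{lemma:polynomial-colimit-is-polynomial}. First I would recall that $\colim_B$ is the left adjoint of the constant-diagram functor $\catSp \to \catSp^B$ (restriction along $B \to \ast$), and therefore preserves all colimits. Since $\catSp^B$ and $\catSp$ are both stable $\infty$-categories, a colimit-preserving functor between them is exact, and in particular preserves finite limits.

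Next I would invoke the observation already used in \cref{lemma:polynomial-colimit-is-polynomial} (and justified in \cite[Proof of Lemma~e.3]{WeissErratum}) that the limit over the topological poset of non-zero subspaces of $\bC^{k+1}$ is finite in the $\infty$-categorical sense, the poset being compact. Applying $\colim_B$ to the natural equivalence $F(V) \lra \lim_{0 \neq W \subset \bC^{k+1}} F(V \oplus W)$ and commuting the colimit past this finite limit then gives
\[
    (\colim_B F)(V) \simeq \colim_B \lim_{0 \neq W \subset \bC^{k+1}} F(V \oplus W) \simeq \lim_{0 \neq W \subset \bC^{k+1}} (\colim_B F)(V \oplus W),
\]
which is exactly the assertion that $\colim_B F$ is $k$-polynomial. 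Note that no separate compatibility with the shift $V \mapsto V \oplus W$ needs to be checked, since $\colim_B$ is applied levelwise in the variable $V$.

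I do not expect any real obstacle here; the only point requiring a little care is to make sure the finiteness of the indexing diagram is genuinely the same one relied on earlier, so I would simply cross-reference \cref{lemma:polynomial-colimit-is-polynomial} and the standard fact that an exact functor of stable $\infty$-categories preserves finite limits (equivalently, finite colimits).
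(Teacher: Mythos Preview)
Your proposal is correct and follows essentially the same approach as the paper: both reduce to commuting $\colim_B$ past the finite limit defining $\tau_k$, invoking stability to justify this. Your version is slightly more explicit in noting that $\colim_B$ is exact because it is a colimit-preserving functor between stable $\infty$-categories, whereas the paper simply asserts that finite limits commute with colimits in $\catSp$ by stability; these are two phrasings of the same fact.
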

\begin{proof}
It follows from the definition of polynomiality and the fact that
\[
    \colim_{b \in B} \lim_{0 \neq W \subset \bC^{k+1}} F_b(U \oplus W) \simeq \lim_{0 \neq W \subset \bC^{k+1}} \colim_{b \in B} F_b(U \oplus W)
\]
because the limit is finite in the $\infty$-categorical sense and thus commutes with the colimit as $\catSp$ is stable.
\end{proof}

\begin{lemma}\label{lemma:parameterised-colimit-tower}
The natural morphism
\[
    T_k (\colim_{b \in B} F_b) \lra \colim_{b \in B} T_kF_b
\]
is an equivalence.
\end{lemma}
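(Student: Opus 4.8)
The plan is to reduce the statement to two facts already in hand: that $T_k$ is a left adjoint, and that a colimit of $k$-polynomial functors, computed levelwise in $\catSp^\catJ$, is again $k$-polynomial (\cref{lemma:polynomial-colimit-is-polynomial}). First I would pin down the natural morphism being asserted. For each $b \in B$ the unit $F_b \to T_k F_b$ of the polynomial approximation assembles into a map $\colim_{b \in B} F_b \to \colim_{b \in B} T_k F_b$; the target is $k$-polynomial by \cref{lemma:polynomial-colimit-is-polynomial}, so by the universal property of the $k$th polynomial approximation this map factors uniquely through a comparison $T_k(\colim_b F_b) \to \colim_b T_k F_b$, and this is the morphism to be shown an equivalence.

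To see it is an equivalence, recall that the inclusion $\mathsf{Poly}^{\leq k}(\catSp^\catJ) \hookrightarrow \catSp^\catJ$ admits $T_k$ as a left adjoint, so $T_k$ preserves all colimits; in particular $T_k(\colim_b F_b)$ is the colimit of the diagram $b \mapsto T_k F_b$ computed \emph{inside} $\mathsf{Poly}^{\leq k}(\catSp^\catJ)$. On the other hand, \cref{lemma:polynomial-colimit-is-polynomial} says precisely that $\mathsf{Poly}^{\leq k}(\catSp^\catJ)$ is closed under colimits taken in $\catSp^\catJ$, so colimits of polynomial functors agree whether formed inside $\mathsf{Poly}^{\leq k}$ or levelwise in $\catSp^\catJ$. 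Hence the colimit of $b \mapsto T_k F_b$ in $\mathsf{Poly}^{\leq k}$ is just $\colim_b T_k F_b$, and chasing the identifications shows the resulting equivalence is the canonical comparison map above.

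Alternatively, and more in keeping with the hands-on style of this subsection, one can avoid adjoints and argue directly with the model $T_k F \simeq \colim_j \tau_k^j F$. The key point is that $\tau_k$ commutes with the colimit over $B$: for any $V$,
\[
    \tau_k\!\left(\colim_{b \in B} F_b\right)(V) = \lim_{0 \neq W \subset \bC^{k+1}} \colim_{b \in B} F_b(V \oplus W) \simeq \colim_{b \in B} \lim_{0 \neq W \subset \bC^{k+1}} F_b(V \oplus W) = \colim_{b \in B} \tau_k F_b(V),
\]
where the middle equivalence uses that the indexing poset is compact, so the limit is finite in the $\infty$-categorical sense and commutes with colimits in the stable category $\catSp$, exactly as in the proof of \cref{lemma:polynomial-colimit-is-polynomial}. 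Iterating gives $\tau_k^j(\colim_b F_b) \simeq \colim_b \tau_k^j F_b$ for all $j$, and then commuting the sequential colimit over $j$ past the colimit over $B$ yields the claim. Either way there is no genuine obstacle; the only point requiring care is the one flagged in the second paragraph — that colimits of $k$-polynomial functors inside $\mathsf{Poly}^{\leq k}(\catSp^\catJ)$ are computed levelwise — which is exactly what turns the comparison into an equivalence rather than a mere map to a polynomial approximation.
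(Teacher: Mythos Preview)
Your proposal is correct, and your ``alternative'' argument is precisely the paper's proof: the paper simply says that $T_k$ is built from iterated $\tau_k$'s and a sequential colimit, and that the finite limit defining $\tau_k$ commutes with the colimit over $B$ by stability. Your first argument via the left adjoint $T_k$ together with \cref{lemma:polynomial-colimit-is-polynomial} is a valid and slightly more conceptual variant that the paper does not spell out, but it amounts to the same observation packaged differently.
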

\begin{proof}
Again, this follows from the construction of the Taylor approximation and commuting a colimit with another colimit and a finite limit.
\end{proof}

\begin{lemma}\label{lemma:parameterised-colim-derivatives}
The derivative of the colimit is the colimit of the derivatives, i.e.
\[
    \Theta_k (\colim_{b \in B} F_b) \simeq \colim_{b \in B} \Theta_k F_b
\]
\end{lemma}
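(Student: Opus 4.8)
The plan is to deduce this from the classification of homogeneous functors together with \cref{lemma:parameterised-colimit-tower}. Recall that the $U(k)$-spectrum $\Theta_k F$ is extracted from the $k$th layer of the Weiss tower, the $k$-homogeneous functor
\[
    D_k F(-) = \mathrm{fib}\bigl(T_k F(-) \lra T_{k-1} F(-)\bigr) \simeq \Theta_k F \otimes_{U(k)} \bS^{k(-)},
\]
and that, as recalled above, the assignment $\Theta \mapsto \Theta \otimes_{U(k)} \bS^{k(-)}$ is an equivalence from $\catSp^{BU(k)}$ onto the $\infty$-category of $k$-homogeneous unitary functors, with inverse $\Theta_k$.

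First I would apply \cref{lemma:parameterised-colimit-tower} for the indices $k$ and $k-1$. Since the polynomial approximations and the maps between them are functorial in $F$, the square comparing $T_k(\colim_b F_b) \to T_{k-1}(\colim_b F_b)$ with $\colim_b\bigl(T_k F_b \to T_{k-1} F_b\bigr)$ commutes; taking vertical fibres and using that finite limits commute with colimits in the stable $\infty$-category $\catSp$ yields
\[
    D_k\bigl(\colim_{b \in B} F_b\bigr) \simeq \colim_{b \in B} D_k F_b .
\]
Now I rewrite both sides. The left-hand side is $\Theta_k\bigl(\colim_b F_b\bigr) \otimes_{U(k)} \bS^{k(-)}$ by definition. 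For the right-hand side, $D_k F_b \simeq \Theta_k F_b \otimes_{U(k)} \bS^{k(-)}$, and since $- \otimes_{U(k)} \bS^{kV}$ is a colimit construction (tensoring with the fixed spectrum $\bS^{kV}$ followed by homotopy $U(k)$-orbits) it commutes with the colimit over $B$, so $\colim_b D_k F_b \simeq \bigl(\colim_b \Theta_k F_b\bigr) \otimes_{U(k)} \bS^{k(-)}$. Therefore $\Theta_k(\colim_b F_b) \otimes_{U(k)} \bS^{k(-)}$ and $\bigl(\colim_b \Theta_k F_b\bigr) \otimes_{U(k)} \bS^{k(-)}$ are equivalent as $k$-homogeneous functors, and applying the inverse equivalence $\Theta_k$ identifies $\Theta_k(\colim_b F_b) \simeq \colim_b \Theta_k F_b$ in $\catSp^{BU(k)}$.

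The only point requiring care is that all of these equivalences are \emph{natural} — in particular that $D_k\bigl(\colim_b F_b\bigr) \simeq \colim_b D_k F_b$ refines to an equivalence of $k$-homogeneous functors compatible with the $U(k)$-structure, so that the classification of homogeneous functors may be invoked as an equivalence of $\infty$-categories rather than merely a levelwise statement. This is automatic from the functoriality of the Weiss tower construction and of \cref{lemma:parameterised-colimit-tower}, but it is the step one should not skip. An alternative route avoiding the classification altogether is to observe that $\Theta_k$ is itself a composite of homotopy-colimit-preserving operations (a stabilised $k$th cross-effect followed by a further sequential colimit), hence commutes with colimits directly; the layer-wise argument above is nonetheless the one fitting the pattern of the preceding lemmas.
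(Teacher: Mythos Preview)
Your proof is correct and follows essentially the same route as the paper. The paper's proof is extremely terse: it observes that the $k$th layer is a finite limit (the fibre of $T_kF \to T_{k-1}F$), hence commutes with the colimit over $B$, and then says ``the result follows immediately.'' You have spelled out that last step — passing from the equivalence of layers $D_k(\colim_b F_b) \simeq \colim_b D_k F_b$ to the equivalence of derivatives via the classification of homogeneous functors and the fact that $- \otimes_{U(k)} \bS^{k(-)}$ commutes with colimits — which the paper leaves implicit.
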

\begin{proof}
Recall that the $k$th layer functor is the fibre $\mathrm{fib}(T_kF \to T_{k-1}F)$. It is a finite limit hence commutes with the colimit over $B$. The result follows immediately.
\end{proof}

\section{Polynomiality of holomorphic maps}\label{section:polynomiality}

Let $X$ be a smooth connected complex projective variety. Any embedding $V \hookrightarrow W$ of complex vector spaces gives rise to an embedding $\bP(V) \hookrightarrow \bP(W)$ between the projectivisations. We thus obtain a unitary functor
\begin{align*}
    \catJ &\lra \catSp \\
    V &\longmapsto \Sigma^\infty_+ \Hol(X, \bP(V)),
\end{align*}
and the goal of this section is to study its Weiss tower. A holomorphic map $f \colon X \to \bP(V)$ of degree $\alpha = f^*(h) \in H^2(X;\bZ)$ is given by the data of a line bundle $\cL$ with $c_1(\cL) = \alpha$ together with a non-vanishing section of $\cL \otimes V$, up to equivalence. More precisely there is a map
\[
    \Hol_\alpha(X, \bP(V)) \lra \Pic^\alpha(X)
\]
from the space of holomorphic maps of degree $\alpha$ to the Picard variety of isomorphism classes of line bundles with first Chern class $\alpha$. The fibre above $[\cL] \in \Pic^\alpha(X)$ is the space $\Gammahol(\cL \otimes V - 0) / \bC^\times$ of never-vanishing holomorphic sections modulo the scalar action. Guided by this, we will first study the unitary functor
\[
    V \longmapsto \Sigma^\infty_+ \Gammahol(\cL \otimes V - 0)
\]
and then explain in \cref{subsection:polynomiality-full-mapping-space} how to deduce the result for the space of holomorphic maps. Functoriality is given by extending a section by zero.

\bigskip

Our main idea is to identify the space $\Gammahol(\cL \otimes V - 0)$ with a space of linear maps subject to conditions on their kernels. Related identifications, although different, appear in \cite{Gomezgonzales,,Crawford}. We shall then study an associated stratification by rank, and deduce our main results inductively using Miller's stable splitting \cite{Miller}. We will assume that $\cL$ is very ample, and let
\begin{equation}\label{equation:ample-embedding}
\begin{split}
    \iota \colon X &\longhookrightarrow \bP(H^0(X,\cL)^\vee) =  (H^0(X,\cL)^\vee \setminus 0) / \bC^\times \\
    x &\longmapsto \mathrm{ev}_x
\end{split}
\end{equation}
be the natural embedding and write $\iota X$ for its image. There are isomorphisms of vector spaces
\[
    \Gammahol(\cL \otimes V) = H^0(X, \cL \otimes V) \cong H^0(X, \cL) \otimes V \cong \Hom(H^0(X, \cL)^\vee, V)
\]
that are natural in $V$. We can thus see any holomorphic section of $\cL \otimes V$ as a linear map $H^0(X, \cL)^\vee \to V$. The fact that the section vanishes nowhere on $X$ translates to a condition on the kernel of that linear map. More precisely:
\begin{equation}\label{equation:correspondence-sections-linear-maps}
    \Gammahol(\cL \otimes V - 0) \cong \set{A \in \Hom(H^0(X, \cL)^\vee, V)}{\bP(\ker A) \cap \iota X = \emptyset}.
\end{equation}
This is the point of view that we will adopt in the remainder of this section.

\subsection{The rank filtration on linear maps}
We recall some basic linear algebra and its consequences on manifolds of linear maps of bounded-below rank. Let $V, W$ be two inner product complex vector spaces of dimensions $n$ and $m$ respectively. By lower semicontinuity of the rank function, for any $0 \leq r \leq \min(n,m)$ the subspace
\[
    \Hom^{\geq r}(V,W) := \set{A \in \Hom(V,W)}{\rank A \geq r} \subset \Hom(V,W)
\]
is open. Furthermore, the subspace
\[
    \Hom^r(V,W) := \set{A \in \Hom(V,W)}{\rank A = r} \subset \Hom(V,W)
\]
is a smooth submanifold of complex codimension $(n-r)(m-r)$, see e.g. \cite[Exercise~13 p.~27]{GuilleminPollack}. Let us also recall what its normal bundle is:
\begin{lemma}\label{lemma:normal-bundle-rank-matrices}
Let $\ker \colon \Hom^r(V,W) \to \Grass(n-r,V)$ and $\image \colon \Hom^r(V,W) \to \Grass(r,W)$ be the kernel and image maps. Let $\cH$ be the hyperplane bundle on $\Grass(n-r,V)$ and $\cQ$ be the quotient bundle on $\Grass(r,W)$. Then the normal bundle of $\Hom^r(V,W) \subset \Hom(V,W)$ is given by the vector bundle $\underline{\Hom}(\ker^* \cH, \image^* \cQ)$.
\end{lemma}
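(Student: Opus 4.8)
The plan is to compute the normal bundle one fibre at a time and then recognise the answer as a pull-back of standard bundles. Fix $A \in \Hom^r(V,W)$. Since $\Hom(V,W)$ is a vector space its tangent bundle is canonically trivial, so the normal space of $\Hom^r(V,W)$ at $A$ is the quotient $\Hom(V,W)/T_A\Hom^r(V,W)$, and the real task is to identify $T_A\Hom^r(V,W)$ inside $\Hom(V,W)$. I claim that
\[
    T_A\Hom^r(V,W) = \ker\left( \rho_A \colon \Hom(V,W) \lra \Hom(\ker A, W/\image A) \right),
\]
where $\rho_A(B)$ is the composite $\ker A \hookrightarrow V \xrightarrow{B} W \twoheadrightarrow W/\image A$.

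\emph{Proof of the claim.} The inclusion ``$\subseteq$'' is a differentiation. Given a smooth path $t \mapsto A_t$ in $\Hom^r(V,W)$ with $A_0 = A$ and a vector $v \in \ker A$, choose — locally, using that the kernels of nearby rank-$r$ maps form a smooth subbundle — a smooth path $v_t$ with $v_t \in \ker A_t$ and $v_0 = v$; differentiating $A_t v_t = 0$ at $t = 0$ gives $\dot{A}_0 v + A \dot{v}_0 = 0$, hence $\dot{A}_0 v \in \image A$, i.e. $\rho_A(\dot{A}_0)(v) = 0$. So every tangent vector lies in $\ker\rho_A$. Now $\rho_A$ is surjective: given $\phi \colon \ker A \to W/\image A$, lift it to $\tilde{\phi} \colon \ker A \to W$ and extend by zero on $(\ker A)^\perp$. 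Hence $\ker\rho_A$ has codimension $\dim\Hom(\ker A, W/\image A) = (n-r)(m-r)$ in $\Hom(V,W)$, which is exactly the codimension of the submanifold $\Hom^r(V,W)$ recalled above; the inclusion is therefore an equality.

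\emph{Globalising.} The kernel and image maps $\ker \colon \Hom^r(V,W) \to \Grass(n-r,V)$ and $\image \colon \Hom^r(V,W) \to \Grass(r,W)$ are smooth, so $\ker^*\cH$ and $\image^*\cQ$ are vector bundles over $\Hom^r(V,W)$ with fibres $\ker A$ and $W/\image A$ at $A$. The assignment $A \mapsto \rho_A$ is smooth and thus defines a morphism of vector bundles $\rho \colon \underline{\Hom}(V,W) \to \underline{\Hom}(\ker^*\cH, \image^*\cQ)$ over $\Hom^r(V,W)$; by the claim it is fibrewise surjective with kernel $T\Hom^r(V,W)$. A fibrewise-surjective map onto a bundle therefore induces an isomorphism $\underline{\Hom}(V,W)/\ker\rho \xrightarrow{\sim} \underline{\Hom}(\ker^*\cH, \image^*\cQ)$, and since $\ker\rho = T\Hom^r(V,W)$ the left-hand side is the normal bundle:
\[
    \nu\bigl(\Hom^r(V,W) \subset \Hom(V,W)\bigr) \;=\; \underline{\Hom}(V,W)/T\Hom^r(V,W) \;\cong\; \underline{\Hom}(\ker^*\cH, \image^*\cQ).
\]

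The only real content is the tangent-space identification; the rest is formal bundle bookkeeping. I expect the one point needing care to be the dimension count that upgrades $T_A\Hom^r(V,W) \subseteq \ker\rho_A$ to an equality, which is where the cited codimension-$(n-r)(m-r)$ submanifold structure is used. An alternative that sidesteps the count — and reproves that submanifold structure — is to write down the local normal form directly: orthogonally splitting $V = \ker A \oplus (\ker A)^\perp$ and $W = \image A \oplus (\image A)^\perp$, a nearby $B$ has invertible block $(\ker A)^\perp \to \image A$, and its Schur complement exhibits $\Hom^r(V,W)$ locally as the graph of $(B_{11},B_{12},B_{21}) \mapsto B_{22} = B_{21}B_{12}^{-1}B_{11}$, whose normal direction is exactly the $\Hom(\ker A, (\image A)^\perp) \cong \Hom(\ker A, W/\image A)$ block.
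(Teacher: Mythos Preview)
Your proof is correct and follows essentially the same strategy as the paper: define the bundle map $\rho$ from the trivial $\underline{\Hom}(V,W)$ to $\underline{\Hom}(\ker^*\cH,\image^*\cQ)$, show $T\Hom^r(V,W)\subseteq\ker\rho$, and then use the codimension-$(n-r)(m-r)$ fact to upgrade the inclusion to an equality. The only notable variation is in how the inclusion is argued: the paper reasons by contradiction along the \emph{linear} curve $A+t\varphi$ (if $\varphi(v)\notin\image A$ then the rank jumps), whereas you differentiate $A_tv_t=0$ along an arbitrary curve in $\Hom^r(V,W)$ using a smooth lift $v_t\in\ker A_t$. Your version is arguably cleaner since it sidesteps the need to justify that tangent vectors are detected by linear curves. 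The Schur-complement remark at the end is a nice addition not present in the paper.
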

\begin{proof}
The space of linear maps $\Hom(V,W)$ is affine, hence its tangent bundle is the trivial bundle $\underline{\Hom}(\underline{V},\underline{W})$. On its restriction to $\Hom^r(V,W)$ define
\begin{equation}\label{equation:normalmap}
\begin{split}
    \underline{\Hom}(\underline{V},\underline{W})|_{\Hom^r(V,W)} &\lra \underline{\Hom}(\ker^* \cH, \image^* \cQ) \\
    (A \in \Hom^r(V,W), \varphi \in \Hom(V,W)) &\longmapsto (A, \psi \colon \ker A \subset V \overset{\varphi}{\to} W \twoheadrightarrow W / \image A).
\end{split}
\end{equation}
This is visibly a surjective morphism of vector bundles on $\Hom^r(V;W)$. Let us show that its kernel contains the tangent bundle $T\Hom^r(V,W)$. By the definition using tangent curves, we have an identification
\[
    T\Hom^r(V,W) \cong \set{(A,\varphi)}{\rank (A+t\varphi) = r \text{ for } t>0 \text{ small enough}} \subset \underline{\Hom}(\underline{V},\underline{W})|_{\Hom^r(V,W)}.
\]
Let $(A,\varphi) \in T\Hom^r(V,W)$. If $v \in \ker A$, then $\varphi(v) \in \image A$. Indeed, if it were not the case then $\rank(A+t\varphi) \geq r+1$ because $\image(A+t\varphi) \supset \image A \oplus \mathrm{span}(\varphi(v))$ for small $t > 0$. This shows our claim. Now, by counting the dimensions, the kernel of~\eqref{equation:normalmap} is in fact equal to $T\Hom^r(V,W)$. The cokernel is thus the normal bundle.
\end{proof}

\subsection{Polynomiality of holomorphic sections}

To lighten the notation, let us choose an isomorphism $\bC^N \cong H^0(X, \cL)^\vee$, and write
\[
    \Phi(V) = \set{A \in \Hom(\bC^N, V)}{\bP(\ker A) \cap \iota X = \emptyset} \cong \Gammahol(\cL \otimes V - 0).
\]
The goal of this section is to show:
\begin{theorem}\label{theorem:holomorphic-polynomiality}
The unitary functor $\Sigma^\infty_+ \Phi$ is polynomial of degree $N = \dim H^0(X,\cL)$.
\end{theorem}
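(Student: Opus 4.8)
The plan is to assemble $\Sigma^\infty_+\Phi$ out of finitely many unitary functors, each polynomial of degree at most $N$, using the rank stratification on spaces of linear maps; since $N$-polynomial unitary functors are closed under colimits by \cref{lemma:polynomial-colimit-is-polynomial}, hence under cofibres, this will give the theorem.

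First I would record the geometric fact that truncates the stratification. Because $\cL$ is very ample — in particular ample — no linear subspace $\bP(K)\subset\bP(\bC^N)$ disjoint from $\iota X$ can have $\dim_\bC K\geq N-n$: linear projection away from $\bP(K)$ restricts to a morphism on $\iota X$ whose pullback of $\cO(1)$ is $\cL$, so this morphism is finite onto its image, whose dimension is therefore $n$, and comparing with the dimension of the target projective space forces $\dim_\bC K\leq N-n-1$. Hence $\rank A\geq n+1$ for every $A\in\Phi(V)$, and $\Phi(V)$ is filtered by the open submanifolds $\Phi^{\geq r}(V):=\set{A\in\Phi(V)}{\rank A\geq r}$ for $n+1\leq r\leq N$, with locally closed smooth strata $\Phi^r(V):=\Phi^{\geq r}(V)\cap\Hom^r(\bC^N,V)$; this filtration is natural in $V$.

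Next I would run the Pontryagin–Thom construction along the filtration: the closed submanifold $\Phi^r(V)\subset\Phi^{\geq r}(V)$, with open complement $\Phi^{\geq r+1}(V)$, produces a cofibre sequence
\[
\Sigma^\infty_+\Phi^{\geq r+1}(V)\lra\Sigma^\infty_+\Phi^{\geq r}(V)\lra\Sigma^\infty\mathrm{Th}(\nu_r(V))
\]
natural in $V$, where by \cref{lemma:normal-bundle-rank-matrices} the normal bundle is $\nu_r(V)\cong\underline{\Hom}(\ker^*\cH,\image^*\cQ)$. (Miller's technique \cite{Miller}, via the scaling action on linear maps, moreover splits these cofibre sequences, so that $\Sigma^\infty_+\Phi(V)\simeq\bigvee_{r=n+1}^N\Sigma^\infty\mathrm{Th}(\nu_r(V))$; this refinement is what one wants for the explicit description of the tower in later sections, but for polynomiality the bare cofibre sequences suffice.) As there are finitely many steps, it remains to show that each $V\mapsto\Sigma^\infty\mathrm{Th}(\nu_r(V))$, together with the bottom term $\Sigma^\infty_+\Phi^{\geq N}(V)=\Sigma^\infty_+\Mono(\bC^N,V)$, is polynomial of degree at most $N$.

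This is the step where I expect the real work to lie, and I would carry it out with the parameterised calculus of \cref{subsection:parameterised-unitary-calculus}. The kernel map exhibits $\Phi^r(V)$ as a bundle over $Y_r:=\set{K\in\Grass(N-r,\bC^N)}{\bP(K)\cap\iota X=\emptyset}$ with fibre over $K$ the space $\Mono(\bC^N/K,V)$ of injective linear maps (a complex Stiefel manifold up to natural homotopy equivalence), and $\nu_r(V)$ restricts on that fibre to $\underline{\bC^{N-r}}\otimes\kappa(V)$ with $\kappa(V)$ the tautological cokernel bundle; from the splitting $\kappa(V)\oplus\underline{\bC^N/K}\cong\underline{V}$ one gets, naturally in $V$,
\[
\Sigma^\infty\mathrm{Th}\bigl(\nu_r(V)|_{\text{fibre over }K}\bigr)\;\simeq\;\Sigma^{-2r(N-r)}\,\bS^{\bC^{N-r}\otimes V}\otimes\Sigma^\infty_+\Mono(\bC^N/K,V).
\]
Here $V\mapsto\bS^{\bC^{N-r}\otimes V}$ is homogeneous of degree $N-r$, while $V\mapsto\Sigma^\infty_+\Mono(\bC^N/K,V)$ is polynomial of degree $r$ — the standard fact that the Stiefel functor $\Sigma^\infty_+\Mono(\bC^{r},-)$ is $r$-polynomial, which I would prove by induction over the sphere-bundle tower $\Mono(\bC^{r},-)\to\cdots\to\Mono(\bC,-)$ using that a tensor product of a $p$-polynomial and a $q$-polynomial unitary functor is $(p+q)$-polynomial — so their tensor product, and any trivial-sphere shift of it, is polynomial of degree $(N-r)+r=N$. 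Thus the fibrewise functor is $N$-polynomial, by \cref{lemma:fibrewise-polynomiality} the parameterised functor $\catJ\to\catSp^{Y_r}$ is $N$-polynomial, and applying the colimit functor $\catSp^{Y_r}\to\catSp$, which preserves $N$-polynomiality, shows $V\mapsto\Sigma^\infty\mathrm{Th}(\nu_r(V))$ is $N$-polynomial. Feeding this back through the cofibre sequences proves the theorem. The obstacles to keep in mind are the careful bookkeeping of the bundle structure of $\Phi^r(V)$ and $\nu_r(V)$ over $Y_r$ together with naturality in $V$, and having the two structural facts about polynomial unitary functors (polynomiality of the Stiefel functor, additivity of degrees under $\otimes$) at hand.
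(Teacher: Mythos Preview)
Your argument is correct and takes a genuinely different route from the paper. Both start with the rank stratification, but where the paper uses the sphere/disc-bundle pushout at each stratum and then runs a further recursive stratification on the sphere bundles (\cref{lemma:induction-disc-polynomial,lemma:induction-sphere-polynomial,corollary:induction-pushout-sphere-polynomial}), you pass directly to the Pontryagin--Thom cofibre and identify the fibrewise Thom spectrum over $K\in Y_r$ as a tensor of the $(N-r)$-homogeneous functor $\bS^{\bC^{N-r}\otimes V}$ with the $r$-polynomial Stiefel functor. This is cleaner for the bare polynomiality statement. The degree-additivity-under-$\otimes$ fact you invoke is true---filter each factor by its Weiss tower and observe that a tensor of an $i$-homogeneous with a $j$-homogeneous functor is $(i+j)$-homogeneous, then induct---but it is not stated in the paper and you should record a short proof rather than leave it as an aside. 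The paper's more laborious approach, by contrast, keeps the sphere and disc bundles separate throughout, and that is exactly the bookkeeping exploited later in \cref{subsection:p1p1-case,subsection:top-derivative} to actually compute the derivatives via explicit pushout diagrams; your Thom-space packaging would have to be unpacked again for those purposes.

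One correction, though it does not affect your proof: the parenthetical claim that Miller's technique splits the rank filtration on $\Phi(V)$ is unjustified. The scaling $A\mapsto tA$ does not preserve $\Phi(V)$ (as $t\to 0$ the kernel tends to all of $\bC^N$, which certainly meets $\iota X$), and Miller's actual argument for Stiefel manifolds rests on an eigenspace filtration with no analogue here. The derivative computations in \cref{subsection:p1p1-case} produce genuinely non-split cofibre sequences, which at least suggests no such splitting is available in general; you should drop that parenthetical. Finally, your opening geometric truncation of the filtration at $\rank A\geq n+1$ is correct but unnecessary: the empty strata contribute nothing, and the paper simply runs the filtration from $0$ to $N$ without comment.
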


Our proof uses various induction steps. The reader who wants to see the proof ``in action" in an example where these steps are kept to a minimum, but all the key ideas remain, is invited to look at \cref{subsection:p1p1-case}.

\begin{proof}
We will prove the theorem after a few reductions. We explain them here and reference future lemmas to finish the proof. Applying the rank filtration to $\Phi(V)$ produces a stratification
\[
    \Phi(V) = \Phi_0(V) \cup \Phi_1(V) \cup \ldots \cup \Phi_N(V)
\]
into disjoint submanifolds, where
\[
    \Phi_r(V) = \set{A \in \Phi(V)}{\dim \ker A = r}.
\]
This decomposition is functorial in $V$ because post-composing a linear map $\bC^n \to V$ with an embedding $V \hookrightarrow W$ does not change the dimension of the kernel.
Let us write $\nu_r(V)$ for the normal bundle of $\Phi_r(V)$ inside $\Phi_{\leq r}(V) = \bigcup_{i=0}^r \Phi_i(V)$. For each $r$ we have a homotopy pushout square of unitary functors
\[
\begin{tikzcd}
S(\nu_r(V)) \arrow[d] \arrow[r] & D(\nu_r(V)) \simeq \Phi_r(V) \arrow[d] \\
\Phi_{<r}(V) \arrow[r]          & \Phi_{\leq r}(V)     
\end{tikzcd}
\]
where $S(-)$ and $D(-)$ denote the sphere and disc bundle respectively. To show our \cref{theorem:holomorphic-polynomiality}, it suffices by \cref{lemma:polynomial-colimit-is-polynomial} to show that each of $\Sigma^\infty_+S(\nu_r(V))$, $\Sigma^\infty_+\Phi_r(V)$ and $\Sigma^\infty_+\Phi_{<r}(V)$ is $N$-polynomial. By induction, it suffices to show that for each $r=0,\ldots,N$ the functors $\Sigma^\infty_+S(\nu_r(V))$ and $\Sigma^\infty_+\Phi_r(V)$ are $N$-polynomial.
We will make use of a further reduction. Let us write
\[
    \Grass(r, N; \iota X) =  \set{P \in \Grass(r, \bC^N)}{P \cap \iota X = \emptyset} \subset \Grass(r, \bC^N)
\]
for the locus of $r$-planes not intersecting $\iota X$.

\begin{lemma}\label{lemma:fibrewise-functor}
Let $\cQ_r$ be the restriction of tautological quotient bundle on the Grassmannian to the locus $\Grass(r, N; \iota X)$. There is a homeomorphism of unitary functors to spaces over $\Grass(r, N; \iota X)$
\[
    \Phi_r(V) \cong \underline{\Mono}(\cQ_r, \underline{V})
\]
where $\underline{V}$ denotes the trivial vector bundle with fibre $V$, and $\underline{\Mono} \subset \underline{\Hom}$ denotes the bundle of linear injections.
\end{lemma}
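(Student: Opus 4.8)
The plan is to unwind the definitions of both sides as families of spaces over $\Grass(r,N;\iota X)$ and exhibit a fibrewise homeomorphism. An element of $\Phi_r(V)$ is a linear map $A \colon \bC^N \to V$ with $\dim\ker A = r$ and $\bP(\ker A)\cap \iota X = \emptyset$. The kernel map $\ker\colon \Phi_r(V) \to \Grass(r,\bC^N)$ lands in the open locus $\Grass(r,N;\iota X)$ precisely because of the emptiness condition, so $\Phi_r(V)$ is naturally a space over $\Grass(r,N;\iota X)$; this is where the subset $\Grass(r,N;\iota X)$ (rather than the full Grassmannian) enters. On the other side, the fibre of $\underline{\Mono}(\cQ_r,\underline V)$ over a point $P \in \Grass(r,N;\iota X)$ is the space of injective linear maps $\bC^N/P \hookrightarrow V$, where I use that the tautological quotient bundle $\cQ_r$ has fibre $\bC^N/P$ over $P$.

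First I would define the map $\Phi_r(V) \to \underline{\Mono}(\cQ_r,\underline V)$: send $A$ (with kernel $P = \ker A$) to the induced map $\bar A \colon \bC^N/P \to V$ on the quotient, which is injective by the first isomorphism theorem since $P$ is exactly $\ker A$. This covers the kernel map on base spaces, so it is a map of spaces over $\Grass(r,N;\iota X)$. Conversely, given $P \in \Grass(r,N;\iota X)$ and an injection $\bar A \colon \bC^N/P \hookrightarrow V$, I would produce the composite $\bC^N \twoheadrightarrow \bC^N/P \overset{\bar A}{\hookrightarrow} V$, whose kernel is exactly $P$ and hence disjoint (projectively) from $\iota X$, so it lands in $\Phi_r(V)$. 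These two constructions are mutually inverse by the universal property of the quotient, and both are continuous since they are assembled from the (continuous) tautological quotient bundle structure and composition/passage-to-quotient of linear maps, which vary continuously in families. One should check this is a homeomorphism, not merely a continuous bijection — but over each point of the (paracompact, finite-dimensional) base it is a linear-algebra homeomorphism, and the families are locally trivial, so it is a fibrewise homeomorphism over $\Grass(r,N;\iota X)$.

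Finally I would note naturality in $V$: an isometric embedding $V \hookrightarrow W$ post-composes $A$ (equivalently $\bar A$), and post-composition evidently preserves injectivity and commutes with the two constructions above, so the homeomorphism is one of unitary functors valued in spaces over $\Grass(r,N;\iota X)$. I do not expect any serious obstacle here: the only mild point of care is the topological one — ensuring the set-theoretic bijection is a genuine homeomorphism of the families — which is handled by the local triviality of the Grassmannian bundles and the fact that all the maps in sight are polynomial (hence continuous) in local coordinates.
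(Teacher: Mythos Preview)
Your proposal is correct and follows essentially the same approach as the paper: exhibit $\Phi_r(V)$ as a space over $\Grass(r,N;\iota X)$ via the kernel map, send $A$ to the induced injection $\bar A \colon \bC^N/\ker A \hookrightarrow V$, and observe this is a fibrewise homeomorphism. The paper's proof is terser (it simply asserts the fibrewise map is the natural isomorphism), whereas you spell out the inverse, the continuity argument, and the naturality in $V$ explicitly.
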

\begin{proof}
The kernel map exhibits $\Phi_r(V)$ as a space over $\Grass(r,N; \iota X)$. The natural map over $\Grass(r,N; \iota X)$
\begin{align*}
    \Phi_r(V) &\lra \underline{\Mono}(\cQ_r, \underline{V}) \\
    A &\longmapsto (\ker A, \tilde{A} \colon \bC^N / \ker A \hookrightarrow V)
\end{align*}
is a homeomorphism as the induced map on the fibre above a point $K$ is the natural isomorphism
\[
    \set{A \in \Phi_r(V)}{\ker A = K} \overset{\cong}{\lra} \Mono(\bC^N/K, V). \qedhere
\]
\end{proof}

Given $K \in \Grass(r,N; \iota X)$, we denote by $\Phi_r^K(V) \subset \Phi_r(V)$ the subspace of those linear maps $A$ with $\ker A = K$, and by $S(\nu_r(V))_K \subset S(\nu_r(V))$ the restriction of the sphere bundle to $\Phi_r^K(V)$. By \Cref{lemma:fibrewise-polynomiality}, to show that $\Sigma^\infty_+\Phi_r(V)$ and $\Sigma^\infty_+S(\nu_r(V))$ are $N$-polynomial it suffices to show that $\Sigma^\infty_+\Phi_r^K(V)$ and $\Sigma^\infty_+S(\nu_r(V))_K$ are $N$-polynomial for every $K \in \Grass(r,N; \iota X)$. The first result is proved in \cref{lemma:main-disc-polynomial} below, while the second follows from \cref{corollary:induction-pushout-sphere-polynomial} and the equivalence
\begin{align*}
    S(\nu_r(V))_K &\simeq \set{(A, \varphi) \in \Phi_r^K(V) \times \Hom(K, V / \image A)}{\varphi \neq 0} \\
    &= \set{(A, \varphi) \in \Mono(\bC^N/K, V) \times \Hom(K, V / \image A)}{\varphi \neq 0}.
\end{align*}
which is a consequence of \cref{lemma:normal-bundle-rank-matrices}.
\end{proof}

\begin{lemma}\label{lemma:main-disc-polynomial}
The functor $\Sigma^\infty_+\Phi_r^K$ is $(N-r)$-polynomial, hence $N$-polynomial.
\end{lemma}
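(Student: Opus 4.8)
The plan is to prove the stronger assertion — that $\Sigma^\infty_+\Phi_r^K$ is $(N-r)$-polynomial — by induction on $j := N-r$, after first making the functor completely explicit. Since every $A \in \Phi_r^K(V)$ has $\ker A = K$ on the nose, it factors uniquely as $\bC^N \twoheadrightarrow \bC^N/K \xrightarrow{\overline{A}} V$ with $\overline{A}$ injective; fixing an isomorphism $\bC^j \cong \bC^N/K$ this gives a homeomorphism of unitary functors $\Phi_r^K(V) \cong \Mono(\bC^j, V)$, the space of complex-linear injections, with $V$-functoriality by postcomposition. The base case $j=0$ is immediate: $\Mono(\bC^0, V) = \ast$, so $\Sigma^\infty_+\Phi_r^K$ is the constant functor $\underline{\bS}$, which is $0$-polynomial.

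For the inductive step I would exploit the restriction map $\Mono(\bC^j, V) \to \Mono(\bC^{j-1}, V)$ to $\bC^{j-1} \subset \bC^j$. This is a fibre bundle whose fibre over $\varphi$ is $V \setminus \image\varphi$; writing $\tau$ for the tautological rank-$(j-1)$ ``image'' subbundle of $\underline{V}$ over $\Mono(\bC^{j-1}, V)$ — which is canonically \emph{trivialised} by $\varphi$ itself — and $\tau^\perp$ for its orthogonal complement, the fibre deformation retracts onto $S(\tau^\perp_\varphi)$. Thus $\Mono(\bC^j, V)$ is fibrewise equivalent to the sphere bundle $S(\tau^\perp)$, and the cofibre sequence $S(\tau^\perp)_+ \to D(\tau^\perp)_+ \to \mathrm{Th}(\tau^\perp)$, combined with $\tau^\perp \oplus \tau \cong \underline{V}$ and the triviality of $\tau$, produces (after passing to suspension spectra, naturally in $V$) a cofibre sequence of unitary functors
\[
    \Sigma^\infty_+\Mono(\bC^j, -) \lra \Sigma^\infty_+\Mono(\bC^{j-1}, -) \lra \bS^{1\cdot (-)} \wedge \Sigma^{-2(j-1)}\Sigma^\infty_+\Mono(\bC^{j-1}, -).
\]
This is precisely the (split) filtration underlying Miller's stable splitting of the Stiefel manifold, although only the cofibre sequence is needed, not the splitting itself.

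By the inductive hypothesis $\Sigma^\infty_+\Mono(\bC^{j-1}, -)$ is $(j-1)$-polynomial, hence $j$-polynomial; and since $\Sigma^{-2(j-1)}$ is an equivalence of $\catSp$ it commutes with the Weiss-tower constructions, so $G := \Sigma^{-2(j-1)}\Sigma^\infty_+\Mono(\bC^{j-1}, -)$ is again $(j-1)$-polynomial. The third term $\bS^{1\cdot(-)} \wedge G$ is then $j$-polynomial by the general principle that smashing with $\bS^{1\cdot(-)}$ raises polynomial degree by one: a $k$-homogeneous functor $(\Theta \wedge \bS^{k\cdot V})_{hU(k)}$ becomes $(\Ind_{U(k)}^{U(k+1)}\Theta \wedge \bS^{(k+1)\cdot V})_{hU(k+1)}$, which is $(k+1)$-homogeneous, and applying this to the finitely many layers of the Weiss tower of $G$ exhibits $\bS^{1\cdot(-)} \wedge G$ as built out of functors that are homogeneous of degree $\leq j$. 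Finally, $j$-polynomial functors to $\catSp$ are closed under fibres — by the stability argument of \cref{lemma:polynomial-colimit-is-polynomial}, as $T_j$ is a filtered colimit of the (limit-preserving) operations $\tau_j$ — so $\Sigma^\infty_+\Mono(\bC^j, -) \simeq \mathrm{fib}$ of the displayed map is $j$-polynomial, completing the induction, and in particular it is $N$-polynomial. The delicate point is the $V$-naturality of the entire cofibre sequence — concretely, that the trivialisation of $\tau$ and the identification $\mathrm{Th}(\tau^\perp) \simeq \Sigma^{-2(j-1)}\bS^{1\cdot V} \wedge \Sigma^\infty_+\Mono(\bC^{j-1}, V)$ are natural in $V$ — together with the coefficient-spectrum bookkeeping ($\Ind_{U(k)}^{U(k+1)}$) in the degree-raising step; both are routine but must be carried out carefully.
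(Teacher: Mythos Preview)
Your argument is correct, but it takes a genuinely different route from the paper. The paper's proof is a two-line citation: after identifying $\Phi_r^K(V) \simeq \Hom_\catJ(\bC^N/K,V)$ via Gram--Schmidt, it simply invokes Miller's stable splitting of Stiefel manifolds (in Arone's unitary-calculus formulation) as a black box to conclude $(N-r)$-polynomiality. You instead give a self-contained inductive proof of that polynomiality, using the restriction fibration $\Mono(\bC^j,V)\to\Mono(\bC^{j-1},V)$, the resulting cofibre sequence, and the observation that $\bS^{1\cdot(-)}\wedge-$ raises polynomial degree by one. What the paper's approach buys is brevity and, implicitly, more: Miller's result actually gives the full split Weiss tower with explicit derivatives $\bS^{\ad_k}\otimes\Sigma^\infty_+\Hom_\catJ(\bC^k,\bC^N/K)$, which is used elsewhere in the paper (e.g.\ in \cref{subsection:p1p1-case,subsection:top-derivative}). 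What your approach buys is that it is elementary and self-contained---you never need to know that the tower splits, only that it terminates---and it makes transparent \emph{why} the degree is exactly $N-r$. Your flagged ``delicate points'' (naturality of the Thom-spectrum identification and the $\Ind_{U(k)}^{U(k+1)}$ bookkeeping) are indeed routine; the only small simplification I would suggest is that closure of $j$-polynomial functors under fibres follows more directly from the fact that fibres are \emph{limits} and the defining condition for polynomiality is itself an equivalence with a limit, so no appeal to stability or \cref{lemma:polynomial-colimit-is-polynomial} is needed there.
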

\begin{proof}
By Gram--Schmidt there is a functorial homotopy equivalence 
\[
    \Phi_r^K(V) \simeq \Hom_\catJ(\bC^N/K, V)
\]
with the Stiefel manifold of $(N-r)$-frames in $V$. Miller's stable splitting \cite{Miller} shows that $\Sigma^\infty_+ \Hom_\catJ(\bC^N/K, V)$ is $(N-r)$-polynomial, see \cite{Arone} for the point of view of unitary calculus on this result.
\end{proof}

We now turn our attention to the sphere bundle. For notational reasons, it will be convenient for our recursive arguments to consider a general situation. Let $K \subset \bC^N$ be a linear subspace of dimension $k$ and define
\[
    M_K(V) = \left\{ (A, \varphi) \in \Mono(\bC^N/K, V) \times \Hom(K, V/\image A) \right\}.
\]
There is a stratification of $M_K$ by submanifolds
\[
    M_K(V) = M_K^0(V) \cup M_K^1(V) \cup \ldots \cup M_K^k(V)
\]
where
\[
    M_K^i(V) = \set{(A, \varphi) \in M_K(V)}{\dim \ker \varphi = i}.
\]
Let $M_K^{\leq i}(V) = \bigcup_{j=0}^i M_K^j(V)$ and let $\mu_K^i(V)$ be the normal bundle of the inclusion $M_K^i(V) \subset M_K^{\leq i}(V)$.

\begin{lemma}\label{lemma:induction-disc-polynomial}
The functor $\Sigma^\infty_+M_K^i(V)$ is $(N-i)$-polynomial.
\end{lemma}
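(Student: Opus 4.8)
The plan is to run the two-step pattern of \cref{theorem:holomorphic-polynomiality}--\cref{lemma:main-disc-polynomial} one more time, now fibred over a Grassmannian attached to $K$. First I would observe that sending $(A,\varphi)$ to $\ker\varphi$ defines a natural transformation from $M_K^i$ to the constant functor at $\Grass(i,K)$ (on the locus $M_K^i$ the map $\varphi$ has constant rank $k-i$, so its kernel varies continuously): an isometric embedding $V \hookrightarrow W$ carries $\varphi$ to its composite with the induced monomorphism $V/\image A \hookrightarrow W/\image A'$, which has the same kernel. So $M_K^i$ refines to a unitary functor $\catJ \to \catS^{\Grass(i,K)}$ whose base does \emph{not} depend on $V$. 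By \cref{lemma:fibrewise-polynomiality} — which then makes the parameterised functor $\Sigma^\infty_+ M_K^i \colon \catJ \to \catSp^{\Grass(i,K)}$ itself $(N-i)$-polynomial once all of its fibres are — together with the fact that a colimit over the base of an $(N-i)$-polynomial parameterised functor is again $(N-i)$-polynomial, it suffices to show that $\Sigma^\infty_+ (M_K^i)_L$ is $(N-i)$-polynomial for every $L \in \Grass(i,K)$, where $(M_K^i)_L(V) = \{(A,\varphi) \in M_K^i(V) : \ker\varphi = L\}$.

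I would then identify this fibre. Since $\ker\varphi = L$, the map $\varphi$ factors uniquely as the composite $K \twoheadrightarrow K/L \xrightarrow{\psi} V/\image A$ with $\psi$ a monomorphism, which gives a homeomorphism of unitary functors
\[
    (M_K^i)_L(V) \cong \left\{ (A,\psi) \mid A \in \Mono(\bC^N/K, V),\ \psi \in \Mono(K/L, V/\image A) \right\}.
\]
Just as in the proof of \cref{lemma:main-disc-polynomial}, I would use the inner product of $V$ to identify $V/\image A$ with the orthogonal complement $(\image A)^\perp \subseteq V$ and apply Gram--Schmidt first to $A$ and then to $\psi$; concatenating the two resulting orthonormal frames yields a homotopy equivalence, natural in $V$, between $(M_K^i)_L(V)$ and $\Hom_\catJ(\bC^{N-i}, V)$, the Stiefel manifold of orthonormal $(N-i)$-frames in $V$, where $N - i = \dim(\bC^N/K) + \dim(K/L) = (N-k) + (k-i)$. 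Miller's stable splitting \cite{Miller} (see \cite{Arone} for its formulation in unitary calculus) then shows that $\Sigma^\infty_+ \Hom_\catJ(\bC^{N-i}, V)$ is $(N-i)$-polynomial, which completes the proof.

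Everything past the identification of the fibre is formal, so the step needing genuine care is the naturality in $V$ of the constructions: both the projection $(A,\varphi)\mapsto\ker\varphi$ and the identification $V/\image A \cong (\image A)^\perp$ must be compatible with isometric embeddings — the latter because an isometric embedding $V \hookrightarrow W$ carries the orthogonal splitting $V = \image A \oplus (\image A)^\perp$ to the corresponding splitting of $W$. I do not expect any obstacle beyond this bookkeeping, the linear algebra being identical to that of \cref{lemma:main-disc-polynomial}.
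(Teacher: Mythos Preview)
Your proof is correct and follows essentially the same route as the paper: fibre over $\Grass(i,K)$ via $\ker\varphi$, identify each fibre with a Stiefel manifold of $(N-i)$-frames, and invoke Miller's splitting. The paper writes the fibre as $\Hom_\catJ(\bC^N/K \oplus K/P, V)$ rather than $\Hom_\catJ(\bC^{N-i},V)$, but this is only a cosmetic difference, and your extra care with naturality in $V$ is welcome.
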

\begin{proof}
The space $M_K^i(V)$ fibres over the Grassmannian $\Grass(i, K)$ via the map taking the kernel of $\varphi$. By \cref{lemma:fibrewise-polynomiality} it suffices to show that, for any $P \in \Grass(i,K)$, the functor
\[
    V \longmapsto \Sigma^\infty_+ \set{(A, \varphi) \in M_K^i(V)}{\ker \varphi = P}
\]
is $(N-i)$-polynomial. We have equivalences
\begin{align*}
    \set{(A, \varphi) \in M_K^i(V)}{\ker \varphi = P} &\cong \set{(A, \tilde{\varphi})}{A \in \Mono(\bC^N/K,V), \ \tilde{\varphi} \in \Mono(K/P, V/\image A)} \\
    &\simeq \Hom_\catJ(\bC^N/K \oplus K/P, V).
\end{align*}
And by Miller's stable splitting, $V \mapsto \Sigma^\infty_+\Hom_\catJ(\bC^N/K \oplus K/P, V)$ is $(N-i)$-polynomial.
\end{proof}

\begin{lemma}\label{lemma:induction-sphere-polynomial}
The functor $\Sigma^\infty_+S(\mu_K^i(V))$ is $N$-polynomial.
\end{lemma}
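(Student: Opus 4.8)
The plan is to reduce, by fibring over a Grassmannian and then invoking a Thom space cofibre sequence, to the polynomiality of suspended Stiefel manifolds, which can in turn be fed into Miller's splitting. First I would work fibrewise over the map $\ker\varphi\colon M_K^i(V)\to\Grass(i,K)$, along which the sphere bundle $S(\mu_K^i(V))$ is also fibred. For $P\in\Grass(i,K)$ write $S(\mu_K^i(V))_P$ for the part of $S(\mu_K^i(V))$ lying over $\{(A,\varphi)\in M_K^i(V):\ker\varphi=P\}$. Since the base $\Grass(i,K)$ is compact and independent of $V$, there is an equivalence of unitary functors
\[
    \Sigma^\infty_+ S(\mu_K^i(V))\simeq\colim_{P\in\Grass(i,K)}\Sigma^\infty_+ S(\mu_K^i(V))_P,
\]
so by \cref{lemma:polynomial-colimit-is-polynomial} (or, parametrically, by \cref{lemma:fibrewise-polynomiality}) it suffices to prove that each $V\mapsto\Sigma^\infty_+ S(\mu_K^i(V))_P$ is $N$-polynomial. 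Applying \cref{lemma:normal-bundle-rank-matrices} fibrewise over $A$ and using Gram--Schmidt exactly as in the proof of \cref{lemma:induction-disc-polynomial}, one identifies $M_K^i(V)_P\simeq\Hom_\catJ(\bC^N/K\oplus K/P,V)=\Hom_\catJ(\bC^{N-i},V)$, and under this equivalence $\mu_K^i(V)$ restricts to the bundle $\underline{\Hom}(\underline P,\cQ)\cong\cQ^{\oplus i}$, where $\cQ$ is the tautological cokernel bundle of the universal $(N-i)$-frame. Hence $S(\mu_K^i(V))_P\simeq S(\cQ^{\oplus i})$.

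Next I would use the Thom space cofibre sequence
\[
    \Sigma^\infty_+ S(\cQ^{\oplus i})\lra\Sigma^\infty_+ D(\cQ^{\oplus i})\lra\Sigma^\infty\mathrm{Th}(\cQ^{\oplus i}),
\]
whose middle term is $\Sigma^\infty_+\Hom_\catJ(\bC^{N-i},V)$ (a disc bundle deformation retracts onto its base), which is $(N-i)$-polynomial by Miller's stable splitting \cite{Miller, Arone}. Since the $N$-polynomial functors are closed under fibres ($T_N$ commutes with finite limits, as in the proof of \cref{lemma:polynomial-colimit-is-polynomial}), it remains to prove that the Thom spectrum $\Sigma^\infty\mathrm{Th}(\cQ^{\oplus i})$ is $N$-polynomial. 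Over $\Hom_\catJ(\bC^{N-i},V)$ the tautological sub-bundle is canonically framed, so $\cQ\oplus\underline{\bC^{N-i}}\cong\underline V$, whence $\cQ^{\oplus i}\oplus\underline{\bC^{i(N-i)}}\cong\underline{\bC^i\otimes V}$ and therefore
\[
    \Sigma^{2i(N-i)}\,\Sigma^\infty\mathrm{Th}(\cQ^{\oplus i})\simeq\bS^{\bC^i\otimes V}\otimes\Sigma^\infty_+\Hom_\catJ(\bC^{N-i},V).
\]
The functor $V\mapsto\bS^{\bC^i\otimes V}$ is itself not polynomial, but Miller's splitting expresses $\Sigma^\infty_+\Hom_\catJ(\bC^{N-i},V)\simeq\bigvee_{j=0}^{N-i}\Theta_j\otimes_{U(j)}\bS^{jV}$ as a finite wedge of homogeneous functors, with $\Theta_j$ a $U(j)$-spectrum, and smashing the $j$th summand with $\bS^{\bC^i\otimes V}=\bS^{iV}$ gives
\[
    \Theta_j\otimes_{U(j)}\bS^{(i+j)V}\simeq\bigl(\Ind_{U(j)}^{U(i+j)}\Theta_j\bigr)\otimes_{U(i+j)}\bS^{(i+j)V},
\]
by the projection formula for induction; this is $(i+j)$-homogeneous, hence $(i+j)$-polynomial. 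As $j\leq N-i$, each summand is at most $N$-polynomial, so by \cref{lemma:polynomial-colimit-is-polynomial} the wedge is $N$-polynomial; shifting back by the fixed invertible spectrum $\bS^{-2i(N-i)}$ does not change this, and the lemma follows.

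The step I expect to be the main obstacle is the last one: controlling $\Sigma^\infty\mathrm{Th}(\cQ^{\oplus i})$ despite the presence of the non-polynomial factor $\bS^{\bC^i\otimes V}$. What makes it go through is that smashing with $\bS^{iV}$ merely raises the homogeneous degree of each layer of Miller's splitting by $i$, and the layers that actually occur have degree at most $N-i$, so nothing climbs above degree $N$. A secondary point requiring care is the verification that, after passing to the fibre over $P$, the normal bundle $\mu_K^i(V)$ really is the bundle $\underline{\Hom}(\underline P,\cQ)$ on the Stiefel manifold; this is a bookkeeping exercise with \cref{lemma:normal-bundle-rank-matrices} applied fibrewise over $A$, keeping track of kernels, images and the relevant quotient spaces.
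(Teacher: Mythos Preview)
Your argument is correct and takes a genuinely different route from the paper. The paper proceeds by induction on $\dim K$: over each $P \in \Grass(i,K)$ it identifies $S(\mu_K^i(V))_P$ with the space of triples $(A,\varphi,\phi)$ with $\phi \colon P \to V/\image(A+\varphi)$ nonzero, and then stratifies this further by $\dim\ker\phi$, producing pushout squares whose pieces are either Stiefel manifolds (handled by \cref{lemma:induction-disc-polynomial}) or sphere bundles of the same shape but with $K$ replaced by a strictly smaller subspace, so the induction closes. Your approach instead computes $S(\mu_K^i(V))_P$ explicitly as the sphere bundle $S(\cQ^{\oplus i})$ over the Stiefel manifold and kills it in one step via the Thom cofibre sequence; the key observation, which the paper's induction never isolates, is that the framing $\cQ \oplus \underline{\bC^{N-i}} \cong \underline{V}$ converts $\mathrm{Th}(\cQ^{\oplus i})$ into $\bS^{iV}$ smashed with a Stiefel manifold whose Miller pieces have degree at most $N-i$, so smashing raises each degree by exactly $i$ and nothing exceeds $N$. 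This is cleaner and avoids the nested induction entirely. The paper's argument, by contrast, stays uniform with the overall strategy of \cref{theorem:holomorphic-polynomiality} and does not need the explicit form of Miller's splitting or the projection formula for $\Ind_{U(j)}^{U(i+j)}$.

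One small correction: your aside that $V \mapsto \bS^{\bC^i \otimes V}$ is ``itself not polynomial'' is wrong --- it is $i$-homogeneous, with derivative $\Sigma^\infty_+ U(i)$. This does not affect the argument, since you still need the degree bookkeeping through Miller's splitting to bound the tensor product by $N$; it just means your anticipated ``main obstacle'' is milder than you feared.
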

\begin{proof}
We will show the result inductively on the dimension $k$ of $K$. If $k=0$, the result is clear because $\mu_K^i$ has rank $0$, hence the sphere bundle is empty. Suppose now that $K$ has dimension $k > 0$ and the result is true for lower dimensions. The space $S(\mu_K^i(V))$ fibres over the Grassmannian $\Grass(i, K)$ via the map taking the kernel of $\varphi$. By \cref{lemma:normal-bundle-rank-matrices}, the fibre above $P \in \Grass(i,K)$ is homotopy equivalent to the space
\[
    S(\mu_K^i(V))_P = \set{(A, \varphi, \phi) \in M_K^i(V) \times \Hom(P, V/\image (A+\varphi))}{\phi \neq 0}.
\]
By \cref{lemma:fibrewise-polynomiality} it is enough to show that $\Sigma^\infty_+S(\mu_K^i(V))_P$ is $N$-polynomial for every $P \in \Grass(i,K)$.
We may further stratify this space by submanifolds according to the dimension of the kernel of $\phi$:
\[
    S(\mu_K^i(V))_P = S(\mu_K^i(V))_P^0 \cup S(\mu_K^i(V))_P^1 \cup \ldots \cup S(\mu_K^i(V))_P^{i-1}
\]
where
\[
    S(\mu_K^i(V))_P^j = \set{(A, \varphi, \phi) \in S(\mu_K^i(V))_P}{\dim \ker \phi = j}.
\]
Let $S(\mu_K^i(V))_P^{\leq k} = \bigcup_{j=0}^k S(\mu_K^i(V))_P^j$ and $\eta_{K,P}^{i,j}(V)$ be the normal bundle of the inclusion $S(\mu_K^i(V))_P^j \subset S(\mu_K^i(V))_P^{\leq j}$. For any $0 \leq j \leq i-1$, we have a homotopy pushout square
\[
\begin{tikzcd}
{S(\eta_{K,P}^{i,j}(V))} \arrow[d] \arrow[r] & {D(\eta_{K,P}^{i,j}(V)) \simeq S(\mu_K^i(V))_P^j} \arrow[d] \\
S(\mu_K^i(V))_P^{< j} \arrow[r]       & S(\mu_K^i(V))_P^{\leq j} 
\end{tikzcd}
\]
Hence, by induction, to show that $\Sigma^\infty_+S(\mu_K^i(V))_P$ is $N$-polynomial, it is enough to show that $\Sigma^\infty_+S(\mu_K^i(V))_P^j$ and $\Sigma^\infty_+S(\eta_{K,P}^{i,j}(V))$ are $N$-polynomial. Let us begin with the former: we have an equivalence
\[
    S(\mu_K^i(V))_P^j \simeq M_{P}^j(V), \quad (A, \varphi, \phi) \mapsto (A+\varphi, \phi).
\]
Hence $\Sigma^\infty_+S(\mu_K^i(V))_P^j$ is $(N-j)$-polynomial by \cref{lemma:induction-disc-polynomial}. We now turn our attention to the sphere bundle $S(\eta_{K,P}^{i,j}(V))$. This space fibres over the Grassmannian $\Grass(j, P)$ via the map taking the kernel of $\phi$. By \cref{lemma:normal-bundle-rank-matrices}, the fibre above $L \in \Grass(j,P)$ is homotopy equivalent to the space
\[
    S(\eta_{K,P}^{i,j}(V))_L \simeq \set{(A, \varphi, \phi, \psi) \in S(\mu_K^i(V))_P^j \times \Hom(L, V/(\image A + \varphi + \phi))}{\psi \neq 0}.
\]
By \cref{lemma:fibrewise-polynomiality} again, it is enough for us to show that $\Sigma^\infty_+S(\eta_{K,P}^{i,j}(V))_L$ is $N$-polynomial to conclude that $\Sigma^\infty_+S(\eta_{K,P}^{i,j}(V))$ is $N$-polynomial. But this follows by our inductive assumption as $\dim L < \dim P$.
\end{proof}

\begin{corollary}\label{corollary:induction-pushout-sphere-polynomial}
For any $0 \leq i \leq k$, the functor $\Sigma^\infty_+M_K^{\leq i}(V)$ is $N$-polynomial.
\end{corollary}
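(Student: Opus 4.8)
The plan is a short induction on $i$ that simply assembles results already in place. First I would treat the base case $i = 0$: here $M_K^{\leq 0}(V) = M_K^0(V)$, so $\Sigma^\infty_+ M_K^{\leq 0}$ is $N$-polynomial directly by \cref{lemma:induction-disc-polynomial}. For the inductive step, fix $1 \leq i \leq k$ and assume $\Sigma^\infty_+ M_K^{\leq i-1}$ is $N$-polynomial. Since $\dim \ker \varphi \leq i-1$ is equivalent to $\rank \varphi \geq k-i+1$, which is an open condition, the subspace $M_K^{<i}(V) = M_K^{\leq i-1}(V)$ is open in $M_K^{\leq i}(V)$ and $M_K^i(V)$ is the complementary closed submanifold with normal bundle $\mu_K^i(V)$. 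The tubular neighbourhood theorem --- applied fibrewise over $\Grass(i,K)$ as in the proof of \cref{lemma:induction-sphere-polynomial}, and naturally in $V$ --- then gives a homotopy pushout square of unitary functors to spaces
\[
\begin{tikzcd}
S(\mu_K^i(V)) \arrow[d] \arrow[r] & D(\mu_K^i(V)) \simeq M_K^i(V) \arrow[d] \\
M_K^{<i}(V) \arrow[r] & M_K^{\leq i}(V)
\end{tikzcd}
\]

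Next I would apply $\Sigma^\infty_+$. As a left adjoint it preserves colimits, so the square above is carried to a homotopy pushout square of unitary functors to spectra of the same shape. Its three non-terminal corners are $N$-polynomial: $\Sigma^\infty_+ M_K^i$ by \cref{lemma:induction-disc-polynomial}, $\Sigma^\infty_+ S(\mu_K^i)$ by \cref{lemma:induction-sphere-polynomial}, and $\Sigma^\infty_+ M_K^{<i} = \Sigma^\infty_+ M_K^{\leq i-1}$ by the inductive hypothesis. A homotopy pushout is a finite colimit, so \cref{lemma:polynomial-colimit-is-polynomial} yields that $\Sigma^\infty_+ M_K^{\leq i}$ is $N$-polynomial, completing the induction and hence the proof.

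I do not expect any real obstacle here; the genuine content lives in the mutually recursive \cref{lemma:induction-disc-polynomial,lemma:induction-sphere-polynomial}. The only two points I would be careful to spell out are that the rank stratification of $M_K(V)$ yields a homotopy pushout square that is natural in $V$, so that it makes sense as a pushout of unitary \emph{functors} to which the notion of polynomiality applies, and that $\Sigma^\infty_+$ sends this to a genuine pushout square of spectra --- both standard, the latter because $\Sigma^\infty_+$ is a left adjoint.
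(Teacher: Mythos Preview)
Your proposal is correct and follows exactly the paper's approach: the paper's proof consists of the same homotopy pushout square together with a citation of \cref{lemma:induction-disc-polynomial,lemma:induction-sphere-polynomial}, with the induction and the appeal to \cref{lemma:polynomial-colimit-is-polynomial} left implicit. You have simply spelled out those steps.
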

\begin{proof}
This follows from the homotopy pushout square
\[
\begin{tikzcd}
S(\mu_K^i(V)) \arrow[d] \arrow[r] & D(\mu_K^i(V)) \simeq M_K^{i}(V) \arrow[d] \\
M_K^{< i}(V) \arrow[r]            & M_K^{\leq i}(V)                          
\end{tikzcd}
\]
and \cref{lemma:induction-disc-polynomial,lemma:induction-sphere-polynomial}.
\end{proof}

\subsection{Polynomiality of holomorphic maps}\label{subsection:polynomiality-full-mapping-space}

There are two distinct steps to go from $\Gammahol(\cL \otimes V -0)$ to $\Hol_\alpha(X, \bP(V))$. The first is to understand how the polynomiality of $\Sigma^\infty_+\Gammahol(\cL \otimes V -0)$ implies that of $\Sigma^\infty_+(\Gammahol(\cL \otimes V -0)/\bC^\times)$: this will follow from general manipulations in unitary calculus and we explain this first. The second requires to understand how varying $\cL$ in $\Pic^\alpha(X)$ affects the unitary functor of sections.

\subsubsection{The canonical \texorpdfstring{$U(1)$}{U(1)}-action}\label{subsubsection:canonical-u1-action}
Any unitary functor $F$ is canonically equipped with an action by the unitary group $U(1)$: an element $\lambda \in U(1)$ acts on $F$ via the natural transformation
\[
    F(V) \overset{F(\lambda \cdot)}{\lra} F(V)
\]
where $\lambda \cdot$ is the linear morphism given by multiplication by $\lambda$. We thus obtain a functor
\[
    (-)_{hU(1)} \colon \catFun(\catJ, \catSp) \lra \catFun(\catJ, \catSp)
\]
sending a unitary functor $F$ to its homotopy orbits $F_{hU(1)}(V) = F(V)_{hU(1)}$. We record its effect on the tower:
\begin{lemma}\label{lemma:natural-u1-action}
There is a natural equivalence $(T_kF)_{hU(1)} \simeq T_k(F_{hU(1)})$. In particular, if $F$ if $k$-polynomial then so is $F_{hU(1)}$. On derivatives there is an induced equivalence $\Theta_k (F_{hU(1)}) \simeq (\Theta_kF)_{hU(1)}$ where the second action is through the diagonal inclusion $U(1) \subset U(k)$.
\end{lemma}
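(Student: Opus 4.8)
The plan is to exploit that $(-)_{hU(1)}$ is computed objectwise as a homotopy orbit spectrum, hence is a homotopy colimit: in particular it preserves all colimits and, since $\catSp$ is stable, all \emph{finite} limits. Note also that $F$ sends every morphism of $\catJ$ to a map which is $U(1)$-equivariant for the canonical actions, so every limit and colimit used to build the Weiss tower of $F$ is already defined in $U(1)$-equivariant spectra; this will let me apply $(-)_{hU(1)}$ compatibly with the constructions of $\tau_k$, $T_k$ and the layers.

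First I would dispose of the polynomiality clause, which is the clean case. If $F$ is $k$-polynomial then $\rho\colon F\to\tau_kF$ is an equivalence. The corresponding map $\rho\colon F_{hU(1)}\to\tau_k(F_{hU(1)})$ is, objectwise at $V$, the result of applying $(-)_{hU(1)}$ to $\rho\colon F(V)\to\lim_{0\neq W\subset\bC^{k+1}}F(V\oplus W)$ and then commuting $(-)_{hU(1)}$ past this finite limit (the topological poset is compact, see \cite[Proof of Lemma~e.3]{WeissErratum}, and $\catSp$ is stable). Hence it is an equivalence and $F_{hU(1)}$ is $k$-polynomial.

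For the equivalence $(T_kF)_{hU(1)}\simeq T_k(F_{hU(1)})$ I would argue via the universal property. The unit $F\to T_kF$ induces, after $(-)_{hU(1)}$, a map $F_{hU(1)}\to(T_kF)_{hU(1)}$ whose target is $k$-polynomial by the previous paragraph; by the universal property of polynomial approximation it factors uniquely through a map $\Psi\colon T_k(F_{hU(1)})\to(T_kF)_{hU(1)}$, and it remains to prove $\Psi$ is an equivalence. Using $T_k(-)\simeq\colim_j\tau_k^j(-)$ and that $(-)_{hU(1)}$ commutes with this sequential colimit, both sides get identified with $\colim_j$ of the homotopy orbits of two $U(1)$-actions on the functor $\tau_k^jF$: on one side the canonical action of $\tau_k^jF$ itself, on the other the action obtained by applying $\tau_k^j$ to the canonical action of $F$. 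At any finite stage $j$ these two actions are \emph{not} equal --- they differ by the scaling action on the adjoined $j$-fold summand $(\bC^{k+1})^{\oplus j}$, so $(-)_{hU(1)}$ does not commute with a single $\tau_k$ --- but after passing to the colimit over $j$ this adjoined direction becomes a contractible, hence universal, $U(1)$-space, so the two sequential colimits of homotopy orbits agree. Making precise that this discrepancy washes out in the colimit is the main technical point of the argument; everything else is formal.

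Finally, for the derivatives I would apply the first two parts to the $k$-th layer $D_kF=\mathrm{fib}(T_kF\to T_{k-1}F)$, another finite limit, to obtain $D_k(F_{hU(1)})\simeq(D_kF)_{hU(1)}$. Under the identification $D_kF\simeq\Theta_kF\otimes_{U(k)}\bS^{k\cdot-}$, the canonical $U(1)$-action obtained by scaling $V$ acts on $\bS^{k\cdot V}=\bS^{\bC^k\otimes V}$ exactly as the scalar subgroup $U(1)\subset U(k)$ acting on the $\bC^k$-factor; absorbing this action into the homotopy orbits for $U(k)$ gives $(D_kF)_{hU(1)}\simeq(\Theta_kF)_{hU(1)}\otimes_{U(k)}\bS^{k\cdot-}$, with $U(1)$ acting on $\Theta_kF$ through $U(1)\subset U(k)$. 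Reading off the $k$-th derivative yields $\Theta_k(F_{hU(1)})\simeq(\Theta_kF)_{hU(1)}$ with the claimed action.
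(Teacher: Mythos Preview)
Your approach follows the paper's strategy --- commute $(-)_{hU(1)}$ with the construction of $T_k$ as $\colim_j\tau_k^j$, then read off polynomiality and the derivative formula --- and in fact goes beyond it by noticing a subtlety the paper's one-line proof glosses over. Commuting homotopy orbits past $\tau_k$ yields the $U(1)$-action on $\tau_k^jF$ obtained by applying $\tau_k^j$ to the canonical action on $F$ (this scales all of $V\oplus W_1\oplus\dots\oplus W_j$), whereas the canonical action on $\tau_k^jF$ as a unitary functor scales only $V$. These really are different at each finite stage: for $F=\Sigma^\infty_+S(-)$, $k=0$ and $V=0$, the applied action on $\tau_0F(0)=\Sigma^\infty_+S^1$ is free while the canonical one is trivial, and their homotopy orbits disagree. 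So the assertion that one simply commutes $(-)_{hU(1)}$ with $\tau_k$ is not literally correct, and you are right to flag it.

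Your proposed resolution, however, does not work as stated. The claim that ``the adjoined direction becomes a contractible, hence universal, $U(1)$-space'' has no clear meaning here: the $W$-variables index a finite limit and do not parametrise a subspace on which $U(1)$ acts freely, so there is no evident mechanism by which the residual action becomes trivialisable in the colimit. A cleaner way to close the gap is to stay with the universal property you already invoked, but one categorical level up. Post-composition lifts $T_k$ to an endofunctor of $\catFun(BU(1),\catFun(\catJ,\catSp))$, and this lift is still a localisation onto $k$-polynomial objects (polynomiality is detected on underlying functors). Applied to $F$ with its canonical action it produces $T_kF$ with the \emph{applied} action. On the other hand $T_kF$ with the \emph{canonical} action is already local, and the unit $\rho_F\colon F\to T_kF$ is equivariant for canonical-to-canonical (by naturality of $\rho_F$ as a transformation of functors on $\catJ$, evaluated at $\lambda\cdot_V$). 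The universal property then yields a comparison map of $U(1)$-objects from $T_kF$ with the applied action to $T_kF$ with the canonical action which is the identity on underlying functors, hence an equivalence. This makes your $\Psi$ an equivalence; your treatment of the polynomiality clause and of the derivatives is then correct and matches the paper.
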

\begin{proof}
The first statement follows from the formula for $T_k(-)$ and commuting the finite limit appearing in the formula for $\tau_k$ with the homotopy orbits. For the second statement, recall that the $k$th layer is given by
\[
    D_kF(V) \simeq \Theta_kF \otimes_{U(k)} \bS^{kV}
\]
so that
\[
    D_k(F_{hU(1)})(V) \simeq (D_kF(V))_{hU(1)} \simeq (\Theta_kF \otimes_{U(k)} \bS^{kV})_{hU(1)}
\]
where the first equivalence is a consequence of the first part of the lemma, and $U(1)$ acts on $V$. But $U(k)$ acts diagonally and the $U(1)$ action commutes with it, hence 
\[
    (\Theta_kF \otimes_{U(k)} \bS^{kV})_{hU(1)} \simeq (\Theta_kF)_{hU(1)} \otimes_{U(k)} \bS^{kV}. \qedhere
\]
\end{proof}

The scalar action of $U(1) \subset \bC^\times$ on $\Gammahol(\cL \otimes V -0)$ coincides with the canonical action on the unitary functor. We thus obtain:
\begin{corollary}
Let $N = \dim H^0(X,\cL)$. The unitary functor
\[
    V \longmapsto \Sigma^\infty_+(\Gammahol(\cL \otimes V -0)/\bC^\times)
\]
is $N$-polynomial.
\end{corollary}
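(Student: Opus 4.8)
The plan is to present $\Gammahol(\cL \otimes V - 0)/\bC^\times$ as a homotopy orbit construction and then invoke the two results just established. First I would note that the scalar action of $\bC^\times$ on the space $\Gammahol(\cL \otimes V - 0)$ of nowhere-vanishing holomorphic sections is \emph{free}, since a nonzero section cannot be fixed by a scalar $\lambda \neq 1$. Splitting $\bC^\times \cong U(1) \times \bR_{>0}$, the map induced by the inclusion $U(1) \subset \bC^\times$,
\[
    \Gammahol(\cL \otimes V - 0)/U(1) \lra \Gammahol(\cL \otimes V - 0)/\bC^\times,
\]
is an $\bR_{>0}$-bundle, hence a homotopy equivalence, and it is natural in $V$. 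Because $U(1)$ acts freely, the source is a model for the homotopy orbits $\bigl(\Gammahol(\cL \otimes V - 0)\bigr)_{hU(1)}$, so the scalar quotient computes these homotopy orbits, naturally in $V$.

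Second, since $\Sigma^\infty_+$ is a left adjoint and therefore preserves colimits, in particular homotopy orbits, the above yields a natural equivalence of unitary functors
\[
    \Sigma^\infty_+\bigl(\Gammahol(\cL \otimes V - 0)/\bC^\times\bigr) \simeq \bigl(\Sigma^\infty_+\Gammahol(\cL \otimes V - 0)\bigr)_{hU(1)} = \bigl(\Sigma^\infty_+\Phi(V)\bigr)_{hU(1)},
\]
where, by the remark recorded immediately before the statement, the $U(1)$-action on the right-hand side is exactly the canonical $U(1)$-action on the unitary functor $\Sigma^\infty_+\Phi$. Now \cref{theorem:holomorphic-polynomiality} asserts that $\Sigma^\infty_+\Phi$ is $N$-polynomial, and \cref{lemma:natural-u1-action} asserts that passing to homotopy orbits for the canonical $U(1)$-action preserves $N$-polynomiality. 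Combining these two facts with the equivalence above concludes the argument.

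The only step requiring any real care — and the closest thing to an obstacle in an otherwise formal argument — is verifying freeness of the $\bC^\times$-action together with the naturality in $V$ of the chain of equivalences above, so that $N$-polynomiality may legitimately be transported across it. Everything else is a direct citation of the two preceding statements.
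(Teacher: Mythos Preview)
Your proof is correct and follows essentially the same approach as the paper: identify the scalar quotient with the $U(1)$-homotopy orbits via freeness of the action and the equivalence $U(1) \simeq \bC^\times$, then apply \cref{theorem:holomorphic-polynomiality} together with \cref{lemma:natural-u1-action}. You have simply been a bit more explicit about the intermediate steps (the splitting $\bC^\times \cong U(1) \times \bR_{>0}$ and the fact that $\Sigma^\infty_+$ commutes with homotopy orbits) than the paper's terse version.
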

\begin{proof}
The $\bC^\times$-action is free so the orbits are homotopy orbits. Combined with the homotopy equivalence $U(1) \simeq \bC^\times$, we get
\[
    \Gammahol(\cL \otimes V -0)/\bC^\times \simeq \Gammahol(\cL \otimes V -0)_{hU(1)}
\]
and the result follows from \cref{theorem:holomorphic-polynomiality}.
\end{proof}

\subsubsection{The Picard parameter}\label{subsubsection:picard-parameter}

Let $\alpha \in H^2(X;\bZ)$ and let $n = \dim_\bC X$. The space of holomorphic maps of degree $\alpha$ maps down to the Picard variety $\Pic^\alpha(X)$ of isomorphism classes of holomorphic line bundles of first Chern class $\alpha$:
\[
    \Hol_\alpha(X,\bP(\bC^{n+1} \oplus V)) \overset{\pi}{\lra} \Pic^\alpha(X)
\]
with fibre above $[\cL] \in \Pic^\alpha(X)$ the section space $\Gammahol(\cL \otimes (\bC^{n+1} \oplus V) - 0) / \bC^\times$. Notice that we have shifted our unitary functor by $\bC^{n+1} \oplus -$ to avoid empty spaces for convenience, but this has little effect on the Weiss tower as explained in \cref{lemma:shifted-tower}. We would like to view the space of holomorphic maps as a parameterised unitary functor over $\Pic^\alpha(X)$ and apply the results just established in combination with those of \cref{subsection:parameterised-unitary-calculus}. Unfortunately the projection map $\pi$ is not a fibration, and one would thus need to first replace it by a fibration to obtain the right parameterised functor. However the outcome of such a procedure is a functor that is only related to the space of holomorphic maps in a range, which is not enough to understand the full Weiss tower. Instead, we shall explain how to adapt the proof of \cref{theorem:holomorphic-polynomiality} to incorporate variations of $\cL$ in $\Pic^\alpha(X)$. As this extra complication introduces even more variables, we will be content with only indicating the most salient modifications.
\begin{theorem}\label{theorem:final-holomorphic-polynomiality}
Let $X$ be a connected smooth projective complex variety of dimension $n$. Let $\alpha \in H^2(X;\bZ)$ be such that $\alpha - c_1(K_X)$ is ample. Let $N = \dim H^0(X,\cL)$ where $\cL$ is any\footnote{By the Kodaira vanishing theorem this constant $N$ does not depend on the chosen $\cL$ with $c_1(\cL) = \alpha$.} very ample holomorphic line bundle with $c_1(\cL) = \alpha$. Then the unitary functor
\[
    V \longmapsto \Sigma^\infty_+ \Hol_\alpha(X,\bP(\bC^{n+1} \oplus V))
\]
is $N$-polynomial.
\end{theorem}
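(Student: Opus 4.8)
The plan is to follow the architecture of the proof of \cref{theorem:holomorphic-polynomiality}, but now working in the parameterised setting over $B = \Pic^\alpha(X)$. First I would observe that, since $\Pic^\alpha(X)$ is a compact smooth manifold (a complex torus), the hypothesis that $\alpha - c_1(K_X)$ is ample forces every line bundle $\cL$ with $c_1(\cL) = \alpha$ to be very ample — this is where the Kodaira vanishing / Serre theorem input enters, guaranteeing both that $N = \dim H^0(X, \cL)$ is independent of the chosen $\cL$ and that the embedding \eqref{equation:ample-embedding} exists for each fibre. Consequently the family version of \eqref{equation:correspondence-sections-linear-maps} makes sense: over $\Pic^\alpha(X)$ there is a rank-$N$ vector bundle $\cE$ (a twist of the pushforward of a Poincaré bundle) whose fibre over $[\cL]$ is $H^0(X,\cL)^\vee$, together with a family of embeddings $\iota_{[\cL]} \colon X \hookrightarrow \bP(\cE_{[\cL]})$, and the total space of holomorphic maps becomes
\[
    \Hol_\alpha(X,\bP(\bC^{n+1}\oplus V)) \cong \set{(\, [\cL], A \,)}{A \in \Hom(\cE_{[\cL]}, \bC^{n+1}\oplus V),\ \bP(\ker A)\cap \iota_{[\cL]} X = \emptyset}\big/\bC^\times,
\]
viewed as a parameterised unitary functor $\catJ \to \catSp^{\Pic^\alpha(X)}$ after applying $\Sigma^\infty_+$, then post-composed with $\colim_{\Pic^\alpha(X)}$.

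Next I would run the rank stratification fibrewise: the dimension of $\ker A$ is locally constant in $V$ and well-defined over the whole family, so $\Phi(V)$ (now the parameterised functor above) decomposes into strata $\Phi_r(V)$ with the same homotopy-pushout squares relating sphere bundles, disc bundles and strata. By \cref{lemma:polynomial-colimit-is-polynomial} and the observation that a finite-limit-preserving $\colim$ over $\Pic^\alpha(X)$ preserves $N$-polynomiality (the lemmas at the end of \cref{subsection:parameterised-unitary-calculus}), it suffices to prove that each $\Sigma^\infty_+ \Phi_r(V)$ and each $\Sigma^\infty_+ S(\nu_r(V))$ is $N$-polynomial \emph{as a parameterised functor}, and by \cref{lemma:fibrewise-polynomiality} this is equivalent to checking $N$-polynomiality of each restriction to a point $[\cL] \in \Pic^\alpha(X)$. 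But restricting to a point recovers exactly the unparameterised situation already handled in the proof of \cref{theorem:holomorphic-polynomiality}: \cref{lemma:fibrewise-functor} identifies $\Phi_r^K(V)$ with a Stiefel manifold, \cref{lemma:main-disc-polynomial} gives $(N-r)$-polynomiality via Miller's splitting, and \cref{corollary:induction-pushout-sphere-polynomial} together with \cref{lemma:induction-disc-polynomial,lemma:induction-sphere-polynomial} handles the normal sphere bundles. Finally I would fold in the scalar $\bC^\times$-action exactly as in \cref{subsubsection:canonical-u1-action}: the $U(1)$-action is the canonical one on the unitary functor, the action is free so orbits are homotopy orbits, and \cref{lemma:natural-u1-action} transports $N$-polynomiality through $(-)_{hU(1)}$; this commutes with the $\colim$ over $\Pic^\alpha(X)$ since homotopy orbits are a colimit.

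The main obstacle I expect is purely bookkeeping rather than conceptual: setting up the parameterised correspondence cleanly, i.e. constructing the vector bundle $\cE$ and the family of very ample embeddings $\iota_{[\cL]}$ over $\Pic^\alpha(X)$ in a way that is manifestly functorial in $V$, and then carrying the nested rank stratifications of \cref{lemma:induction-sphere-polynomial} through the parameterised setting without the notation collapsing. The key simplification — and the reason the argument goes through at all — is \cref{lemma:fibrewise-polynomiality}: polynomiality of a parameterised functor can be checked one fibre at a time, so after the family has been set up correctly, \emph{no new homotopy-theoretic input is needed} beyond what was already proved for a single $\cL$. For this reason I would, as the authors indicate, only spell out the construction of the family and the fibrewise reduction, and refer back to \cref{theorem:holomorphic-polynomiality} and \cref{subsubsection:canonical-u1-action} for the rest.
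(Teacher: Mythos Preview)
Your proposal follows essentially the same architecture as the paper's proof: construct the family via a Poincar\'e bundle, identify $\Hol_\alpha$ with a space of linear maps satisfying a kernel condition, run the rank stratification, reduce to fibrewise polynomiality via \cref{lemma:fibrewise-polynomiality}, and finish with the $U(1)$-quotient via \cref{lemma:natural-u1-action}. The key lemmas you cite are exactly the ones the paper uses.

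There is one subtlety you gloss over that the paper confronts explicitly. You propose to view the whole functor as a parameterised unitary functor $\catJ \to \catSp^{\Pic^\alpha(X)}$ and then post-compose with $\colim_{\Pic^\alpha(X)}$. The paper warns, in the paragraph immediately preceding the theorem, that the projection $\pi \colon \Hol_\alpha(X,\bP(\bC^{n+1}\oplus V)) \to \Pic^\alpha(X)$ is \emph{not} a fibration, so interpreting it as an object of $\catS^{\Pic^\alpha(X)} = \catFun(\Pic^\alpha(X),\catS)$ requires a fibrant replacement whose total space need not agree with $\Hol_\alpha$ on the nose---only in a range---which is not enough to conclude polynomiality of the original functor. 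The paper therefore does \emph{not} parameterise the whole space over $\Pic^\alpha(X)$. Instead it stratifies the total space $\Psi(V)$ by rank first, and only then invokes \cref{lemma:fibrewise-polynomiality}, parameterising each stratum $\Psi_r(V)$ over the Grassmannian bundle $\Grass(r,(p_*\cP)^\vee;\epsilon)$ via the kernel map---and \emph{that} map is a genuine fibre bundle with Stiefel fibres, so the parameterised machinery applies cleanly. This reordering is the actual content beyond \cref{theorem:holomorphic-polynomiality}, and it is precisely the ``bookkeeping'' obstacle you anticipated; the fix is to change the base over which you parameterise.
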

\begin{proof}
Let $\cP$ be a Picard line bundle on $\Pic^\alpha(X) \times X$, see \cite[Exercise~4.3]{Kleiman}. Let $p \colon \Pic^\alpha(X) \times X \to \Pic^\alpha(X)$ be the first projection. By cohomology and base change, together with the Kodaira vanishing theorem and our assumption on the ampleness of $\alpha - c_1(K_X)$, the pushed forward sheaf $p_*\cP$ is a vector bundle. There is a natural embedding
\[
    \Pic^\alpha(X) \times X \overset{\epsilon}{\longhookrightarrow} \bP\big( (p_*\cP)^\vee \big)
\]
generalising the embedding~\eqref{equation:ample-embedding} over $\Pic^\alpha(X)$. As in \cref{equation:correspondence-sections-linear-maps}, we observe that
\[
    \Hol_\alpha(X,\bP(\bC^{n+1} \oplus V)) \cong \set{ ([\cL], A) \in \underline{\Hom}((p_*\cP)^\vee, \underline{\bC^{n+1} \oplus V})}{\bP(\ker A) \cap \epsilon(\{[\cL]\} \times X) = \emptyset} / \bC^\times.
\]
Let us write
\[
    \Psi(V) = \set{ ([\cL], A) \in \underline{\Hom}((p_*\cP)^\vee, \underline{\bC^{n+1} \oplus V})}{\bP(\ker A) \cap \epsilon(\{[\cL]\} \times X) = \emptyset}.
\]
By \cref{lemma:natural-u1-action}, our theorem follows if we simply show that
\[
    V \longmapsto \Sigma^\infty_+ \Psi(V)
\]
is $N$-polynomial. As in the proof of \cref{theorem:holomorphic-polynomiality}, the rank filtration produces a stratification by smooth submanifolds
\[
    \Psi(V) = \Psi_0(V) \cup \Psi_1(V) \cup \ldots \cup \Psi_N(V)
\]
with $\Psi_r(V) = \set{([\cL], A) \in \Psi(V)}{\dim \ker A = r}$. Let $\nu_r(V)$ be the normal bundle of $\Psi_r(V)$ inside $\Psi_{\leq r}(V) = \bigcup_{i=0}^r \Psi_i(V)$. There is a homotopy pushout square
\[
\begin{tikzcd}
S(\nu_r(V)) \arrow[d] \arrow[r] & D(\nu_r(V)) \simeq \Psi_r(V) \arrow[d] \\
\Psi_{<r}(V) \arrow[r]          & \Psi_{\leq r}(V)     
\end{tikzcd}
\]
We can show again by induction that $\Sigma^\infty_+\Psi_{\leq r}(V)$ is $N$-polynomial. More precisely, for $r=0,\dotsc,N$ let
\[
    \Grass(r, (p_*\cP)^\vee; \epsilon) = \set{([\cL], P) \in \Grass(r,(p_*\cP)^\vee)}{P \cap \epsilon(\{[\cL]\} \times X) = \emptyset)},
\]
an open subset of the Grassmannian bundle $\Grass(r,(p_*\cP)^\vee)$ on $\Pic^\alpha(X)$. The kernel map gives a fibre bundle
\[
    \Psi_r(V) \overset{\ker}{\lra} \Grass(r, (p_*\cP)^\vee; \epsilon)
\]
with fibre above $([\cL], P)$ the space of injective linear maps $\Mono(H^0(X,\cL)^\vee / P, \bC^{n+1} \oplus V)$. By applying \cref{lemma:main-disc-polynomial} to the fibres, \cref{lemma:fibrewise-polynomiality} shows that $\Sigma^\infty_+\Psi_r(V)$ is $N$-polynomial. Likewise for the sphere bundle: we apply \cref{lemma:induction-sphere-polynomial} to the fibres as done at the end of the proof of \cref{theorem:holomorphic-polynomiality}.
\end{proof}

\section{Goodwillie calculus for continuous sections}\label{section:continuous}

In this section, we only assume that $X$ is a finite CW complex. Our goal now is to study the continuous analogue of the unitary functor of the previous section. More precisely, let $\cL$ be a complex line bundle on $X$. We shall study the unitary functor of continuous sections
\[
    V \longmapsto \Sigma^\infty_+ \Gammacon(\cL \otimes V - 0).
\]
Our strategy is to write it as the composition
\[
    \catJ \lra \catS^X \lra \catSp
\]
of $V \mapsto \cL \otimes V - 0$ and $E \mapsto \Sigma^\infty_+\Gammacon(E)$. This allows us to first study the second functor using Goodwillie calculus, which has been more extensively developed in the literature, and then deduce consequences for the composite unitary functor.

We begin this section by some recollections on parameterised spectra, mostly to fix our notation. We then recall Malkiewich's work on section spaces and its consequences for the Goodwillie tower. Finally we pass from the Goodwillie tower to the Weiss tower.

\subsection{Reminders on parameterised spectra}

We recall a few basic facts about parameterised spectra using the $\infty$-categorical framework, but as we shall only use basic facts any model would do. We recommend \cite[Appendix]{Land} for a nice introduction. The category of parameterised spectra over a space $B$ is the functor category $\catSp^B = \catFun(B, \catSp)$. It inherits a symmetric monoidal structure $-\otimes-$ from the category of spectra. The adjunction $\Sigma^\infty \colon \catS_* \rightleftharpoons \catSp \colon \Omega^\infty$ yields an adjunction 
\[
\begin{tikzcd}
{\Sigma^\infty_B \colon \catS_*^B = \catFun(B, \catS_*)} \arrow[r, "\Sigma^\infty_B", shift left] & {\catFun(B,\catSp) = \catSp^B : \Omega^\infty_B} \arrow[l, "\Omega^\infty_B", shift left]
\end{tikzcd}
\]
by post-composition of functors. Given a map of spaces $f \colon A \to B$, there are three functors $f^*, f_!, f_*$ given respectively by restriction and left and right Kan extension. They form a pair of adjunctions $f_! \dashv f^*$ and $f^* \dashv f_*$:
\[
\begin{tikzcd}
\catSp^A \arrow[r, "f_!", shift left=3] \arrow[r, "f_*"', shift right=3] & \catSp^B \arrow[l, "f^*" description]
\end{tikzcd}
\]
For example, if $G$ is an $\bE_1$-group in spaces and $r \colon BG \to *$ is the unique map to a point, then any spectrum $E \in \catSp^{BG}$ can be seen a spectrum with a $G$ action and $r_!(E)$ is the homotopy orbits spectrum. We will often write $r_!(E) \simeq E_{hG}$. More generally, if $f \colon G \to H$ is a map of $\bE_1$-groups, we will write $\Ind_G^H$ for the functor $(Bf)_! \colon \catSp^{BG} \to \catSp^{BH}$.
We will later need to use the external tensor product of parameterised spectra. If $E \in \catSp^A$ and $F \in \catSp^B$ are parameterised spectra, their external tensor product is the parameterised spectrum on $A \times B$ given by
\[
    E \boxtimes F \colon A \times B \overset{E \times F}{\lra} \catSp \times \catSp \overset{\otimes}{\lra} \catSp.
\]
This generalises directly to finitely many tensor factors. In particular the $k$-fold external tensor power $E^{\boxtimes k}$ is a parameterised spectrum over the product space $B^k$. In that particular case where all the factors are equal, there is an obvious $\Sigma_k$ action by permuting the factors and this construction can be refined to a parameterised spectrum over $B^k$ with a $\Sigma_k$ action. In other words $E^{\boxtimes k}$ can be lifted to a parameterised spectrum over the Borel construction $B^k \sslash \Sigma_k$. We shall use the same symbol for the lift.

\subsection{Recollections on Malkiewich's tower for section spaces}

We recall here and specialise to our situation the results of \cite[Section~3]{Malkiewich}. Let $E \in \catS_*^X$ and consider the functor
\begin{align*}
    G_E \colon (\catS^X)^\opp &\lra \catSp \\
    (B \overset{p}{\to}X) &\longmapsto \Sigma^\infty_+ \Gammacon(p^*E \to B)
\end{align*}
where we write $p$ for the structure map of the space $B$ over $X$. In \cite[Theorem~1]{Malkiewich}, Malkiewich constructs a tower of functors 
\[
    G_E \lra \cdots \lra A_kG_E \lra A_{k-1}G_E \lra \cdots \lra A_1G_E \lra A_0G_E
\]
functorially in $E$. (More generally, the tower is functorial in functors from $(\catS^X)^\opp$ to $\catSp$.) The term $A_kG_E$ stands for the $k$th \emph{a}pproximation of $G_E$ and is a $k$-excisive functor in the sense of \cite[Definition~2.2]{Malkiewich}. (We shall not need the general definition and will only state its consequences later as needed.)

The functors $A_kG_E$ have very explicit formulas which we will shortly reproduce from \cite[Section~3]{Malkiewich} after introducing some notation. Let $\underline{i} = \{1,2,\ldots,i\}$ and write $\catEpi_k$ for the category with objects $\underline{i}$ for $0 \leq i \leq k$ (by convention $\underline{0} = \emptyset$) and morphisms the surjective maps. Given $B \in \catS^X$ a space over $X$, its external smash product $B \overline{\wedge} B$ is the space over $X \times X$ whose fibre over $(x_1,x_2)$ is the smash product $B_{x_1} \wedge B_{x_2}$. More generally, the iterated external smash product $B^{\overline{\wedge}i}$ is a space over $X^i$. In the formula that follows, we implicitly replace $B$ and $E$ by homotopy equivalent spaces satisfying the (co)fibrancy conditions explained in \cite[Definitions~3.1, 3.2, 3.3]{Malkiewich}. Define
\[
    \Map_{(\catEpi_k^\opp, \{X^\bullet\})}(B^{\overline{\wedge} \bullet}, \Sigma^\infty_{X^\bullet}E^{\overline{\wedge} \bullet})
\]
to be the sequential spectrum whose $m$th level is the space of collections of maps\footnote{See \cite[Remark~3.5]{Malkiewich} concerning $f_0$. We deviate slightly from \cite[Section~3]{Malkiewich} as we consider the suspension with added basepoint $\Sigma^\infty_+$ instead of $\Sigma^\infty$.}
\[
    (f_0, \dotsc, f_n) \quad \text{with} \ f_i \colon B^{\overline{\wedge} i} \to \Sigma^m_{X^i}E^{\overline{\wedge} i} \text{ over } X^i
\]
such that each surjective map $\underline{j} \twoheadrightarrow \underline{i}$ in $\catEpi_k$ gives a commuting square
\[
\begin{tikzcd}
\Sigma^k_{X^i}E^{\overline{\wedge} i} \arrow[r]    & \Sigma^k_{X^j}E^{\overline{\wedge} j}     \\
B^{\overline{\wedge} i} \arrow[r] \arrow[u, "f_i"] & B^{\overline{\wedge} j} \arrow[u, "f_j"']
\end{tikzcd}
\]
The upshot of \cite[Section~3]{Malkiewich} is summarised in our case by:
\begin{theorem}[{\cite[Theorem~3.16]{Malkiewich}}]\label{theorem:malkiewich-tower}
The $k$th approximation of $G_E$ is given by
\[
    A_kG_E(B) \simeq \Map_{(\catEpi_k^\opp, \{X^\bullet\})}(B^{\overline{\wedge} \bullet}, \Sigma^\infty_{X^\bullet}E^{\overline{\wedge} \bullet}).
\]
\end{theorem}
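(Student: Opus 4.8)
This is \cite[Theorem~3.16]{Malkiewich}, so strictly speaking there is nothing to prove and we only recall the idea. Abbreviate the right-hand side by $P_k(B) = \Map_{(\catEpi_k^\opp, \{X^\bullet\})}(B^{\overline\wedge \bullet}, \Sigma^\infty_{X^\bullet}E^{\overline\wedge \bullet})$, a functor $(\catS^X)^\opp \to \catSp$. The plan is to identify $P_k$ with the universal $k$-excisive approximation $A_kG_E$, which means establishing two things: that $P_k$ is $k$-excisive in the sense of \cite[Definition~2.2]{Malkiewich}, and that the natural map $G_E \to P_k$ becomes an equivalence after $k$-excisive approximation. The comparison map is transparent: a section $s\colon B \to E$ over $X$ is sent to the tuple $(f_0,\dotsc,f_k)$ with $f_i = \Sigma^\infty_{X^i}(s^{\overline\wedge i})$ the stabilised $i$-fold external smash power of $s$, a map $B^{\overline\wedge i} \to \Sigma^\infty_{X^i}E^{\overline\wedge i}$ over $X^i$; the square attached to a surjection $\underline{j} \twoheadrightarrow \underline{i}$ commutes because $s^{\overline\wedge j}$ pulled back along the corresponding partial diagonal $X^i \to X^j$ is $s^{\overline\wedge i}$, and this is exactly the compatibility that the homotopy limit over $\catEpi_k^\opp$ enforces.

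For the first point, the key input is that the stabilised $i$-fold external smash power $B \mapsto \Sigma^\infty_{X^i}B^{\overline\wedge i}$, as a parametrised functor over $X^i$, is $i$-excisive — its $(i+1)$-st cross-effect vanishes because $i$ smash factors cannot detect $i+1$ independent wedge summands — hence a fortiori $k$-excisive for all $i \le k$. Mapping each of these into the fixed parametrised spectrum $\Sigma^\infty_{X^i}E^{\overline\wedge i}$ and assembling the result via the homotopy limit over the finite category $\catEpi_k^\opp$, which involves no objects $\underline{i}$ with $i > k$, produces a functor all of whose higher cross-effects above degree $k$ vanish, i.e.\ a $k$-excisive functor. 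The homotopy-invariance of the external smash powers and of the totalisation, which is what makes $P_k$ well-defined in the first place, is precisely what forces the (co)fibrancy hypotheses on $B$ and $E$ of \cite[Definitions~3.1, 3.2, 3.3]{Malkiewich}.

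For the second point one compares the two towers layer by layer: one checks that $\mathrm{fib}(P_k \to P_{k-1})$ is $k$-homogeneous with the expected $k$-th cross-effect and that $G_E \to P_k$ realises on $k$-th cross-effects the evident comparison, which is an equivalence since the fibre of $E \to X$ is connected and $G_E$ is therefore suitably analytic; an induction over the tower and a passage to the limit then finish. The main obstacle — and the reason this is genuinely Malkiewich's theorem rather than a formal corollary of Goodwillie's machinery — is the bookkeeping of the parameter spaces: one must organise the parametrised spectra $\Sigma^\infty_{X^i}E^{\overline\wedge i}$ over the varying powers $X^i$ into a strict diagram indexed by $\catEpi_k^\opp$, with structure maps along the partial diagonals of $X$, and form its homotopy limit while keeping everything point-set homotopy-invariant. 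Since we use the theorem only through the displayed formula and the tower it produces, we will not reproduce these details and simply refer to \cite[Section~3]{Malkiewich}.
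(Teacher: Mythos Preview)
The paper does not prove this statement at all: it is quoted verbatim from \cite[Theorem~3.16]{Malkiewich} as a recollection, with no argument supplied. Your proposal correctly recognises this and offers a sketch of the underlying strategy, which is more than the paper itself provides.

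Your outline is broadly sound --- define the comparison map via external smash powers of a section, verify $k$-excisiveness of $P_k$ from that of $B \mapsto \Sigma^\infty_{X^i}B^{\overline\wedge i}$, then identify $A_kG_E$ with $P_k$ layer by layer --- but one step is misjustified. You write that the comparison on $k$th cross-effects is an equivalence ``since the fibre of $E \to X$ is connected and $G_E$ is therefore suitably analytic''. This is not the right reason: analyticity governs convergence of the tower $G_E \to \lim_k A_kG_E$, not the identification of a single stage $A_kG_E \simeq P_k$, which is a purely formal statement about the universal $k$-excisive approximation and holds without any connectivity hypothesis on $E$. What actually makes the cross-effect comparison an equivalence is a direct computation: the $k$th cross-effect of $G_E$ is, after stabilisation, the $k$-fold external smash of sections, and this matches the $k$th layer of $P_k$ by construction. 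Since you explicitly defer the details to Malkiewich anyway, this is a minor wrinkle rather than a real gap, but the sentence as written would mislead a reader.
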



As a consequence, we obtain a formula for the layers of Malkiewich's tower evaluated at $X$:
\begin{corollary}[Malkiewich]\label{corollary:malkiewich-layers}
Let $E \in \catS_*^X$ and write $(\Sigma^\infty_X E)^{\boxtimes k} \in \catSp^{X^k \sslash \Sigma_k}$ for its external tensor power with its natural permutation action. Let $\omega$ denote the composition $\Conf_k(X) \hookrightarrow X^k \to X^k \sslash \Sigma_k$. Then there is an equivalence of spectra
\[
    \mathrm{fib}(A_kG_E \to A_{k-1}G_E)(X) \simeq \big( \Sigma^\infty_X E^{\boxtimes k} \otimes \bS^{-TX^k} \otimes \omega_!\bS^0 \big)_{h(\Omega X \wr \Sigma_k)}
\]
where $\Omega X \wr \Sigma_k$ acts obviously on each factor hence diagonally on the tensor product.
\end{corollary}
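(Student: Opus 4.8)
The plan is to extract the layer directly from Malkiewich's explicit model (\cref{theorem:malkiewich-tower}) by isolating the contribution of the top object $\underline{k}\in\catEpi_k$, and then specialising to the terminal object $B=X$ of $\catS^X$ by parameterised Atiyah duality. By the general properties of this tower \cite{Malkiewich}, the layer $\mathrm{fib}(A_kG_E\to A_{k-1}G_E)$ is a $k$-homogeneous functor of $B$, hence is governed by a single $k$th derivative — a $\Sigma_k$-equivariant parameterised spectrum — via an $\Sigma_k$-homotopy-orbit formula. So it suffices to identify that derivative and to plug in $B=X$.

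To identify it, one unwinds the $\catEpi_k^\opp$-limit of \cref{theorem:malkiewich-tower}: its $\underline{i}$-term is the mapping spectrum $\mathrm{map}_{\catSp^{X^i}}(\Sigma^\infty_{X^i}B^{\overline{\wedge}i},(\Sigma^\infty_X E)^{\boxtimes i})$ over $X^i$, with transition maps along the diagonals $X^i\hookrightarrow X^j$ built from the comultiplication $\Sigma^\infty_X E\to(\Sigma^\infty_X E)^{\otimes 2}$. The fibre of $A_kG_E\to A_{k-1}G_E$ retains only the part of the $\underline{k}$-term that does not already appear at some $\underline{i}$ with $i<k$; reducing along all the coface maps kills precisely the part of $(\Sigma^\infty_X E)^{\boxtimes k}$ supported over the fat diagonal of $X^k$, because whenever two coordinates coincide the comultiplication exhibits that contribution as one coming from a lower stratum. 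What survives is supported on the open configuration locus $j\colon\Conf_k(X)\hookrightarrow X^k$, with its permutation $\Sigma_k$-action, so the $k$th cross-effect of $G_E$ is $(\Sigma^\infty_X E)^{\boxtimes k}$ restricted to $\Conf_k(X)$ — equivalently $(\Sigma^\infty_X E)^{\boxtimes k}\otimes\omega_!\bS^0$ over $X^k\sslash\Sigma_k$, by the projection formula for $\omega_!$.

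It remains to evaluate the homogeneous formula at $B=X$. Since the fibres of $X\xrightarrow{\mathrm{id}}X$ are points, $\Sigma^\infty_{X^k}X^{\overline{\wedge}k}\simeq r^*\bS$ is the constant sphere spectrum over $X^k$, and the value of the layer at $X$ is the $\Sigma_k$-equivariant spectrum of global sections of the cross-effect over $\Conf_k(X)$. Now I would apply parameterised (Costenoble--Waner) Atiyah duality over the compact finite complex $X^k$: the projection $r\colon X^k\to\ast$ is $\bS$-dualizable, so $r_*\simeq r_!\big(-\otimes\bS^{-TX^k}\big)$ with $\bS^{-TX^k}$ the fibrewise dualizing spectrum of $X^k$ (the Thom spectrum of $-TX^k$ when $X$ is a manifold), and, keeping the $\Sigma_k$-action throughout, $\Sigma_k$-equivariant sections turn into $\Sigma_k$-homotopy orbits. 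As $\Conf_k(X)\subset X^k$ is open, $\bS^{-TX^k}$ restricts to its dualizing spectrum, and combining this with $\Omega(X^k\sslash\Sigma_k)\simeq\Omega X\wr\Sigma_k$ — so that $r_!$ over $X^k\sslash\Sigma_k$ is exactly $(-)_{h(\Omega X\wr\Sigma_k)}$ — yields
\[
    \mathrm{fib}(A_kG_E\to A_{k-1}G_E)(X)\simeq\big(\Sigma^\infty_X E^{\boxtimes k}\otimes\bS^{-TX^k}\otimes\omega_!\bS^0\big)_{h(\Omega X\wr\Sigma_k)}.
\]
I expect the main obstacle to be making this final duality step precise and genuinely $\Sigma_k$-equivariant over a base that need only be a finite complex: this is what converts the limit-flavoured $\Sigma_k$-fixed points into the homotopy orbits of the statement, and it is where both the Thom twist $\bS^{-TX^k}$ and the configuration space enter. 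Since most of the requisite apparatus is already developed in \cite[Section~3]{Malkiewich}, the argument should reduce to specialising that discussion and carefully bookkeeping the group actions.
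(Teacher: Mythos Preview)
Your approach is conceptually sound but takes a genuinely different route from the paper. The paper does not rederive the layer formula at all: it simply takes Malkiewich's already-computed expression (the Thom spectrum $\big(((E^{\overline{\wedge}k})|_{\Conf_k(X)})_{\Sigma_k}\big)^{-T\UConf_k(X)}$ from \cite[p.~544]{Malkiewich}) as a black box and translates it into the $\infty$-categorical parameterised language via repeated applications of the projection formula for the maps $\omega$, $p$, $r$, $q$. No cross-effects, no Atiyah duality, no analysis of the $\catEpi_k^\opp$-limit---just bookkeeping of pushforwards and pullbacks.

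By contrast, you are essentially reproving Malkiewich's layer identification from scratch: isolating the top object of $\catEpi_k$, recognising the cross-effect as supported on the configuration locus, and then invoking equivariant parameterised duality to convert sections/fixed points into homotopy orbits with the Thom twist. This is a legitimate strategy and is indeed the content of \cite[Section~3]{Malkiewich}, but it is considerably more work and, as you yourself flag, the delicate point is making the duality step genuinely $\Sigma_k$-equivariant over a base that is only a finite complex. The paper's approach sidesteps all of this by citing the endpoint and only doing the formal translation; yours has the advantage of being self-contained and explaining \emph{why} the configuration space and $\bS^{-TX^k}$ appear, but at the cost of reproducing a nontrivial argument already in the literature. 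For the purposes of this corollary---attributed to Malkiewich---the paper's translation-only proof is the appropriate one.
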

\begin{proof}
We need to explain how to translate from Malkiewich's notation into our own: it is simply a manipulation of functors in parameterised spectra. Let $p \colon \Conf_k(X) \to \UConf_k(X) = \Conf_k(X) / \Sigma_k$ be the projection, and $r \colon \UConf_k(X) \to *$ and $q \colon X^k \sslash \Sigma_k \to *$ be the unique maps to a point. The formula given at the top of page~544 in \cite{Malkiewich} is the Thom spectrum
\[
    \big( ((E^{\overline{\wedge} k})|_{\Conf_k(X)})_{\Sigma_k} \big)^{-T\UConf_k(X)}
\]
where $(-)|_{\Conf_k(X)}$ denotes the restriction along the inclusion $\Conf_k(X) \hookrightarrow X^k$. The external smash power $E^{\overline{\wedge} k}$ is an object of $\catS_*^{X^k \sslash \Sigma_k}$ (i.e. a parameterised pointed space with a $\Sigma_k$ action). Restriction to $\Conf_k(X)$ is implemented by $\omega^*(-)$ and taking orbits is implemented by $p_!(-)$, that is:
\[
    ((E^{\overline{\wedge} k})|_{\Conf_k(X)})_{\Sigma_k} \simeq p_!(\omega^*E^{\overline{\wedge} k})
\]
in our notation. The Thom spectrum is computed using $r_!(-)$, hence Malkiewich's formula translated into our notation is
\[
    \big( ((E^{\overline{\wedge} k})|_{\Conf_k(X)})_{\Sigma_k} \big)^{-T\UConf_k(X)} \simeq r_!\big(p_!\omega^*(\Sigma^\infty_X E)^{\boxtimes k} \otimes \bS^{-T\UConf_k(X)}\big).
\]
Now $p^*\bS^{-T\UConf_k(X)} \simeq \bS^{-T\Conf_k(X)}$ so the projection formula gives
\begin{align*}
    r_!\big(p_!\omega^*(\Sigma^\infty_X E)^{\boxtimes k} \otimes \bS^{-T\UConf_k(X)}\big) &\simeq r_!p_!\big(\omega^*(\Sigma^\infty_X E)^{\boxtimes k} \otimes p^*\bS^{-T\UConf_k(X)}\big) \\
    &\simeq r_!p_!\big(\omega^*(\Sigma^\infty_X E)^{\boxtimes k} \otimes \bS^{-T\Conf_k(X)}\big).
\end{align*}
We observe furthermore that $\bS^{-T\Conf_k(X)} \simeq \omega^*\bS^{-TX^k}$ where $\bS^{-TX^k} \in \catSp^{X^k \sslash \Sigma_k}$ is seen with its $\Sigma_k$ action. Also $r \circ p = q \circ \omega$ is the unique map to a point, hence $r_!p_! = q_! \omega_!$. Combined with the projection formula this yields:
\begin{align*}
    r_!p_!\big(\omega^*(\Sigma^\infty_X E)^{\boxtimes k} \otimes \bS^{-T\Conf_k(X)}\big) &\simeq q_!\omega_!\big(\omega^*(\Sigma^\infty_X E)^{\boxtimes k} \otimes \omega^*\bS^{-TX^k}\big) \\
    &\simeq q_!\big(\Sigma^\infty_X E^{\boxtimes k} \otimes \bS^{-TX^k} \otimes \omega_!\bS^0\big).
\end{align*}
Finally, this last expression agrees with the desired formula: we use $q_!(-) \simeq (-)_{h(\Omega X \wr \Sigma_k)}$ recognising that $X^k \sslash \Sigma_k \simeq B(\Omega X \wr \Sigma_k)$.
\end{proof}

\begin{remark}
The spectrum $\omega_!\bS^0$ appearing in the formula of \cref{corollary:malkiewich-layers} sometimes goes under another name. It is indeed the suspension spectrum of the $\Omega X$-framed configuration space $\Conf_k^{\Omega X}(X)$, see e.g. \cite[Section~5.4]{HandbookPrimer}.
\end{remark}

\subsection{Recollections on Goodwillie calculus}

We briefly recall some results on Goodwillie calculus, mostly to fix our notation. We refer the unfamiliar reader to \cite{Handbook} for an introduction. Given a functor $F \colon \catS_*^X \to \catSp$ Goodwillie calculus provides a tower of functors
\[
    F \lra \cdots \lra P_kF \lra P_{k-1}F \lra \cdots \lra P_1F \lra P_0F.
\]
The zeroth approximation $P_0F$ is constant and each $P_kF$ is $k$-excisive in the sense of \cite[Definition~1.1.2]{Handbook}. The functor obtained as the fibre
\[
    D_kF = \mathrm{fib}(P_kF \lra P_{k-1}F)
\]
is called the $k$th layer of the Goodwillie tower. It is a $k$-homogeneous functor in the sense of \cite[Definition~1.2.2]{Handbook}. What makes the theory attractive and amenable to computations is the simple description of the layers.
\begin{proposition}\label{proposition:homogeneous-classification}
Let $F \colon \catS_*^X \to \catSp$ be a finitary functor, i.e. preserving all filtered colimits. There exists a spectrum $\partial_k F$ parameterised over $X^k \sslash \Sigma_k$ such that
\[
    D_kF(E) \simeq (\partial_kF \otimes \Sigma^\infty_X E^{\boxtimes k})_{h(\Omega X \wr \Sigma_k)}
\]
for all parameterised pointed spaces $E \in \catS_*^X$.
\end{proposition}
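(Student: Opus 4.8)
The plan is to obtain this as a special case of the general classification of homogeneous functors in Goodwillie calculus applied to the source $\catS_*^X$, after first understanding (multi)linear functors out of it. Note that $\catS_*^X$ is a compactly generated pointed $\infty$-category (a pointed $\infty$-topos), so the machinery of \cite{Handbook} applies to finitary functors $F\colon\catS_*^X\to\catSp$: the layer $D_kF=\mathrm{fib}(P_kF\to P_{k-1}F)$ is $k$-homogeneous, hence recovered from its $k$th cross-effect $\mathrm{cr}_k(D_kF)\colon(\catS_*^X)^{\times k}\to\catSp$ --- a symmetric functor, finitary and reduced $1$-excisive in each variable --- via $D_kF(E)\simeq\big(\mathrm{cr}_k(D_kF)(E,\dots,E)\big)_{h\Sigma_k}$. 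So the problem reduces to classifying finitary symmetric $k$-linear functors $(\catS_*^X)^{\times k}\to\catSp$.

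First I would treat the linear case. The stabilisation of $\catS_*^X$ is the $\infty$-category $\catSp^X$ of parameterised spectra, with $\Sigma^\infty_X$ the universal $1$-excisive functor, so finitary linear functors $\catS_*^X\to\catSp$ are the same as colimit-preserving (automatically exact) functors $\catSp^X\to\catSp$. Since $\catSp^X=\catFun(X,\catSp)$ is generated under colimits by the objects $(i_x)_!\bS$ for $x\in X$ and the restriction functors $i_x^*$ are jointly conservative, one has $\catFun^{\mathrm L}(\catSp^X,\catSp)\simeq\catSp^X$: a parameterised spectrum $M$ corresponds to the functor $N\mapsto q_!(M\otimes N)$, $q\colon X\to *$ the projection, which on the generators reads $(i_x)_!\bS\mapsto i_x^*$ by the projection formula. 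Thus a finitary linear $L\colon\catS_*^X\to\catSp$ is classified by some $M_L\in\catSp^X$ with $L(E)\simeq q_!(M_L\otimes\Sigma^\infty_X E)$. Iterating over $k$ variables, and using the external product equivalence $\catSp^X\otimes_{\catSp}\cdots\otimes_{\catSp}\catSp^X\simeq\catSp^{X^k}$ together with the permutation $\Sigma_k$-action, a finitary symmetric $k$-linear functor is classified by a parameterised spectrum $\partial_kF$ over $X^k\sslash\Sigma_k$, the associated functor sending $(E_1,\dots,E_k)$ to $q_!\big(\partial_kF\otimes(\Sigma^\infty_X E_1\boxtimes\cdots\boxtimes\Sigma^\infty_X E_k)\big)$ with $q\colon X^k\to *$.

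Combining this with the first paragraph gives $D_kF(E)\simeq\big(q_!(\partial_kF\otimes(\Sigma^\infty_X E)^{\boxtimes k})\big)_{h\Sigma_k}$, and folding the colimit $q_!$ over $X^k$ together with the subsequent $\Sigma_k$-orbits into a single colimit over $X^k\sslash\Sigma_k$ identifies the right-hand side with $(\partial_kF\otimes(\Sigma^\infty_X E)^{\boxtimes k})_{h(\Omega X\wr\Sigma_k)}$ --- using $X^k\sslash\Sigma_k\simeq B(\Omega X\wr\Sigma_k)$ when $X$ is connected, and in general reading $(-)_{h(\Omega X\wr\Sigma_k)}$ as shorthand for the colimit over $X^k\sslash\Sigma_k$; for $k=0$ the formula degenerates to the constant functor $D_0F(E)=F(*)$. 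The step requiring genuine care --- and the main obstacle --- is the parameterised stable homotopy theory bookkeeping: the identifications $\mathrm{Stab}(\catS_*^X)\simeq\catSp^X$, $\catFun^{\mathrm L}(\catSp^X,\catSp)\simeq\catSp^X$ and its external $k$-fold analogue, and verifying that the resulting homotopy colimit is precisely $(-)_{h(\Omega X\wr\Sigma_k)}$. None of this is deep (see \cite[Appendix]{Land} for the parameterised background), but it is the only point where the parameterised setting does more than import the classical $X=*$ statement verbatim; everything else is formal from the Goodwillie machinery recalled above.
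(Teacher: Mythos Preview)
Your proposal is correct and follows essentially the same approach as the paper: both invoke the general classification of homogeneous functors in Goodwillie calculus, specialized to the source $\catS_*^X$ via the compact generation of $\catSp^X$. The paper's proof is even terser than yours --- it simply cites the general classification together with the identification $\catSp^X \simeq \catMod_{\bS[\Omega X]}(\catSp)$ as compactly generated by a single object, and points to Ching's survey --- whereas you have unpacked the classification via cross-effects and the explicit description of linear functors out of $\catSp^X$; but the underlying argument is the same.
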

\begin{proof}
It follows from the general classification of homogeneous functors together with the fact that $\catSp^X \cong \catMod_{\bS[\Omega X]}(\catSp)$ is compactly generated by the single generator $\bS[\Omega X]$. See e.g. \cite[Section~1]{Ching}.
\end{proof}
In fact, a converse is true: any functor given by a formula
\[
    F(E) = (H \otimes \Sigma^\infty_X E^{\boxtimes k})_{h(\Omega X \wr \Sigma_k)}
\]
with some spectrum $H \in \catSp^{X^k \sslash \Sigma_k}$ is $k$-homogeneous. In what follows we shall use further results on Goodwillie calculus, which we will duly cite as needed. As a last piece of convention, we will assume that a generic functor $F$ is always assumed to be finitary.

\subsection{Calculus in two variables}

Goodwillie calculus is concerned with covariant functors from spaces, whereas Malkiewich's tower is for contravariant functors. We will explain how the towers agree in the very particular case of section spaces. Let
\begin{align*}
    \Gamma \colon (\catS^X)^\opp \times \catS_*^X &\lra \catSp \\
    (B,E) &\longmapsto \Sigma^\infty_+ \Gammacon(p^*E \to B), \ \text{ where } p \colon B \to X \text{ is the structure map}.
\end{align*}
\begin{proposition}[Malkiewich]\label{proposition:malkiewich-connectivity}
If $E \in \catS_*^X$ is $m$-connected, i.e. if the homotopy fibres of $E \to X$ are $(m-1)$-connected, then $\Gamma(-,E) \colon (\catS^X)^\opp \to \catSp$ is $m$-analytic with excess $0$ in Malkiewich's sense. In other words, if $B$ is a CW complex of dimension $d$ mapping to $X$ then the map
\[
    \Gamma(B,E) \lra A_k\Gamma(B,E)
\]
is $(k+1)(m-d)$-connected.
\end{proposition}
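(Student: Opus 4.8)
The plan is to read this off from the analyticity results of \cite[Section~3]{Malkiewich}, where it is proved; it is the section-space incarnation of the ``Calculus~II'' phenomenon that a functor built from a highly connected object is highly analytic \cite{Handbook}. What I would write is essentially a pointer to \cite{Malkiewich}, together with the following indication of why the numerics come out as stated.

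The mechanism is that the homotopy fibre of $\Gamma(B,E) \to A_k\Gamma(B,E)$ is governed by the layers of Malkiewich's tower in degrees $> k$. Concretely, iterating the fibre sequences $\mathrm{fib}\big(\Gamma(B,E) \to A_j\Gamma(B,E)\big) \to \mathrm{fib}\big(\Gamma(B,E) \to A_{j-1}\Gamma(B,E)\big) \to \mathrm{fib}\big(A_j\Gamma(-,E) \to A_{j-1}\Gamma(-,E)\big)(B)$ coming from the octahedral axiom, and using that the tower converges on a finite complex $B$, one gets that $\mathrm{fib}(\Gamma(B,E) \to A_k\Gamma(B,E))$ is at least as connected as the least connected layer $\mathrm{fib}(A_j \to A_{j-1})(B)$ with $j > k$. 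So the task reduces to bounding the connectivity of the $j$th layer on a $d$-dimensional $B$. The reduction to finite $B$ is harmless: both $\Gamma(-,E)$ and the $A_j\Gamma(-,E)$ send filtered colimits of spaces over $X$ to limits of spectra — a section out of a colimit is a limit, and the formula of \cref{theorem:malkiewich-tower} is continuous in $B$ — so a bound valid on each skeleton passes to the colimit.

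The $j$th layer on $B$ is computed by the same manipulation as in \cref{corollary:malkiewich-layers}, now carried out over $B$: up to a $\Sigma_j$-equivariance and a Thom twist by $-TB^j$ it is built out of the $j$-fold external smash power of $E$, restricted to $\Conf_j(B)$. Since $E$ is $m$-connected its fibres are $(m-1)$-connected, so the $j$-fold smash power has fibrewise connectivity about $jm$; the Thom twist by $-TB^j$, whose rank is at most $jd$ because $\dim_{\mathbb R} B \leq d$, lowers this by at most $jd$; and the remaining homotopy orbit construction only improves connectivity. Hence the $j$th layer on $B$ is about $j(m-d)$-connected, and for $j > k$ this is at least $(k+1)(m-d)$-connected. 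The point I expect to be delicate is the sharp bookkeeping of the additive constants — landing on exactly $(k+1)(m-d)$ rather than something weaker — together with the convergence of the tower on $B$: making both of these precise is exactly a Goodwillie-type analyticity theorem, and reproducing it from scratch would mean running the iterated Blakers--Massey estimates in the parameterised, contravariant section-space setting, so I would simply cite \cite[Section~3]{Malkiewich} for it.
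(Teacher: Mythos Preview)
Your proposal and the paper's proof both ultimately reduce to citing Malkiewich, so in spirit they agree. The paper's proof is much terser: it simply translates notation (the section space is $\Map_X(B \sqcup X, E)$ in Malkiewich's conventions, and the relevant ``relative dimension'' of $B \sqcup X$ is the CW dimension of $B$) and then invokes \cite[Proposition~9.3 and Theorem~9.4]{Malkiewich}. Note that the analyticity results live in Section~9 of \cite{Malkiewich}, not Section~3; Section~3 constructs the tower but the connectivity estimates you want are proved later.

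Your heuristic layer-by-layer argument is a reasonable explanation of the numerics, but it has a technical wrinkle: you invoke a ``Thom twist by $-TB^j$'', yet $B$ is only assumed to be a CW complex, not a manifold, so $TB$ need not exist. The layer formula of \cref{corollary:malkiewich-layers} is specific to evaluation at $X$; for a general $B$ over $X$, Malkiewich's layers are described differently (via the cosimplicial model in \cref{theorem:malkiewich-tower}), and the connectivity bound comes from cube estimates (iterated Blakers--Massey in the contravariant setting) rather than from reading off the rank of a tangent bundle. Since you already plan to defer the sharp bookkeeping to \cite{Malkiewich}, this is not fatal, but the heuristic as written would mislead a reader into thinking the layer description over general $B$ looks like the one over $X$. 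The cleanest fix is to do what the paper does: drop the heuristic and cite the precise statements in Section~9.
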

\begin{proof}
In Malkiewich's notation from \cite[p.~536]{Malkiewich} our section space is written $\Map_X(B \sqcup X, E) \cong \Gammacon(E)$. The relative dimension of $B \sqcup X$ defined in \cite[Definition~9.1]{Malkiewich} is the CW complex dimension of $B$ and the result follows by \cite[Proposition~9.3 and Theorem~9.4]{Malkiewich}.
\end{proof}

\begin{proposition}[Goodwillie]\label{proposition:goodwillie-connectivity}
There exists a constant $c \in \bZ$ such that the Goodwillie approximation map for the functor $\Gamma(X,-)$
\[
    \Gamma(X,E) \to P_k\Gamma(X,E)
\]
is $((k+1)(m - \dim X) - c)$-connected if $E \in \catS_*^X$ is $m$-connected (i.e. the homotopy fibres of $E \to X$ are $(m-1)$-connected).
\end{proposition}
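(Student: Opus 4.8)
\emph{Proof proposal.} The plan is to recognise Malkiewich's tower as the Goodwillie tower in the remaining variable. Concretely, write $A_k\Gamma(X,E) := A_kG_E(X)$ for the $k$th approximation of the contravariant functor $\Gamma(-,E)$ of \cref{theorem:malkiewich-tower}, evaluated at $B=X$ (with the identity structure map), and regard $E \mapsto A_k\Gamma(X,E)$ as a functor $\catS_*^X \to \catSp$. The first step is to check that this functor is finitary and $k$-excisive. Finitariness is immediate from the explicit mapping-spectrum description in \cref{theorem:malkiewich-tower} together with the compactness of $X$. For $k$-excisiveness I would induct on $k$: $A_0\Gamma(X,-)$ is constant, and by \cref{corollary:malkiewich-layers} the fibre $\mathrm{fib}(A_k\Gamma(X,-) \to A_{k-1}\Gamma(X,-))$ has the form $(H \otimes \Sigma^\infty_X E^{\boxtimes k})_{h(\Omega X \wr \Sigma_k)}$, hence is $k$-homogeneous by the converse to \cref{proposition:homogeneous-classification}; so $A_k\Gamma(X,-)$ sits in a fibre sequence over a $(k-1)$-excisive base with a $k$-excisive fibre, and is therefore $k$-excisive.

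Granting this, the universal property of the Goodwillie approximation yields, for each $k$, a canonical factorisation $\Gamma(X,-) \to P_k\Gamma(X,-) \xrightarrow{\theta_k} A_k\Gamma(X,-)$, compatible as $k$ varies. By \cref{proposition:malkiewich-connectivity} applied with $B=X$ (a finite CW complex of dimension $d=\dim X$), the composite $\Gamma(X,E) \to A_k\Gamma(X,E)$ is $(k+1)(m-\dim X)$-connected whenever $E$ is $m$-connected; in particular its connectivity grows linearly in $m$ with slope $k+1 \geq 1$. A standard estimate in Goodwillie calculus — $P_k$ is insensitive to the values of a functor on inputs of bounded connectivity, since each application of $T_k$ only involves the functor on strictly more connected objects — then shows that $\theta_k$ becomes an equivalence after applying $P_k$, and since both sides are already $k$-excisive, $\theta_k$ is itself an equivalence $P_k\Gamma(X,-) \simeq A_k\Gamma(X,-)$. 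Feeding this back, the Goodwillie approximation $\Gamma(X,E) \to P_k\Gamma(X,E)$ is identified with $\Gamma(X,E) \to A_k\Gamma(X,E)$, which is $(k+1)(m-\dim X)$-connected. This proves the proposition, in fact with $c=0$; the statement leaves $c$ unspecified only because that is all that will be needed later in combination with \cref{proposition:numerical-observation}.

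The step I expect to require the most care is the identification $P_k\Gamma(X,-) \simeq A_k\Gamma(X,-)$. Although \cref{corollary:malkiewich-layers} makes it very plausible that Malkiewich's tower \emph{is} the Goodwillie tower in the $E$-variable, one genuinely needs both ingredients: the homogeneous classification (to know each $A_k\Gamma(X,-)$ is $k$-excisive) and the fact that $P_k$ only sees a functor through highly connected inputs (to upgrade the comparison map from ``highly connected'' to ``equivalence''). A more hands-on alternative, avoiding the second ingredient, would be to compare derivatives directly: show that $\Gamma(X,-) \to A_k\Gamma(X,-)$ induces equivalences $\partial_j\Gamma(X,-) \simeq \partial_j A_k\Gamma(X,-)$ for $0 \leq j \leq k$ — the target computed from \cref{corollary:malkiewich-layers} and the source from \cref{proposition:homogeneous-classification} — and conclude by convergence of the finite Taylor tower of a $k$-excisive functor; but the connectivity route sketched above seems cleanest.
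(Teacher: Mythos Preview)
Your approach is correct but takes a genuinely different route from the paper's. The paper simply cites Goodwillie's foundational work: the functor $\Gamma(X,-)$ is stably $k$-excisive in the sense of \cite[Definition~4.1]{GoodwillieII}, and the connectivity estimate then follows from \cite[Propositions~1.4 and~1.5]{GoodwillieIII}. No mention of Malkiewich's tower is made at this stage.

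You instead establish the identification $P_k\Gamma(X,-) \simeq A_k\Gamma(X,-)$ first --- which the paper only proves \emph{after} this proposition, as a separate result invoking both \cref{proposition:malkiewich-connectivity} and the present \cref{proposition:goodwillie-connectivity} --- and then read off the connectivity from Malkiewich's estimate. Your argument is in fact slightly more economical in the global logic: it shows that the Goodwillie connectivity bound is not an independent input but a consequence of Malkiewich's bound together with the $k$-excisiveness of $A_k\Gamma(X,-)$ (the paper's \cref{lemma:malkiewich-k-is-polynomial}, whose proof you reproduce). In particular, you only need that $\Gamma(X,-) \to A_k\Gamma(X,-)$ is an order-$k$ agreement, which follows from \cref{proposition:malkiewich-connectivity} alone; the paper's later comparison instead works with the induced map $P_k\Gamma \to A_k\Gamma$ and therefore needs both connectivity statements. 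The paper's route has the advantage of keeping the proposition logically independent of the Malkiewich machinery and close to Goodwillie's original framework. The ``standard estimate'' you invoke (order-$k$ agreement implies $P_k$-equivalence) is itself a result from \cite{GoodwillieIII}, so both arguments ultimately rest on the same part of Goodwillie's calculus, applied at different points.
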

\begin{proof}
See \cite[Definition~4.1]{GoodwillieII} for the definition of stable $k$th order excision and \cite[Propositions~1.4 and~1.5]{GoodwillieIII} for its usage in the proof of the result.
\end{proof}

\begin{lemma}\label{lemma:malkiewich-k-is-polynomial}
The functor $E \mapsto A_k\Gamma(X,E)$ is $k$-polynomial in the sense of Goodwillie.
\end{lemma}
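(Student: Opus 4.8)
The plan is to argue by induction on $k$, using that the $k$-excisive functors $\catS_*^X \to \catSp$ form a stable subcategory of $\catFun(\catS_*^X, \catSp)$ (being closed under finite limits), so in particular are closed under extensions, together with the explicit description of the layers of Malkiewich's tower. For the base case $k = 0$, the tower of \cref{theorem:malkiewich-tower} starts with the constant functor: unwinding the formula over $\catEpi_0$, whose only object is $\underline 0 = \emptyset$, gives $A_0G_E(B) \simeq \bS$ for all $B$ and all $E$. A constant functor is $0$-excisive, so $E \mapsto A_0\Gamma(X, E)$ is $0$-polynomial in the sense of Goodwillie.

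For the inductive step, suppose $A_{k-1}\Gamma(X, -)$ is $(k-1)$-polynomial, hence a fortiori $k$-polynomial. Malkiewich's tower is natural in $E \in \catS_*^X$, so evaluating the fibre sequence $\mathrm{fib}(A_kG_E \to A_{k-1}G_E) \to A_kG_E \to A_{k-1}G_E$ at $B = X$ (and using that evaluation at $X$ preserves fibres) gives a fibre sequence of functors of $E$
\[
    \mathrm{fib}(A_kG_E \to A_{k-1}G_E)(X) \lra A_k\Gamma(X, E) \lra A_{k-1}\Gamma(X, E).
\]
By \cref{corollary:malkiewich-layers}, whose equivalence is natural in $E$, the fibre term is $\big(\bS^{-TX^k} \otimes \omega_!\bS^0 \otimes \Sigma^\infty_X E^{\boxtimes k}\big)_{h(\Omega X \wr \Sigma_k)}$; this is a functor of $E$ of the form $(H \otimes \Sigma^\infty_X E^{\boxtimes k})_{h(\Omega X \wr \Sigma_k)}$ with $H = \bS^{-TX^k} \otimes \omega_!\bS^0 \in \catSp^{X^k \sslash \Sigma_k}$, and by the converse to \cref{proposition:homogeneous-classification} recorded right after it, any such functor is $k$-homogeneous, hence $k$-excisive. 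Thus $A_k\Gamma(X, -)$ sits in a fibre sequence between two $k$-excisive functors of $E$, and is therefore itself $k$-excisive.

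I do not expect any real obstacle here; the argument is bookkeeping once the layers are in hand. The one point to keep in mind — conceptual rather than technical — is that $A_k$ is \emph{by construction} the $k$-excisive approximation in the \emph{contravariant} variable $B$, whereas the statement asserts $k$-excisiveness in the \emph{covariant} variable $E$; these are a priori unrelated, and this is precisely why the proof must route through \cref{corollary:malkiewich-layers} and the classification of homogeneous functors rather than through the defining property of $A_k$. A secondary point worth checking carefully is that every equivalence invoked — the tower maps, the fibre-sequence identification, the layer formula — is genuinely natural in $E$, so that we really are manipulating a tower of functors $\catS_*^X \to \catSp$ and not merely a collection of levelwise statements.
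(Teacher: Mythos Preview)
Your proof is correct and follows essentially the same route as the paper: induction on $k$, the base case being the constant functor, and the inductive step using the fibre sequence from Malkiewich's tower together with \cref{corollary:malkiewich-layers} to recognise the fibre as $k$-homogeneous, whence the middle term is $k$-excisive. Your added remarks on naturality in $E$ and on the distinction between the contravariant and covariant variables are apt but do not change the structure of the argument.
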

\begin{proof}
We proceed by induction on $k$. For $k = 0$, $A_0\Gamma(X,E)$ is the $0$th layer in Malkiewich's tower which is seen to be constant. For $k > 0$, \cref{corollary:malkiewich-layers} shows that we have a fibre sequence of spectra
\[
    \big( \Sigma^\infty_X E^{\boxtimes k} \otimes \bS^{-TX^k} \otimes \omega_!\bS^0 \big)_{h(\Omega X \wr \Sigma_k)} \lra A_k\Gamma(X,E) \lra A_{k-1}\Gamma(X,E)
\]
which is natural in $E$ (by naturality of Malkiewich's tower). The left term is visibly $k$-homogeneous and the right term is $(k-1)$-polynomial by induction. Hence the middle term is $k$-polynomial.
\end{proof}

We are now ready to show that Malkiewich's and Goodwillie's towers agree.
\begin{proposition}
For any $E \in \catS_*^X$, $A_k\Gamma(X,E) \simeq P_k\Gamma(X,E)$.
\end{proposition}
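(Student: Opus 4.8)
The plan is to identify Malkiewich's $k$th approximation with Goodwillie's $k$-excisive approximation. Evaluating Malkiewich's tower (\cref{theorem:malkiewich-tower}) at $B = X$ and using its functoriality in $E$ produces a natural transformation $\eta \colon \Gamma(X,-) \to A_k\Gamma(X,-)$. By \cref{lemma:malkiewich-k-is-polynomial} the target is already $k$-excisive (i.e. $k$-polynomial in the sense of Goodwillie), so $P_kA_k\Gamma(X,-) \simeq A_k\Gamma(X,-)$ canonically. It therefore suffices to prove that $\eta$ is a $P_k$-equivalence, that is, that $P_k\eta$ is an equivalence: granting this, $P_k\Gamma(X,-) \simeq P_kA_k\Gamma(X,-) \simeq A_k\Gamma(X,-)$, naturally in $E$, which is exactly the assertion.

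To show that $\eta$ is a $P_k$-equivalence I would feed in the connectivity estimate of \cref{proposition:malkiewich-connectivity}: if $E \in \catS_*^X$ is $m$-connected then $\eta_E$ is $(k+1)(m - \dim X)$-connected, so its connectivity tends to $+\infty$ as $m \to \infty$. Now recall that the Goodwillie approximation is computed as the sequential colimit $P_kF \simeq \colim_j T_k^jF$, and that each application of $T_k$ is a finite homotopy limit over a punctured $(k+1)$-cube of the functor evaluated on inputs that are strictly more highly connected than the given one — this is precisely the mechanism behind Goodwillie's convergence statements. Hence, for any fixed $E$, the map $(T_k^j\eta)_E$ is a finite homotopy limit of instances of $\eta$ on inputs whose connectivity grows at least linearly in $j$, so its connectivity grows without bound in $j$, and passing to the colimit over $j$ shows that $(P_k\eta)_E$ is an equivalence. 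As this holds for every $E$, the map $P_k\eta$ is an equivalence and we are done.

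I expect the only delicate point to be this last step: tracking how the estimate of \cref{proposition:malkiewich-connectivity} propagates through the iterated $T_k$-construction and survives the finite homotopy limits — equivalently, pinning down the general principle that a natural transformation into a finitary $k$-excisive functor which is highly connected on highly connected inputs, with connectivity tending to infinity, is automatically a $P_k$-equivalence. Everything else — the existence and naturality of $\eta$, the $k$-excisiveness of $A_k\Gamma(X,-)$, and the single connectivity input — is already in hand. (Alternatively, one can run the argument on the fibre $C = \mathrm{fib}(P_k\Gamma(X,-) \to A_k\Gamma(X,-))$: it is a reduced $k$-excisive functor, and evaluating the connectivity estimate on fibrewise spheres $S^n_X$ forces each derivative $\partial_iC$ with $1 \le i \le k$ to be arbitrarily highly connected, whence $C \simeq P_kC \simeq 0$.)
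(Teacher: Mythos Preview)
Your argument is correct and follows essentially the same route as the paper: both use the $k$-excisiveness of $A_k\Gamma(X,-)$ (\cref{lemma:malkiewich-k-is-polynomial}) together with Malkiewich's connectivity estimate (\cref{proposition:malkiewich-connectivity}) to identify $P_k\Gamma(X,-)$ with $A_k\Gamma(X,-)$. The paper organises this slightly differently---it constructs $P_k\Gamma(X,-)\to A_k\Gamma(X,-)$ via the universal property and then invokes \emph{both} \cref{proposition:goodwillie-connectivity} and \cref{proposition:malkiewich-connectivity} to see this map is an order-$k$ agreement, hence an equivalence between $k$-polynomial functors---whereas you apply $P_k$ directly to $\eta\colon \Gamma(X,-)\to A_k\Gamma(X,-)$ and need only Malkiewich's estimate; your packaging is marginally more economical, but the substance is the same.
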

\begin{proof}
By naturality of Malkiewich's tower, we can turn things around to get a covariant functor:
\[
    E \longmapsto A_k\Gamma(X,E).
\]
By \cref{lemma:malkiewich-k-is-polynomial} this functor is $k$-polynomial, so the universal property of $P_k\Gamma(X,-)$ provides a natural transformation
\[
    P_k\Gamma(X,-) \lra A_k\Gamma(X,-).
\]
By \cref{proposition:goodwillie-connectivity,,proposition:malkiewich-connectivity}, this map is an order $k$ agreement between functors. Hence their first $k$ derivatives agree. But they are also $k$-polynomial, and therefore must be isomorphic. 
\end{proof}

\subsection{From Goodwillie's to Weiss' tower}

Let $M \geq 0$ be big enough such that $S(\cL \otimes \bC^M)$ has a section. We now want to understand how Goodwillie calculus interacts with Weiss calculus for functors obtained by restriction along the functor
\begin{align*}
    r \colon \catJ &\lra \catS_*^X \\
    V &\longmapsto S(\cL \otimes (V \oplus \bC^M)).
\end{align*}
For a functor $F \colon \catS_*^X \to \catSp$, we write $r^*F$ for the unitary functor obtained by composing with $r$. We will explain how to obtain the Weiss tower of $r^*F$ from the Goodwillie tower of $F$. Let us start with two observations.

\begin{lemma}\label{lemma:orbit-tensor}
Let $\lambda \in U(1)$ act by scalar multiplication on $V$, hence on $S^V$, and by scalar multiplication by $\lambda^{-1}$ on $\cL$, hence on $S(\cL)$. Then there is a homotopy equivalence of pointed spaces over $X$
\[
    (S(\cL)_+ \wedge_X S^V)_{hU(1)} \cong S^{\cL \otimes V},
\]
where the homotopy orbits, the smash product, and the disjoint basepoint are taken fibrewise in pointed spaces, i.e. all constructions are interpreted in the category $\catS_*^X$, and $S^V$ is seen as the constant parameterised space.
\end{lemma}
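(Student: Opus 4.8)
The content of the statement is that the fibrewise Thom space of the tensor bundle $\cL \otimes V \to X$ is the associated bundle obtained from the principal $U(1)$-bundle $S(\cL)$ and the pointed $U(1)$-space $S^V$. My plan is to prove it via the chain of identifications in $\catS_*^X$
\[
  \bigl(S(\cL)_+ \wedge_X S^V\bigr)_{hU(1)} \;\simeq\; \bigl(S(\cL)_+ \wedge_X S^V\bigr)/U(1) \;\cong\; S(\cL) \times_{U(1)} S^V \;\cong\; S^{\cL \otimes V},
\]
establishing each step separately, working from left to right.

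First I would fix conventions: $S^{\cL \otimes V}$ is by definition the fibrewise one-point compactification of $\cL \otimes V$, with basepoint section the points at infinity, and $S(\cL) \to X$ is a principal $U(1)$-bundle. The sign convention in the statement --- scalar multiplication by $\lambda^{-1}$ on $\cL$ and by $\lambda$ on $V$ --- is exactly what makes the diagonal action on $S(\cL)\times V$ the associated-bundle action whose quotient recovers $\cL \otimes V$ rather than its dual, i.e. $\cL \otimes V \cong S(\cL)\times_{U(1)} V$ as vector bundles over $X$. Since $S^V$ is the $U(1)$-equivariant one-point compactification of $V$ (fixing the point at infinity), applying $S(\cL)\times_{U(1)}(-)$ and passing to fibrewise compactifications gives the rightmost identification $S(\cL)\times_{U(1)} S^V \cong S^{\cL \otimes V}$ in $\catS_*^X$.

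For the middle identification I would use that, fibrewise, $A_+ \wedge B = (A \times B)/(A \times \{\ast_B\})$, so $S(\cL)_+ \wedge_X S^V$ has underlying space $S(\cL)\times_X S^V$ with the section $S(\cL)\times_X\{\infty\}$ collapsed; quotienting by the diagonal action and using $S(\cL)/U(1) = X$ yields $(S(\cL)_+ \wedge_X S^V)/U(1) = S(\cL)\times_{U(1)} S^V$ as fibrewise pointed spaces. For the leftmost equivalence, the diagonal $U(1)$-action on $S(\cL)_+ \wedge_X S^V$ is free away from the basepoint section, since it is already free on the $S(\cL)$-factor; by the standard fact that homotopy orbits agree with strict orbits for a pointed action that is free off the basepoint, the comparison map $(-)_{hU(1)} \to (-)/U(1)$ is a fibrewise equivalence.

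The only delicate points are bookkeeping rather than conceptual: matching the $\lambda$ versus $\lambda^{-1}$ convention with one's chosen convention for the associated-bundle construction so as to land on $\cL \otimes V$ and not $\cL^\vee \otimes V$, and being careful at the basepoint section when replacing homotopy orbits by strict orbits (one reduces to the honestly free $U(1)$-space $S(\cL)\times_X (S^V \setminus \{\infty\}) = S(\cL)\times_X V$ and handles the basepoint via a cofibre sequence). I would keep both discussions short.
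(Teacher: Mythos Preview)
Your proposal is correct and follows essentially the same approach as the paper: both arguments observe that the diagonal $U(1)$-action is free away from the basepoint section, so homotopy orbits agree with strict orbits, and then identify the strict quotient with $S^{\cL\otimes V}$. The only difference is presentational---the paper writes down the explicit map $(x,z,v)\mapsto(x,z\otimes v)$ and checks it is a homeomorphism by hand, whereas you phrase the same identification in the language of associated bundles $S(\cL)\times_{U(1)}S^V$.
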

\begin{proof}
We will construct an explicit equivalence. Given any $x \in X$, let $*_x \in S^{\cL \otimes V}$ denote the basepoint in the one-point compactification of $\cL|_x \otimes V$, write $(x,z) \in S(\cL)$ for a typical element in the fibre above $x$, and let $+_x \in S(\cL)_+$ denote the disjoint basepoint in the fibre above $x$. We define the map
\begin{align*}
    S(\cL)_+ \times S^V &\lra S^{\cL \otimes V} \\
    (x,z,v) &\longmapsto (x,z \otimes v) \text{ if } z \neq +_x\\
    (x,+_x,v) &\longmapsto *_x
\end{align*}
The subspace $S(\cL)_+ \times \{*_x, x \in X\} \cup \{+_x, x \in X\} \times S^V \subset S(\cL)_+ \times S^V$ is sent to $\{*_x, x \in X\} \subset S^{\cL \otimes V}$, so we obtain an induced map on the quotient
\[
    S(\cL)_+ \wedge_X S^V \lra S^{\cL \otimes V}.
\]
The $U(1)$ action is free away from the basepoints because it is free on $S(\cL)$. Thus homotopy orbits are computed by the standard point-set orbits. This latter map is $U(1)$-equivariant, where the target has the trivial action, so we obtain a map
\[
    (S(\cL)_+ \wedge_X S^V)_{U(1)} \lra S^{\cL \otimes V}.
\]
from the (homotopy) orbits. It is a homeomorphism: the bijectivity follows from the observation that $V \otimes \cL|_x \cong V$ because $\cL|_x$ is one-dimensional, and continuity of the inverse is clear.
\end{proof}

\begin{lemma}\label{lemma:untwisting}
There is a homeomorphism of pointed spaces over $X$
\[
    S^{\cL \otimes \bC^M} \wedge_X S(\cL)_+ \cong S^{\bC^M} \wedge_X S(\cL)_+.
\]
where all constructions are interpreted in $\catS_*^X$ and $S^{\bC^M}$ denotes the trivial sphere bundle over $X$.
\end{lemma}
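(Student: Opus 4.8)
The plan is to exhibit the homeomorphism as coming from a bundle isomorphism: pulled back to the total space of the sphere bundle, the line bundle $\cL$ becomes canonically trivial, which is exactly what the name ``untwisting'' refers to, and both sides of the claimed homeomorphism are built by the same functorial recipe from the relevant pullback bundles over $S(\cL)$.

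\textbf{Step 1: the canonical trivialisation.} Let $\pi \colon S(\cL) \to X$ denote the projection. Over a point $(x,z) \in S(\cL)$ the fibre of $\pi^*\cL$ is $\cL_x$, and it contains the distinguished unit vector $z$, which spans $\cL_x$; sending $w \in \cL_x$ to the unique scalar $\lambda \in \bC$ with $w = \lambda z$ defines, continuously in $(x,z)$, an isomorphism of complex line bundles over $S(\cL)$
\[
    \pi^*\cL \overset{\cong}{\lra} \underline{\bC}.
\]
Tensoring with $\bC^M$ gives an isomorphism $\pi^*(\cL \otimes \bC^M) = (\pi^*\cL) \otimes \bC^M \cong \underline{\bC^M}$ of vector bundles over $S(\cL)$.

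\textbf{Step 2: both sides as fibrewise Thom spaces.} The key observation is that for any complex vector bundle $E \to X$ there is a natural homeomorphism of pointed spaces over $X$
\[
    S^E \wedge_X S(\cL)_+ \cong X \sqcup_{S(\cL)} S^{\pi^*E},
\]
where on the right $S(\cL) \hookrightarrow S^{\pi^*E}$ is the fibrewise section at infinity, the pushout is formed over $\pi$, and the result is regarded over $X$ with its evident section; equivalently, $S^E \wedge_X S(\cL)_+$ is the fibrewise-over-$X$ Thom space of $\pi^*E \to S(\cL)$. Indeed, restricting over $x \in X$ the bundle $\pi^*E$ is the trivial bundle on the fibre $\pi^{-1}(x) = S(\cL_x)$, and the pushout there becomes $(S(\cL_x)\times S^{E_x})/(S(\cL_x)\times\{\infty\}) \cong S^{E_x}\wedge S(\cL_x)_+$, which is the fibre of $S^E \wedge_X S(\cL)_+$; this identification is visibly continuous and compatible with the maps to $X$. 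Applying this with $E = \cL\otimes\bC^M$ and with the trivial bundle $E = \bC^M$, and noting that $S^{(-)}$ and the pushout are functorial in isomorphisms of bundles over $S(\cL)$, the isomorphism of Step~1 induces the desired homeomorphism.

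\textbf{On the difficulty.} There is no homotopy-theoretic content here — the statement is a genuine homeomorphism, and the only thing to watch is the fibrewise point-set bookkeeping (continuity, basepoint sections going to basepoint sections, commutation with the projections to $X$), in the same spirit as \cref{lemma:orbit-tensor}. If one prefers to skip the Thom-space reformulation, the homeomorphism can be written down directly as $(s \wedge z) \mapsto (\phi_z(s) \wedge z)$, where for $z \in S(\cL)$ lying over $x$ the map $\phi_z \colon S^{\cL_x \otimes \bC^M} \to S^{\bC^M}$ is the one-point compactification of $w \otimes v \mapsto \lambda v$ (with $w = \lambda z$); its inverse is $(t \wedge z) \mapsto ((z \otimes t) \wedge z)$, and both are continuous. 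Either way, verifying these routine compatibilities is the entirety of the work.
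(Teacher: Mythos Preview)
Your proof is correct, and your direct formula $(s \wedge z) \mapsto (\phi_z(s) \wedge z)$ is exactly the map the paper writes down: in the paper's notation the input is $[x, z_x \otimes v, \lambda \cdot z_x]$ (so your $z$ is $\lambda z_x$, your $w$ is $z_x = \lambda^{-1} z$), and the output $[x, \lambda^{-1} v, \lambda z_x]$ agrees with your $\phi_z(z_x \otimes v) = \lambda^{-1} v$. The paper gives only this one-line formula and leaves the verification to the reader; your Step~1/Step~2 repackaging via the tautological trivialisation $\pi^*\cL \cong \underline{\bC}$ over $S(\cL)$ and the fibrewise Thom-space description is a more conceptual way to see \emph{why} such a formula exists, but the underlying content is the same.
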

\begin{proof}
With the same notations as in the proof of \cref{lemma:orbit-tensor}, a homeomorphism is given by
\begin{align*}
    S^{\cL \otimes \bC^M} \wedge_X S(\cL)_+ &\lra S^{\bC^M} \wedge_X S(\cL)_+ \\
    [x, z_x \otimes v, \lambda \cdot z_x] &\longmapsto [x, \lambda^{-1}\cdot v, \lambda \cdot z_x]
\end{align*}
where $z_x \in S(\cL)|_x$.
\end{proof}

We shall now explain how to compute the Weiss tower of $r^*F$ from the Goodwillie tower of $F$. The procedure is explained in \cite[Section~3]{BarnesEldred} in the unparameterised setting, that is for functors on $\catS_*$, and the extension to our parameterised setting is straightforward (and even slightly easier as we only work with spectrum valued functors). We will thus be brief with our proofs, mostly pointing out the differences. The strategy is clear: first consider homogeneous functors, then polynomial ones, and finally the whole tower. Recall that we write $\Ind_{(\Omega X \times U(1)) \wr \Sigma_k}^{U(k)}$ for the functor $(Bf)_!$ induced by $f \colon (\Omega X \times U(1)) \wr \Sigma_k \twoheadrightarrow U(1) \wr \Sigma_k \hookrightarrow U(k)$, i.e. the projection followed by the inclusion given by
\begin{equation}\label{equation:permutation-matrices}
    U(1) \wr \Sigma_k \ni (\lambda_i, \sigma) \longmapsto \text{diag}(\lambda_i) \cdot M(\sigma) \in U(k)
\end{equation}
where $M(\sigma)$ is the permutation matrix associated to $\sigma \in \Sigma_k$.

\begin{lemma}\label{lemma:goodwillie-weiss-homogeneous}
Let $F \colon \catS_*^X \to \catSp$ be a $k$-homogeneous functor in the sense of Goodwillie. Then $r^*F$ is $k$-homogeneous in the sense of Weiss. Furthermore, writing $\partial_k F$ for its Goodwillie derivative, the Weiss derivative is
\[
    \Ind_{(\Omega X \times U(1)) \wr \Sigma_k}^{U(k)} (\Sigma^{k(2M-1)} \partial_k F \otimes \Sigma^\infty_X S(\cL)_+^{\boxtimes k})
\]
where the actions are given by
\begin{itemize}
    \item $\Omega X$ acts on $\partial_kF$ and on $\Sigma^\infty_X S(\cL)_+$ by definition of parameterised spectra $\catSp^X \simeq \catSp^{B\Omega X}$,
    \item $\lambda \in U(1)$ acts on $S(\cL)$ by multiplication by $\lambda^{-1}$, and trivially on $\partial_k F$,
    \item the symmetric group $\Sigma_k$ acts by definition on $\partial_kF$, and by permutation on the factors of $\Sigma^\infty_X S(\cL)_+^{\boxtimes k}$ and of $\Sigma^{k(2M-1)} = (\Sigma^{2M-1})^{\otimes k}$.
\end{itemize}
\end{lemma}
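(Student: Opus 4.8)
The plan follows the template of \cite[Section~3]{BarnesEldred} laid out just before the statement: reduce to the classification of homogeneous functors, then do a bookkeeping of group actions.

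First, invoke the Goodwillie classification (\cref{proposition:homogeneous-classification}): since $F$ is $k$-homogeneous, $F(E) \simeq (\partial_k F \otimes \Sigma^\infty_X E^{\boxtimes k})_{h(\Omega X \wr \Sigma_k)}$ naturally in $E \in \catS_*^X$. Applying this with $E = r(V) = S(\cL\otimes(V\oplus\bC^M))$,
\[
    r^*F(V) \simeq \big(\partial_k F \otimes (\Sigma^\infty_X S(\cL\otimes(V\oplus\bC^M)))^{\boxtimes k}\big)_{h(\Omega X \wr \Sigma_k)},
\]
so everything reduces to understanding the parameterised spectrum $\Sigma^\infty_X S(\cL\otimes(V\oplus\bC^M))$ over $X$ — together with the $\Sigma_k$-action on its external powers — as a functor of $V$.

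The heart of the proof is a geometric identification of the external $k$-th power $(\Sigma^\infty_X S(\cL\otimes(V\oplus\bC^M)))^{\boxtimes k}$, as a $\Sigma_k$-equivariant parameterised spectrum over $X^k$, with a homotopy orbit of $\Sigma^{k(2M-1)}\,(\Sigma^\infty_X S(\cL)_+)^{\boxtimes k} \otimes \bS^{kV}$ under a torus $U(1)^k$ (acting on the $i$-th copy of $S(\cL)$ by $\lambda_i^{-1}$ and on $\bS^{kV}=\bS^{\bC^k\otimes V}$ through the diagonal torus of $U(k)$), with $\Sigma_k$ permuting the factors. Being external powers, it suffices to treat $k=1$: here one first writes $S(\cL\otimes W) \cong S(\cL)\times_{U(1)} S(W)$ for $W = V\oplus\bC^M$ — every unit vector of $\cL\otimes W$ is decomposable since $\cL$ has rank one — so that $\Sigma^\infty_X$ of it is the pullback along the classifying map $X \to BU(1)$ of $\cL$ of the Borel $U(1)$-spectrum built from $S(V\oplus\bC^M)$; then one peels the summand $\bC^M$ off $U(1)$-equivariantly (via the cofibre sequence $S(\bC^M)_+ \to S(V\oplus\bC^M)_+ \to S(\bC^M)_+\wedge S^V$ and its iterates), using \cref{lemma:untwisting} to absorb the $\cL\otimes\bC^M$-twist into the $S(\cL)_+$-factor and produce the shift $\Sigma^{2M-1}$, and \cref{lemma:orbit-tensor} to convert the remaining $\cL\otimes V$-twist into the pair $(S(\cL)_+, S^V)$ under $U(1)$-homotopy orbits. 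I expect this identification — in particular keeping straight the $U(1)$-actions, the permutation action, the Thom-spectrum twists, and the sign conventions of \cref{lemma:orbit-tensor,lemma:untwisting} — to be the main obstacle; everything else is formal.

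Granting it, the rest is a standard repackaging. Substituting into $r^*F(V)$ and moving the (trivial) $U(1)^k$-action on $\partial_k F$ and on $\Sigma^{k(2M-1)}$ past the homotopy orbits, the two groups amalgamate: $U(1)^k \rtimes (\Omega X \wr \Sigma_k) = (\Omega X \times U(1)) \wr \Sigma_k$, yielding
\[
    r^*F(V) \simeq \Big(\Sigma^{k(2M-1)}\partial_k F \otimes (\Sigma^\infty_X S(\cL)_+)^{\boxtimes k} \otimes \bS^{kV}\Big)_{h((\Omega X\times U(1))\wr\Sigma_k)},
\]
where $\bS^{kV}$ carries the restriction of its $U(k)$-action along the homomorphism $f$ of \eqref{equation:permutation-matrices}. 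Now apply the projection formula for $(Bf)_! \dashv (Bf)^*$ together with $q_!(Bf)_! = (q\circ Bf)_!$ for $q\colon BU(k)\to *$: this rewrites the right-hand side as $\big(\Ind_{(\Omega X\times U(1))\wr\Sigma_k}^{U(k)}(\Sigma^{k(2M-1)}\partial_k F \otimes (\Sigma^\infty_X S(\cL)_+)^{\boxtimes k}) \otimes \bS^{kV}\big)_{hU(k)}$. A unitary functor of this shape is $k$-homogeneous in the sense of Weiss (the converse half of the classification of homogeneous unitary functors recalled above), and its $k$-th Weiss derivative is, by that same classification, the displayed $U(k)$-spectrum with the action as described. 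Naturality in $V$ holds at each step, which proves the lemma.
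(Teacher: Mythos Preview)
Your overall architecture matches the paper's exactly: start from the classification of $k$-homogeneous Goodwillie functors, identify $\Sigma^\infty_X S(\cL\otimes(V\oplus\bC^M))$ as a $U(1)$-homotopy orbit involving $S(\cL)_+$ and $\bS^V$ using \cref{lemma:orbit-tensor,lemma:untwisting}, then repackage via the projection formula along $f\colon(\Omega X\times U(1))\wr\Sigma_k\to U(k)$. The final paragraph is precisely what the paper does.

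The one substantive difference is how you reach the geometric identification, and here your outline has a wrinkle. The cofibre sequence you quote is not correct: the cofibre of $S(\bC^M)_+\to S(V\oplus\bC^M)_+$ is $S(V\oplus\bC^M)/S(\bC^M)$, which is \emph{not} $S(\bC^M)_+\wedge S^V$ (compare homology); the Thom-collapse sequence you have in mind is $S(V)_+\to S(V\oplus\bC^M)_+\to S(\bC^M)_+\wedge S^V$. Even with this correction it is unclear what ``its iterates'' means or how a three-term cofibre sequence yields a single-term identification of the middle object. The paper sidesteps all of this with a one-line move: the fibrewise cofibre sequence
\[
    S(\cL\otimes(V\oplus\bC^M)) \lra D(\cL\otimes(V\oplus\bC^M)) \lra S^{\cL\otimes(V\oplus\bC^M)}
\]
(pointed by the chosen section of $S(\cL\otimes\bC^M)$) has contractible middle term, so $\Sigma^\infty_X S(\cL\otimes(V\oplus\bC^M))\simeq\Sigma^{-1}\bS^{\cL\otimes(V\oplus\bC^M)}\simeq\Sigma^{-1}\bS^{\cL\otimes V}\otimes\bS^{\cL\otimes\bC^M}$. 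From there \cref{lemma:orbit-tensor} handles the $\bS^{\cL\otimes V}$ factor and \cref{lemma:untwisting} trades $\bS^{\cL\otimes\bC^M}\otimes\Sigma^\infty_XS(\cL)_+$ for $\bS^{2M}\otimes\Sigma^\infty_XS(\cL)_+$, giving the $\Sigma^{2M-1}$ directly. Replacing your balanced-product manoeuvre with this desuspension step makes the argument go through cleanly.
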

\begin{proof}
It is a direct computation from the general form of a homogeneous functor recalled in \cref{proposition:homogeneous-classification}. The functor $F$ is given by the formula
\[
    F(E) = (\partial_kF \otimes \Sigma^\infty_X E^{\boxtimes k})_{h(\Omega X \wr \Sigma_k)}
\]
Therefore
\[
    r^*F(V) = (\partial_kF \otimes (\Sigma^\infty_X S(\cL \otimes (V \oplus \bC^M)))^{\boxtimes k})_{h(\Omega X \wr \Sigma_k)}.
\]
There is a cofibre sequence of pointed spaces over $X$:
\[
    S(\cL \otimes (V \oplus \bC^M)) \lra D(\cL \otimes (V \oplus \bC^M)) \lra S^{\cL \otimes (V \oplus \bC^M)},
\]
where the sphere and disc bundles are pointed using the fixed section of $S(\bC^M \otimes \cL)$. By suspending we get
\[
    \Sigma^\infty_X S(\cL \otimes (V \oplus \bC^M)) \simeq \Sigma^{-1} \bS^{\cL \otimes (V \oplus \bC^M)} \simeq \Sigma^{-1} (\bS^{\cL \otimes V} \otimes \bS^{\cL \otimes \bC^M}).
\]
Plugging this into the formula for $r^*F$ yields
\[
    r^*F(V) \simeq (\partial_k F \otimes \Sigma^{-k}(\bS^{\cL \otimes V})^{\boxtimes k} \otimes (\bS^{\cL \otimes \bC^M})^{\boxtimes k})_{h(\Omega X \wr \Sigma_k)}.
\]
By \cref{lemma:orbit-tensor} we have an equivalence
\[
    \bS^{\cL \otimes V} \simeq (\Sigma^\infty_X S(\cL)_+ \otimes \bS^{V})_{hU(1)},
\]
which we plug into the formula of $r^*F(V)$ to obtain
\begin{align*}
    r^*F(V) &\simeq (\partial_k F \otimes \Sigma^{-k}(\bS^{\cL \otimes \bC^M})^{\boxtimes k} \otimes ((\Sigma^\infty_X S(\cL)_+ \otimes \bS^{V})_{hU(1)})^{\boxtimes k})_{h(\Omega X \wr \Sigma_k)} \\
    &\simeq (\partial_k F \otimes\Sigma^{-k} (\bS^{\cL \otimes \bC^M})^{\boxtimes k} \otimes \Sigma^\infty_X S(\cL)_+^{\boxtimes k} \otimes \bS^{kV})_{h(\Omega X \times U(1)) \wr \Sigma_k}.
\end{align*}
By \cref{lemma:untwisting} there is an equivalence
\[
    \bS^{\cL \otimes \bC^M} \otimes \Sigma^\infty_X S(\cL)_+ \simeq \bS^{2M} \otimes \Sigma^\infty_X S(\cL)_+
\]
where $\bS^{2M}$ is the constant parameterised spectrum. Therefore
\[
    r^*F(V) \simeq \Ind_{(\Omega X \times U(1)) \wr \Sigma_k}^{U(k)} (\Sigma^{k(2M-1)} \partial_k F \otimes \Sigma^\infty_X S(\cL)_+^{\boxtimes k}) \otimes_{U(k)} \bS^{kV}
\]
from which we can read off the Weiss derivative as desired.
\end{proof}

\begin{lemma}\label{lemma:goodwillie-weiss-polynomial}
Let $F \colon \catS_*^X \to \catSp$ be a $k$-polynomial functor in the sense of Goodwillie. Then $r^*F$ is $k$-polynomial in the sense of Weiss.
\end{lemma}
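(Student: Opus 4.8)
The plan is to run the inductive argument of \cite[Section~3]{BarnesEldred}, using the Goodwillie tower of $F$ and the homogeneous case already established in \cref{lemma:goodwillie-weiss-homogeneous}. Since $F$ is $k$-polynomial in the sense of Goodwillie, the natural map $F \to P_kF$ is an equivalence, so it suffices to show that $r^*P_kF$ is $k$-polynomial in the sense of Weiss. Now $P_kF$ is assembled from the layers $D_jF = \mathrm{fib}(P_jF \to P_{j-1}F)$ for $0 \leq j \leq k$ by the finitely many fibre sequences
\[
    D_jF \lra P_jF \lra P_{j-1}F,
\]
starting from $P_0F = D_0F$, a constant functor.

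Next I would observe that $r^* = (-)\circ r \colon \catFun(\catS_*^X,\catSp) \to \catFun(\catJ,\catSp)$ is precomposition, hence preserves all limits and colimits (these are computed objectwise in both functor categories); in particular it is exact and carries the fibre sequences above to fibre sequences of unitary functors
\[
    r^*D_jF \lra r^*P_jF \lra r^*P_{j-1}F.
\]
By \cref{lemma:goodwillie-weiss-homogeneous} each $r^*D_jF$ with $j \geq 1$ is $j$-homogeneous in the sense of Weiss (the case $j=0$ being a constant functor, which is $0$-polynomial), hence in particular $j$-polynomial, and therefore $k$-polynomial, using the standard fact that a $j$-polynomial functor is $k$-polynomial for every $k \geq j$.

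Finally I would record that the class of $k$-polynomial unitary functors to spectra is closed under extensions in fibre sequences. Indeed, by \cref{definition:polynomial} a functor $G$ is $k$-polynomial precisely when $G \to \tau_kG$ is an equivalence, and $\tau_kG(V) = \lim_{0 \neq W \subset \bC^{k+1}} G(V \oplus W)$ is a (finite) limit, so $\tau_k$ preserves fibre sequences — just as in the proof of \cref{lemma:polynomial-colimit-is-polynomial}, now using that limits commute with limits. Given a fibre sequence $G' \to G \to G''$ with $G'$ and $G''$ $k$-polynomial, the induced map of fibre sequences $G^{(-)} \to \tau_kG^{(-)}$ is an equivalence on the two outer terms, hence on the middle term, so $G$ is $k$-polynomial. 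Applying this inductively up the finitely many fibre sequences displayed above, beginning with $r^*D_0F$ and $r^*D_1F$, yields that $r^*P_kF \simeq r^*F$ is $k$-polynomial. The only step meriting more than formal care — and the one I would write out — is this closure of $k$-polynomial functors under extensions; the rest is bookkeeping with the towers.
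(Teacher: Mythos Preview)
Your proof is correct and follows essentially the same approach as the paper: induct using the fibre sequences $D_jF \to P_jF \to P_{j-1}F$, apply $r^*$ (which preserves fibre sequences), invoke \cref{lemma:goodwillie-weiss-homogeneous} on the layers, and use closure of $k$-polynomial functors under extensions. The paper organises the induction on $k$ rather than building up from $j=0$ with $k$ fixed, but this is a cosmetic difference.
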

\begin{proof}
The proof is by induction on $k$ as in \cite[Proposition~3.2]{BarnesEldred}. The case $k = 0$ is clear. For $k > 0$ there is a fibre sequence
\[
    D_kF \lra P_kF \lra P_{k-1}F
\]
where $D_kF$ is $k$-homogeneous. Since fibre sequences are defined pointwise in functor categories, the restricted sequence
\[
    r^*(D_kF) \lra r^*(P_kF) \lra r^*(P_{k-1}F)
\]
is also a fibre sequence. By \cref{lemma:goodwillie-weiss-homogeneous} the fibre is $k$-homogeneous, and by induction $r^*(P_{k-1}F)$ is $(k-1)$-polynomial. Hence $r^*(P_kF)$ is $k$-polynomial.
\end{proof}

\begin{lemma}\label{lemma:goodwillie-weiss-agreement}
If $F \colon \catS_*^X \to \catSp$ is stably $k$-excisive in the sense of Goodwillie \cite[Definition~4.1]{GoodwillieII}, then $r^*F \to r^*P_kF$ is a $k$th order agreement in the sense of Weiss.
\end{lemma}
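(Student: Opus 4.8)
The plan is to transport Goodwillie's connectivity estimate for stably $k$-excisive functors through the functor $r$, in the spirit of \cite[Proposition~3.3]{BarnesEldred}; the only input specific to our situation is the (large) connectivity of the sphere bundles $r(V) = S(\cL \otimes (V \oplus \bC^M))$.

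First I would unwind the hypothesis. By the definition of stable $k$-excision \cite[Definition~4.1]{GoodwillieII} together with \cite[Propositions~1.4 and~1.5]{GoodwillieIII} --- exactly the input used to prove \cref{proposition:goodwillie-connectivity}, now applied to $F$ instead of $\Gamma(X,-)$ --- there is a constant $c$ such that for every $E \in \catS_*^X$ whose homotopy fibres over $X$ are $(j-1)$-connected, the Goodwillie approximation
\[
    F(E) \lra P_kF(E)
\]
is $\bigl((k+1)(j - \dim X) - c\bigr)$-connected.

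Next I would record the connectivity of the test objects. Since $r(V)$ is the sphere bundle of a complex vector bundle of rank $\dim_\bC V + M$ over $X$, each of its homotopy fibres is $S^{2(\dim_\bC V + M) - 1}$, which is $(2\dim_\bC V + 2M - 2)$-connected; so we may take $j = 2\dim_\bC V + 2M - 1$ above. Substituting $E = r(V)$ then shows that
\[
    r^*F(V) = F(r(V)) \lra P_kF(r(V)) = r^*(P_kF)(V)
\]
is $\bigl(2(k+1)\dim_\bC V + (k+1)(2M - 1 - \dim X) - c\bigr)$-connected. Thus its connectivity is bounded below by an affine function of $\dim_\bC V$ with slope $2(k+1)$, which is precisely the estimate defining a $k$th order agreement in Weiss's sense --- and matches the numerical input of \cref{proposition:numerical-observation}, so that a posteriori $T_k(r^*F) \to T_k(r^*(P_kF))$ is a levelwise equivalence.

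I do not expect a genuine obstacle here: the mathematical content is the connectivity count just sketched. The care needed is purely in the bookkeeping --- keeping the two conventions for ``$m$-connected'' straight (total space versus homotopy fibre), ensuring that $\dim X$ appears on the correct side of the inequality so that the fibrewise connectivity of $r(V)$ dominates, and observing that the stable-excision estimates of \cite{GoodwillieII,GoodwillieIII}, though stated for functors on pointed spaces, apply verbatim to functors on $\catS_*^X$ because $\catSp^X$ is compactly generated and the relevant cubical arguments are local over $X$. In short, this is a routine parameterised analogue of \cite[Proposition~3.3]{BarnesEldred}.
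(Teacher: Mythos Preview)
Your proposal is correct and follows essentially the same route as the paper. The paper's proof is extremely terse---it simply says this is the parameterised version of \cite[Lemma~2.29]{BarnesEldred} and records that $S(\cL \otimes (V \oplus \bC^M))$ is $(2\dim_\bC V + 2M-2)$-connected as a parameterised space over $X$---whereas you have spelled out the connectivity bookkeeping explicitly; the only cosmetic discrepancy is that you cite Proposition~3.3 of Barnes--Eldred while the paper cites Lemma~2.29.
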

\begin{proof}
This is the parameterised version of \cite[Lemma~2.29]{BarnesEldred}. The only difference is that connectivity has to be interpreted in the parameterised sense. In particular $S(\cL \otimes (V \oplus \bC^M))$ is $(\dim(V) + 2M-2)$-connected as a parameterised space over $X$.
\end{proof}

\begin{lemma}\label{lemma:goodwillie-weiss-approximation}
If $F$ is stably $k$-excisive, then $T_k (r^*F) \to T_k(r^* P_kF) \simeq r^* P_kF$ is an equivalence. Thus the $k$-polynomial approximation of $r^*F$ is given by the map $r^*F \to r^*P_kF$.
\end{lemma}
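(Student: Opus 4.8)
The plan is to assemble \cref{lemma:goodwillie-weiss-polynomial,lemma:goodwillie-weiss-agreement} together with the numerical observation of \cref{proposition:numerical-observation}, following the unparameterised template of \cite[Section~3]{BarnesEldred}. First I would note that $P_kF$ is $k$-polynomial in the sense of Goodwillie, so \cref{lemma:goodwillie-weiss-polynomial} shows that $r^*P_kF$ is $k$-polynomial in the sense of Weiss; hence the unit map $r^*P_kF \to T_k(r^*P_kF)$ is an equivalence, which is exactly the identification $T_k(r^*P_kF) \simeq r^*P_kF$ already recorded in the statement.

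Next, I would use that $F$ is stably $k$-excisive. By \cref{lemma:goodwillie-weiss-agreement} the natural transformation $r^*F \to r^*P_kF$ is a $k$th order agreement in the sense of Weiss, which provides in particular a constant $c \in \bZ$ such that $r^*F(V) \to r^*P_kF(V)$ is $(2(k+1)\dim_\bC(V) + c)$-connected for every $V \in \catJ$. Applying \cref{proposition:numerical-observation} with $d = k$ then shows that $T_k(r^*F) \to T_k(r^*P_kF)$ is a levelwise equivalence. Composing with the identification of the previous paragraph yields $T_k(r^*F) \simeq r^*P_kF$, which is the first assertion.

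Finally, since $r^*P_kF$ is $k$-polynomial, the universal property of $T_k(-)$ (the unit of the left adjoint to the inclusion of $k$-polynomial functors, recalled after \cref{definition:polynomial}) factors the transformation $r^*F \to r^*P_kF$ uniquely through $r^*F \to T_k(r^*F)$, and by construction the resulting map $T_k(r^*F) \to r^*P_kF$ is the equivalence just produced. Hence $r^*F \to r^*P_kF$ identifies with the unit map, i.e. it is the $k$-polynomial approximation of $r^*F$. The argument is purely formal given the two preceding lemmas; the only point deserving attention is matching the notion of ``$k$th order agreement'' with the precise connectivity estimate fed into \cref{proposition:numerical-observation}, that is, keeping track of the slope $2(k+1)$ and the constant $c$. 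This is the parameterised analogue of the final step in \cite[Section~3]{BarnesEldred}, and it reuses the bookkeeping already carried out in the proof of \cref{lemma:goodwillie-weiss-agreement}.
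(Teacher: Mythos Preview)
Your proof is correct and follows essentially the same approach as the paper: you use \cref{lemma:goodwillie-weiss-polynomial} to identify $T_k(r^*P_kF) \simeq r^*P_kF$, and \cref{lemma:goodwillie-weiss-agreement} (together with \cref{proposition:numerical-observation}) to see that $T_k(r^*F) \to T_k(r^*P_kF)$ is an equivalence, exactly as the paper does via the commutative square comparing the units of $T_k$. The only cosmetic difference is that you spell out the connectivity estimate explicitly and invoke \cref{proposition:numerical-observation}, whereas the paper leaves this implicit in the phrase ``as a consequence of \cref{lemma:goodwillie-weiss-agreement}''.
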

\begin{proof}
This can be proved as in \cite[Proposition~3.4]{BarnesEldred}. There is a commutative diagram
\[
\begin{tikzcd}
r^*F(V) \arrow[d] \arrow[r]      & r^*(P_kF)(V) \arrow[d, "\simeq"] \\
T_k(r^*F)(V) \arrow[r, "\simeq"] & T_k(r^*P_kF)(V)                 
\end{tikzcd}
\]
The right vertical arrow is an equivalence by \cref{lemma:goodwillie-weiss-polynomial}. The bottom horizontal arrow is an equivalence as a consequence of \cref{lemma:goodwillie-weiss-agreement}.
\end{proof}

\begin{proposition}\label{proposition:goodwillie-weiss-tower}
If $F$ is an analytic functor then so is $r^*F$. Furthermore the Weiss tower of $r^*F$ is the restriction of the Goodwillie tower of $F$.
\end{proposition}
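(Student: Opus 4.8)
The plan is to assemble the lemmas of this subsection with one uniform connectivity estimate. I begin by noting that an analytic functor $F$ is, by definition, stably $k$-excisive for every $k$, with connectivity constants affine in $k$. Hence \cref{lemma:goodwillie-weiss-approximation} applies for every $k$ and identifies the $k$th polynomial approximation $T_k(r^*F)$ with $r^*P_kF$; since $r^*$ is a functor and the structure maps of the tower $\{r^*P_kF\}_k$ are $r^*$ applied to the maps $P_kF \to P_{k-1}F$, these identifications are compatible in $k$. Therefore the Weiss tower of $r^*F$ is precisely the restriction along $r$ of the Goodwillie tower of $F$; passing to fibres, its $k$th layer is $r^*D_kF$, a $k$-homogeneous unitary functor whose Weiss derivative was computed in \cref{lemma:goodwillie-weiss-homogeneous}. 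This establishes the second assertion.

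It remains to see that $r^*F$ is analytic, that is, that this tower converges. Here I would feed the connectivity of the input into the analyticity estimate for $F$. Recall that $F$ being $\rho$-analytic supplies constants so that whenever $E \in \catS_*^X$ is $m$-connected the approximation map $F(E) \to P_kF(E)$ has connectivity affine in $m$ with slope $k+1$, and moreover $F(E) \simeq \lim_k P_kF(E)$ once $m$ is large. As noted in the proof of \cref{lemma:goodwillie-weiss-agreement}, the parameterised space $r(V) = S(\cL \otimes (V \oplus \bC^M))$ over $X$ is $(2\dim_\bC V + 2M - 2)$-connected, its fibres being spheres of dimension $2\dim_\bC V + 2M - 1$ and $X$ being finite-dimensional. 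Substituting $m = 2\dim_\bC V + 2M - 2$ into the estimate shows that $r^*F(V) \to r^*P_kF(V) \simeq T_k(r^*F)(V)$ has connectivity affine in $\dim_\bC V$ with slope proportional to $k+1$ --- exactly the shape of analyticity in unitary calculus --- so the Weiss tower of $r^*F$ converges and $r^*F(V) \simeq \lim_k T_k(r^*F)(V)$.

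The only delicate point, and the main obstacle, is bookkeeping: matching the precise numerical form of ``analytic'' on the Goodwillie side (through stable $k$th order excision, \cite[Definition~4.1]{GoodwillieII}, and the convergence result used in \cite[Propositions~1.4 and~1.5]{GoodwillieIII}) with the form demanded in unitary calculus, while tracking the substitution $m \rightsquigarrow 2\dim_\bC V + O(1)$ through both. This is the parameterised analogue of \cite[Section~3]{BarnesEldred}, and the new feature --- a nontrivial finite CW base $X$ --- causes no trouble, since every ingredient (the connectivity of $r(V)$ over $X$, the excision bounds for $F$) is already available fibrewise over the finite-dimensional $X$. I would therefore write the proof tersely, citing \cite[Section~3]{BarnesEldred} for the overall structure and spelling out only the connectivity substitution above.
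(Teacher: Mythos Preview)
Your proposal is correct and follows essentially the same approach as the paper: the paper's proof simply states that the result follows from \cref{lemma:goodwillie-weiss-approximation} by induction on $k$, referring to \cite[Theorem~3.5]{BarnesEldred} for details. You have spelled out more explicitly the compatibility in $k$ and the convergence argument via the connectivity of $r(V)$, but this is exactly the content packaged into that reference.
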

\begin{proof}
This follows from \cref{lemma:goodwillie-weiss-approximation} by induction on $k$. See \cite[Theorem~3.5]{BarnesEldred} for more details.
\end{proof}

\subsection{Conclusion for continuous maps}

Let $\cL$ be a complex line bundle on $X$ and let $\alpha = c_1(\cL)$. In this subsection, we will furthermore assume that $X$ is a finite dimensional finite CW complex. Let us also pick some $M > \frac{1}{2}\dim_{\mathrm{CW}}(X)$ so that $S(\cL \otimes \bC^M)$ has a section. The group $\Map(X,U(1))$ acts by bundle automorphisms on $\cL$: for $g \in \Map(X,U(1))$ and $z \in \cL|_x$ the action is given by the scalar multiplication $g \cdot z = g(x)z$. In fact, by \cite[Proposition~2.6]{CrabbSutherland} there is a homeomorphism
\begin{equation}\label{equation:principal-map-bundle}
    \Map_\alpha(X, \bP(\bC^M \oplus V)) \cong \Gammacon(S(\cL \otimes (\bC^M \oplus V))) / \Map(X,U(1))
\end{equation}
where the orbit map is a principal $\Map(X,U(1))$-bundle. Using what we have done so far, we can thus precisely state and prove \cref{maintheorem:continuous} using the notation from \cref{theorem:malkiewich-tower}.
\begin{theorem}\label{theorem:final-continuous-answer}
Let $X$ be a finite dimensional finite CW complex. Let $\cL$ be a complex line bundle on $X$ with first Chern class $\alpha = c_1(\cL)$, and let $M > \frac{1}{2}\dim_{\mathrm{CW}}(X)$. Then the unitary functor 
\[
    V \longmapsto \Sigma^\infty_+ \Map_\alpha(X, \bP(\bC^M \oplus V))
\]
has a converging Weiss tower. Furthermore, its $k$th approximation in the Weiss tower is given by the formula
\[
    \Map_{(\catEpi_k^\opp, \{X^\bullet\})}(X^{\overline{\wedge} \bullet}, \Sigma^\infty_{X^\bullet}S(\cL \otimes (\bC^M \oplus V))^{\overline{\wedge} \bullet}) \sslash \Map(X,U(1))
\]
where the action of $\Map(X,U(1))$ is induced from the automorphism action on $\cL$. In particular, the $k$th derivative is given by
\[
    \Theta_k \simeq \left(\Ind_{U(1) \wr \Sigma_k}^{U(k)} \Sigma^{k(2M-1)}  \Conf_k(X, S(\cL))^{-kTX}\right) \sslash \Map(X,U(1))
\]
where $\Conf_k(X, S(\cL)) = S(\cL)^{\times k}|_{\Conf_k(X)}$ is the configuration space of $k$ points on $X$ with labels in the sphere bundle $S(\cL)$. The actions are given by
\begin{itemize}
    \item an element $\lambda \in U(1)$ acts by scalar multiplication by $\lambda^{-1}$ on $S(\cL)$,
    \item the symmetric group $\Sigma_k$ acts by permutation on the configurations, on the factors of $kTX = TX \oplus \cdots \oplus TX$, and on the suspensions factors $\Sigma^{k(2M-1)} = (\Sigma^{2M-1})^{\otimes k}$.
\end{itemize}
The group $U(1) \wr \Sigma_k = U(1)^k \rtimes \Sigma_k$ sits inside $U(k)$ via the permutation matrices (see the formula~\eqref{equation:permutation-matrices}), and $\Ind_{U(1) \wr \Sigma_k}^{U(k)}$ denotes the induced action. In the final homotopy quotient, $\Map(X,U(1))$ acts by automorphisms on $S(\cL)$.
\end{theorem}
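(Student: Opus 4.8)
The plan is to assemble the statement from pieces already in place: the principal bundle description~\eqref{equation:principal-map-bundle}, the comparison \cref{proposition:goodwillie-weiss-tower} between Goodwillie and Weiss calculus for restrictions along $r$, the identification of Malkiewich's tower with the Goodwillie tower of the section functor, and the fact that $\Map(X,U(1))$-homotopy orbits commute with the Weiss tower. Write $F = \Gamma(X,-)\colon \catS_*^X \to \catSp$, so that $F(E) = \Sigma^\infty_+\Gammacon(E)$, and let $r\colon\catJ\to\catS_*^X$ be $V\mapsto S(\cL\otimes(V\oplus\bC^M))$, so that the unitary functor $V\mapsto\Sigma^\infty_+\Gammacon(S(\cL\otimes(\bC^M\oplus V)))$ is exactly $r^*F$. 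The hypothesis $M>\tfrac12\dim_{\mathrm{CW}}(X)$ guarantees that $S(\cL\otimes\bC^M)$, hence every $S(\cL\otimes(\bC^M\oplus V))$, has a nowhere-vanishing section, so~\eqref{equation:principal-map-bundle} presents $\Map_\alpha(X,\bP(\bC^M\oplus V))$ as the base of a principal $\Map(X,U(1))$-bundle, naturally in $V$. Since $\Sigma^\infty_+$ sends the quotient of a free action to homotopy orbits, this gives an equivalence of unitary functors $\Sigma^\infty_+\Map_\alpha(X,\bP(\bC^M\oplus-))\simeq (r^*F)_{h\Map(X,U(1))}$, and it remains to compute the Weiss tower of the right-hand side.

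First I would record that $(-)_{h\Map(X,U(1))}$ commutes with $\tau_k$, and hence with $T_k$ and with the formation of layers and derivatives, by the argument of \cref{lemma:natural-u1-action}: homotopy orbits is a colimit, so it commutes with the sequential colimit building $T_k$ and with $\tau_k$, the latter being a finite limit -- equivalently a finite colimit in the stable $\infty$-category $\catSp$. Thus $T_k\bigl((r^*F)_{h\Map(X,U(1))}\bigr)\simeq\bigl(T_k(r^*F)\bigr)_{h\Map(X,U(1))}$, and similarly on layers. Now $F$ is analytic by \cref{proposition:malkiewich-connectivity,proposition:goodwillie-connectivity}, so \cref{proposition:goodwillie-weiss-tower} identifies the Weiss tower of $r^*F$ with the restriction along $r$ of the Goodwillie tower of $F$; since $A_k\Gamma(X,-)\simeq P_k\Gamma(X,-)$ for the section functor and $A_k\Gamma(X,-)$ is computed by \cref{theorem:malkiewich-tower}, we get
\[
    T_k(r^*F)(V)\simeq\Map_{(\catEpi_k^\opp,\{X^\bullet\})}\bigl(X^{\overline{\wedge} \bullet},\Sigma^\infty_{X^\bullet}S(\cL\otimes(\bC^M\oplus V))^{\overline{\wedge} \bullet}\bigr).
\]
Applying $(-)_{h\Map(X,U(1))}$ yields the asserted $k$th approximation. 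Convergence of the Weiss tower of $r^*F$ follows from analyticity -- the maps $r^*F(V)\to T_k(r^*F)(V)$ become arbitrarily highly connected as $k\to\infty$ for each fixed $V$, again using $M>\tfrac12\dim_{\mathrm{CW}}(X)$ -- and descends to $(r^*F)_{h\Map(X,U(1))}$ because homotopy orbits preserve connectivity of maps.

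To identify $\Theta_k$, note that the $k$th layer of the Weiss tower of $r^*F$ is $r^*(D_kF)$, with $D_kF$ the $k$th Goodwillie layer of $F$; by \cref{proposition:homogeneous-classification} together with \cref{corollary:malkiewich-layers} its Goodwillie derivative is $\partial_kF\simeq\bS^{-TX^k}\otimes\omega_!\bS^0$ over $X^k\sslash\Sigma_k$, where $\omega\colon\Conf_k(X)\hookrightarrow X^k\to X^k\sslash\Sigma_k$. Feeding $r$ into \cref{lemma:goodwillie-weiss-homogeneous} gives the Weiss derivative of $r^*D_kF$ as $\Ind_{(\Omega X\times U(1))\wr\Sigma_k}^{U(k)}\bigl(\Sigma^{k(2M-1)}\partial_kF\otimes\Sigma^\infty_X S(\cL)_+^{\boxtimes k}\bigr)$. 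Factoring this induction through $\Ind_{U(1)\wr\Sigma_k}^{U(k)}$ and the $\Omega X^k$-homotopy orbits (the kernel $\Omega X^k$ being normal with quotient $U(1)\wr\Sigma_k$), and repeating the projection-formula manipulations from the proof of \cref{corollary:malkiewich-layers} in the presence of the extra factor $\Sigma^\infty_X S(\cL)_+^{\boxtimes k}$, one rewrites the $\Omega X^k$-orbits of $\bS^{-TX^k}\otimes\omega_!\bS^0\otimes\Sigma^\infty_X S(\cL)_+^{\boxtimes k}$ as the Thom spectrum $\Conf_k(X,S(\cL))^{-kTX}$, still carrying its residual $U(1)^k\rtimes\Sigma_k$-action. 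Substituting back and applying $(-)_{h\Map(X,U(1))}$ to the whole layer produces the stated $\Theta_k$, the three displayed actions being read off directly from \cref{lemma:goodwillie-weiss-homogeneous,corollary:malkiewich-layers}.

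The main obstacle I expect is the bookkeeping of the four commuting actions -- of $\Omega X$, $U(1)$, $\Sigma_k$ and $\Map(X,U(1))$ -- through the various induction functors and the projection-formula rewriting, so as to land on exactly the labelled configuration model $\Conf_k(X,S(\cL))^{-kTX}$ with the asserted actions; concretely this means re-running the computation of \cref{corollary:malkiewich-layers} with the labelling bundle $S(\cL)_+^{\boxtimes k}$ and the additional $U(1)\wr\Sigma_k$-symmetry in play. The convergence claim and the commutation of $\Map(X,U(1))$-orbits with $T_k$ are, by contrast, routine consequences of the lemmas established in \cref{section:unitary-calculus,section:continuous}.
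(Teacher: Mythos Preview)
Your proposal is correct and follows essentially the same route as the paper: pass to homotopy orbits via the principal $\Map(X,U(1))$-bundle~\eqref{equation:principal-map-bundle}, commute these orbits with the Weiss tower (the paper phrases this through the parameterised calculus of \cref{subsection:parameterised-unitary-calculus}, but your direct colimit-versus-finite-limit argument is the same reasoning), identify the Weiss tower of $r^*F$ with the restricted Goodwillie/Malkiewich tower via \cref{proposition:goodwillie-weiss-tower} and \cref{theorem:malkiewich-tower}, and read off the derivatives from \cref{corollary:malkiewich-layers} and \cref{lemma:goodwillie-weiss-homogeneous} together with the projection-formula identification $\bigl(\bS^{-kTX}\otimes\omega_!\bS^0\otimes\Sigma^\infty_X S(\cL)_+^{\boxtimes k}\bigr)_{h\Omega X^k}\simeq\Conf_k(X,S(\cL))^{-kTX}$.
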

\begin{proof}
The orbits in \cref{equation:principal-map-bundle} are homotopy orbits as the action is free. Hence
\[
    \Sigma^\infty_+ \Map_\alpha(X, \bP(\bC^M \oplus V)) \simeq \big(\Sigma^\infty_+\Gammacon(S(\cL \otimes (\bC^M \oplus V))) \big) \sslash \Map(X,U(1)).
\]
The result then follows by using \cref{lemma:parameterised-colimit-tower,,theorem:malkiewich-tower,,proposition:goodwillie-weiss-tower} for the tower, and \cref{lemma:parameterised-colim-derivatives,,corollary:malkiewich-layers,,lemma:goodwillie-weiss-homogeneous} and the equivalence, obtained by applying the projection formula,
\[
    \big(\bS^{-kTX} \otimes \omega_!\bS^0 \otimes \Sigma^\infty_X S(\cL)_+^{\boxtimes k} \big)_{h\Omega X^k} \simeq \Conf_k(X, S(\cL))^{-kTX}
\]
for the derivatives. Convergence of the tower is immediate from convergence of the Goodwillie tower, which follows from convergence of the Malkiewich tower of the section space, see \cite[Example~4.1]{Malkiewich}.
\end{proof}

\subsection{Conclusion for holomorphic maps}\label{subsection:conclusion}

The first half of \cref{maintheorem:holomorphic} was proven as \cref{theorem:final-holomorphic-polynomiality}. Before proving the second half, let us recall the following definitions for the sake of completeness.
\begin{definition}
Let $\cL$ be a line bundle on a smooth complex projective variety $X$ and $k \geq 0$ be an integer. We say that $\cL$ \emph{separates $k$ points} if for any choice of $k+1$ distinct points $x_0,\dotsc,x_k \in X$ the evaluation map
\[
    \Gammahol(\cL) \lra \bigoplus_{i=0}^k \cL|_{x_i}, \quad s \longmapsto (s(x_0),\dotsc,s(x_k))
\]
is surjective.
\end{definition}
\begin{definition}
Let $X$ be a smooth complex projective variety, and let $\alpha \in \NS(X)$ be class in the Néron--Severi group of $X$. We define $d(\alpha) \in \bN \cup \{-\infty\}$ as
\[
    d(\alpha) = \sup \ \set{k \in \bN}{\forall [\cL] \in \Pic^\alpha(X), \ \cL \text{ separates $k$ points}}.
\]
When the set of the right-hand side is empty $d(\alpha) = - \infty$. By convention we also set $d(\alpha) = - \infty$ if $\alpha \notin \NS(X)$.
\end{definition}
For more details about this number $d(\alpha)$, we refer the reader to \cite[Section~3]{Aumonier}. The main result of \cite{Aumonier} shows that the inclusion
\[
    \Hol_\alpha(X,\bP(\bC^{n+1} \oplus V)) \subset \Map_\alpha(X,\bP(\bC^{n+1} \oplus V))
\]
induces an isomorphism in integral homology in the range of degrees $* < 2(d(\alpha)+1)\dim V +d(\alpha)-2$. Hence, by \cref{proposition:numerical-observation}, the second half of \cref{maintheorem:holomorphic} follows from \cref{theorem:final-continuous-answer}.

\section{Application: a stable splitting for rational maps}\label{section:splitting}

In this section we show how the stable splitting of Cohen--Cohen--Mann--Milgram \cite{CCMM}, obtained via homological computations by the authors, can be proven more conceptually using unitary calculus. We will consider pointed holomorphic maps, so we introduce some notation. Let $\infty \in \bC \cup \{\infty\} = \bP^1$ be the basepoint of the Riemann sphere. For any $V \in \catJ$, we choose the line $[\bC \oplus 0] \in \bP(\bC^2 \oplus V)$ as a basepoint. The space of \emph{rational maps} of degree $d \in \bZ \cong H^2(\bP^1;\bZ)$ is the space
\[
    \Rat_d(V) = \Hol_d^*(\bP^1, \bP(\bC^2 \oplus V))
\]
of pointed holomorphic maps of degree $d$. We regard it as a unitary functor in $V$. We first prove a refined version of \cref{theorem:final-holomorphic-polynomiality} in the special case of pointed maps.
\begin{lemma}\label{lemma:rat-polynomiality}
The unitary functor
\[
    V \longmapsto \Sigma^\infty_+\Rat_d(V)
\]
is $d$-polynomial.
\end{lemma}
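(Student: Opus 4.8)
The plan is to mimic the proof of \cref{theorem:holomorphic-polynomiality}, the point being that pointedness trades the role of $H^0(\bP^1,\cO(d))$ for that of $H^0(\bP^1,\cO(d-1))$, and $\dim H^0(\bP^1,\cO(d-1))=d$. Concretely, split $\bC^2\oplus V=\bC\oplus W$ with $\bC$ the basepoint line $[\bC\oplus0]$ and $W=\bC\oplus V$ its complement. A pointed holomorphic map $\bP^1\to\bP(\bC\oplus W)$ of degree $d$ with $f(\infty)=[\bC\oplus0]$ is a nowhere-vanishing section of $\cO(d)\otimes(\bC\oplus W)$ whose $W$-component vanishes at $\infty$ while its $\bC$-component does not, modulo scalars. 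Such a section is a pair $(s_0,B)$ with $s_0\in H^0(\bP^1,\cO(d))$ and $B\in H^0(\bP^1,\cO(d-1))\otimes W\cong\Hom(H^0(\bP^1,\cO(d-1))^\vee,W)$; choosing the slice $s_0(\infty)=1$ for the scalar action makes the $s_0$-coordinate range over the affine space $\bC^d$ of monic degree-$d$ polynomials, and the never-vanishing condition becomes that $\bP(\ker B)$ is disjoint from $\iota(R(s_0))$, where $R(s_0)\subset\bC\subset\bP^1$ is the (reduced) zero locus of $s_0$ and $\iota\colon\bP^1\to\bP(H^0(\bP^1,\cO(d-1))^\vee)=\bP^{d-1}$ is the degree-$(d-1)$ rational normal curve. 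By \cref{lemma:shifted-tower} the shift by $\bC\oplus-$ does not affect polynomiality, so after fixing a linear identification $H^0(\bP^1,\cO(d-1))^\vee\cong\bC^d$ it suffices to prove that $V\mapsto\Sigma^\infty_+\Psi(V)$ is $d$-polynomial, where
\[
    \Psi(V)=\set{(s_0,B)\in\bC^d\times\Hom(\bC^d,V)}{\bP(\ker B)\cap\iota(R(s_0))=\emptyset}.
\]

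This is now the content of \cref{theorem:holomorphic-polynomiality} with $N=d$, carrying the extra parameter $s_0\in\bC^d$ exactly as the Picard parameter $[\cL]\in\Pic^\alpha(X)$ was carried in \cref{theorem:final-holomorphic-polynomiality}. First I would stratify $\Psi(V)$ by the dimension $r$ of $\ker B$ (equivalently by the rank of $B$); since this is the usual rank stratification of linear maps restricted to the open set $\Psi(V)\subset\bC^d\times\Hom(\bC^d,V)$, \cref{lemma:normal-bundle-rank-matrices} supplies the normal bundles and one gets, for each $r$, a homotopy pushout square of unitary functors relating $\Psi_{<r}(V)$, $\Psi_{\le r}(V)$, the disc bundle $D(\nu_r(V))\simeq\Psi_r(V)$ and the sphere bundle $S(\nu_r(V))$. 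By \cref{lemma:polynomial-colimit-is-polynomial} and induction up the stratification it suffices to show $\Sigma^\infty_+\Psi_r(V)$ and $\Sigma^\infty_+S(\nu_r(V))$ are $d$-polynomial for every $r$. The kernel map exhibits $\Psi_r(V)$ as a fibre bundle, with fibre the Stiefel manifold $\Mono(\bC^d/K,V)$, over the $V$-independent space $\set{(s_0,K)\in\bC^d\times\Grass(r,\bC^d)}{\bP(K)\cap\iota(R(s_0))=\emptyset}$, an open subset of a Grassmann bundle, precisely as in \cref{lemma:fibrewise-functor} and \cref{theorem:final-holomorphic-polynomiality}. Hence by \cref{lemma:fibrewise-polynomiality}, the colimit lemmas of \cref{subsection:parameterised-unitary-calculus}, and Miller's splitting (\cref{lemma:main-disc-polynomial}), $\Sigma^\infty_+\Psi_r(V)$ is $(d-r)$-polynomial, hence $d$-polynomial. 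The sphere bundles $\Sigma^\infty_+S(\nu_r(V))$ are handled by the same nested induction as in \cref{theorem:holomorphic-polynomiality}, i.e.\ the analogues of \cref{lemma:induction-disc-polynomial,lemma:induction-sphere-polynomial,corollary:induction-pushout-sphere-polynomial}, with every Grassmannian and Stiefel manifold now living in $\bC^d$ and the parameter $s_0$ carried along fibrewise.

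The one point that needs genuine checking, and the only place I expect friction, is that the proof of \cref{theorem:holomorphic-polynomiality} is robust enough to accommodate two changes at once: the fixed smooth connected subvariety $\iota X\subset\bP^{N-1}$ is replaced by the \emph{varying finite subset} $\iota(R(s_0))\subset\bP^{d-1}$. For the first, inspecting that proof shows the embedded subset enters only through the loci $\Grass(r,N;\iota X)$ of subspaces disjoint from it; smoothness, connectedness, and completeness of the linear system are never used, so any closed subset will do. For the second, the dependence on $s_0$ is absorbed exactly as the dependence on $[\cL]$ in \cref{theorem:final-holomorphic-polynomiality}: one replaces each $\Grass(r,N;\iota X)$ by its universal version over $\bC^d$, which is still an open subset of a Grassmann bundle, and applies \cref{lemma:fibrewise-polynomiality} fibrewise over it. Everything else is formal, and the numerical payoff is simply that the Stiefel manifolds appearing are frames of size at most $d$ in $V$, so Miller's splitting yields $d$-polynomiality in place of the $(d+1)$-polynomiality of \cref{theorem:final-holomorphic-polynomiality}.
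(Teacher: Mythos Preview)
Your proposal is correct and follows the same route as the paper: both reduce to the rank-filtration argument of \cref{theorem:holomorphic-polynomiality} applied to linear maps out of $\bC^d$ rather than $\bC^{d+1}$. The packaging differs slightly---the paper keeps all coefficients in a single matrix $A\in\Hom(\bC^d,\bC^2\oplus V)$ with condition $\bP(\ker(\mathrm{id}_\bC+A))\cap\nu_d(\bP^1)=\emptyset$ and then simply asserts that the argument of \cref{theorem:holomorphic-polynomiality} applies, whereas you peel off the first row as the parameter $s_0$ so that the condition on $B$ reads exactly $\bP(\ker B)\cap\iota(R(s_0))=\emptyset$ and the appeal to that proof (with the extra parameter carried as in \cref{theorem:final-holomorphic-polynomiality}) is transparently justified.
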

\begin{remark}
Without the basepoint condition on the maps, the functor is $(d+1)$-polynomial as shown in \cref{theorem:final-holomorphic-polynomiality}. The point here is that we can do slightly better.
\end{remark}
\begin{proof}
Any holomorphic map $\bP^1 \to \bP(\bC^2 \oplus V)$ of degree $d$ is given by a tuple of polynomials $(f_0, \ldots, f_{\dim V +1})$ homogeneous in $x,y$ of degree $d$ and defined up to a non-zero complex scalar. It is pointed if
\[
    [f_0(1,0) : \ldots : f_{\dim V+1}(1,0)] = [1:0:\ldots:0] \in \bP(\bC^2 \oplus V).
\]
For a such a map, we can remove the scalar ambiguity in the definition of the $f_i$ by normalising and dehomogenising the polynomials. More precisely, we can assume that they are polynomials in one variable $x$ with $f_0$ monic of degree $d$ and the other $f_i$ of degree $\leq d-1$. In other words, we can write a rational map $f \colon \bP^1 \to \bP(\bC^2 \oplus V)$ as a tuple of polynomials $(f_0,\dotsc,f_{\dim V +1})$ of the form
\[
    f_0 = x^d + a_{d-1}^{(0)}x^{d-1} + \ldots + a_0^{(0)}
\]
and for $1 \leq i \leq \dim V +1$
\[
    f_i = a_{d-1}^{(i)}x^{d-1} + \ldots + a_0^{(i)}
\]
where the $a_j^{(i)} \in \bC$ are coefficients, and the $f_i$ do not vanish simultaneously. More functorially
\[
    \Rat_d(V) = \set{A \in \Hom(\bC^d, \bC^2 \oplus V)}{\bP(\ker(\mathrm{id}_\bC +A)) \cap \nu_d(\bP^1) = \emptyset}
\]
where $\nu_d \colon \bP^1 \hookrightarrow \bP^d$ is the $d$th Veronese embedding. In that identification, the linear map $A$ is in matrix form:
\[
A = \begin{pmatrix}
        a_{d-1}^{(0)} & \ldots & a_0^{(0)} \\
        a_{d-1}^{(1)} & \ldots & a_0^{(1)} \\
        \vdots & \ldots & \vdots \\
        a_{d-1}^{(\dim V+1)} & \ldots & a_0^{(\dim V +1)} \\
    \end{pmatrix}
\]
The argument in the proof of \cref{theorem:holomorphic-polynomiality} will show that the functor if $d$-polynomial: the crucial observation is that we have linear maps from $\bC^d$ instead of $\bC^{d+1}$.
\end{proof}

The following is the promised application.
\begin{theorem}[Cohen--Cohen--Mann--Milgram {\cite[Theorem~1.5]{CCMM}}]\label{theorem:unitary-ccmm}
The space of rational maps stably splits:
\[
    \Sigma^\infty_+ \Rat_d(V) \simeq \bigoplus_{i=0}^d \Sigma^{i\dim V}\left(\Conf_i(\bC)_+ \wedge (S^1)^{\wedge i}\right)_{\Sigma_i}
\]
\end{theorem}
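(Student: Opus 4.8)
The plan is to run the machinery of the previous sections and then split the resulting finite Weiss tower. \emph{First}, by \cref{lemma:rat-polynomiality} the functor $F(V) = \Sigma^\infty_+\Rat_d(V)$ is $d$-polynomial, so $F \simeq T_dF$ and the whole tower is the finite diagram $T_dF \to \cdots \to T_0F$ with layers $D_kF(V) = (\Theta_kF \otimes \bS^{kV})_{hU(k)}$, $1 \le k \le d$. The claimed right-hand side is a finite direct sum of homogeneous functors, the $i$-th summand being $i$-homogeneous with derivative $\Ind_{\Sigma_i}^{U(i)}(\Sigma^\infty_+\Conf_i(\bC) \otimes (\bS^1)^{\otimes i})$; hence (by \cref{lemma:polynomial-colimit-is-polynomial}) it is $d$-polynomial and coincides with its own Weiss tower, which is split with these summands as layers. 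So the theorem amounts to two claims: (a) $T_0F \simeq \bS$ and $\Theta_kF \simeq \Ind_{\Sigma_k}^{U(k)}(\Sigma^\infty_+\Conf_k(\bC) \otimes (\bS^1)^{\otimes k})$ for $1 \le k \le d$; and (b) the tower of $F$ splits.

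\emph{For (a)}, I would argue as in \cref{subsection:conclusion}: the inclusion $\Rat_d(V) = \Hol^*_d(\bP^1,\bP(\bC^2\oplus V)) \hookrightarrow \Map^*_d(\bP^1,\bP(\bC^2\oplus V))$ is highly connected — the pointed analogue of the main result of \cite{Aumonier}, with $d(\alpha) = d$ for $X = \bP^1$, gives connectivity growing like $2(d+1)\dim_\bC V$ — so by \cref{proposition:numerical-observation} (and $d$-polynomiality of $F$) the entire Weiss tower of $F$ agrees with that of $V \mapsto \Sigma^\infty_+\Map^*_d(\bP^1,\bP(\bC^2\oplus V))$. The latter one computes by the pointed analogue of \cref{section:continuous}: a pointed degree-$d$ continuous map $\bP^1 = S^2 \to \bP(\bC^2\oplus V)$ is a pointed section of $S(\cO(d)\otimes(\bC^2\oplus V))$ modulo the pointed gauge group $\Map_*(S^2, U(1))$, which is contractible since $U(1)\simeq K(\bZ,1)$; over $\bC = \bP^1\smallsetminus\{\infty\}$ the bundle $\cO(d)$ is trivial and the pointed condition confines the configurations in \cref{corollary:malkiewich-layers} to $\bC$, so they become configurations in $\bC$ with circle labels. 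Feeding this into \cref{lemma:goodwillie-weiss-homogeneous} with $M = 2$, and using \cref{lemma:untwisting} together with the fact that $TX$ is trivial of rank $2$ over $\bC$ so that $\Sigma^{k(2M-1)}$ cancels $\bS^{-kT\bC}$, yields the asserted $\Theta_kF$; the bottom term is $T_0F \simeq \Sigma^\infty_+\colim_V\Map^*_d(\bP^1,\bP(\bC^2\oplus V)) \simeq \Sigma^\infty_+\Omega^2_d\bP^\infty \simeq \bS$. A clean shortcut: since $\bP^m$ is simply connected with $2$-connected cover $S^{2m+1}$, one has $\Map^*_d(\bP^1,\bP(\bC^2\oplus V)) \simeq \Omega^2 S^{2\dim_\bC V + 3} = \Omega^2\Sigma^2 S^{2\dim_\bC V + 1}$, which makes the layers manifest.

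\emph{For (b)}, the genuinely non-formal point, the quickest argument uses the shortcut just mentioned: the functor $V \mapsto \Sigma^\infty_+\Omega^2\Sigma^2 S^{2\dim_\bC V + 1}$ carries the Snaith--Cohen--May stable splitting $\Sigma^\infty_+\Omega^2\Sigma^2 Y \simeq \bigvee_{k\ge 0}\Sigma^\infty(\Conf_k(\bC)_+\wedge_{\Sigma_k} Y^{\wedge k})$, natural in $Y$ and hence in $V$; for $Y = S^{2\dim_\bC V + 1}$ the $k$-th wedge summand is precisely the $k$-homogeneous functor with derivative $\Ind_{\Sigma_k}^{U(k)}(\Sigma^\infty_+\Conf_k(\bC)\otimes(\bS^1)^{\otimes k})$, so applying $T_d$ — which commutes with wedges and kills homogeneous functors of degree $> d$ — gives $\bigoplus_{i=0}^d\Sigma^{i\dim V}(\Conf_i(\bC)_+\wedge(S^1)^{\wedge i})_{\Sigma_i}$, and combining with (a) finishes the proof. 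Alternatively, staying inside the holomorphic picture, one may split the rank stratification $\Rat_d(V) = \Phi_0(V)\sqcup\cdots\sqcup\Phi_d(V)$ of \cref{lemma:rat-polynomiality}: each $\Phi_r(V)$ fibres over an open Grassmannian with Stiefel-manifold fibres, so Miller's stable splitting (used throughout \cref{section:polynomiality}) applied fibrewise splits $\Sigma^\infty_+\Phi_r(V)$, and the homotopy-pushout cofibre sequences $\Sigma^\infty_+\Phi_{<r}(V)\to\Sigma^\infty_+\Phi_{\le r}(V)\to\Sigma^\infty\mathrm{Th}(\nu_r(V))$ split for the same reason, whence $\Sigma^\infty_+\Rat_d(V) \simeq \bigvee_{r=0}^d\Sigma^\infty\mathrm{Th}(\nu_r(V))$, after which one identifies each Thom spectrum with the corresponding summand.

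\emph{The main obstacle} is (b). A priori the $k$-invariants of a $d$-polynomial Weiss tower need not vanish, so the splitting cannot be deduced from formal properties of unitary calculus; it has to be extracted from the geometry — either via the scanning identification with a double loop space of an odd-dimensional sphere and the Snaith splitting, or via a Miller-type argument on the rank stratification. The remaining ingredients are routine but must be checked: the pointed version of the connectivity estimate of \cite{Aumonier}, the contractibility of the pointed gauge group, the reduced configuration model over $\bC$, and the suspension bookkeeping in \cref{lemma:goodwillie-weiss-homogeneous}.
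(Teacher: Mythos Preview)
Your primary argument---Snaith splitting for $\Omega^2_d\bP(\bC^2\oplus V)$, the connectivity of the inclusion from \cite{Aumonier}, and $d$-polynomiality of $\Sigma^\infty_+\Rat_d$---is exactly the paper's proof; the paper simply skips your step~(a), since the Snaith splitting already exhibits the continuous side as a direct sum of homogeneous functors, so truncating at degree~$d$ gives both the derivatives and the splitting at once without any separate identification of $\Theta_kF$.

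Your \emph{alternative} for~(b), however, is not justified as written. Miller's splitting concerns a specific filtration of a single Stiefel manifold; it does not tell you that the cofibre sequences
\[
\Sigma^\infty_+\Phi_{<r}(V)\lra\Sigma^\infty_+\Phi_{\le r}(V)\lra\Sigma^\infty\mathrm{Th}(\nu_r(V))
\]
split, because the attaching maps between strata are not the Miller filtration maps and the identification of each Thom spectrum with the correct homogeneous summand is itself the content of the theorem. The paper's own second proof takes a different route entirely: it uses Segal's stabilisation maps $\Rat_d(V)\to\Rat_{d+1}(V)$, shows (again via the connectivity estimate and \cref{proposition:numerical-observation}) that they induce equivalences on $d$th Taylor approximations, and deduces that $\Sigma^\infty_+\Rat_i(V)$ is a retract of $\Sigma^\infty_+\Rat_{i+1}(V)$, which splits the tower by induction on~$i$. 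If you want an argument that stays inside the holomorphic picture and avoids the Snaith splitting, that is the one to use.
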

\begin{proof}
The Snaith splitting \cite{Cohen} is the equivalence
\[
    \Sigma^\infty_+ \Omega^2_d\bP(\bC^2 \oplus V) \simeq \bigoplus_{i=0}^\infty \Sigma^{i\dim V}\left(\Conf_i(\bC)_+ \wedge (S^1)^{\wedge i}\right)_{\Sigma_i}.
\]
Each summand is a homogeneous functor in the sense of unitary calculus. Hence we can interpret this result as saying that the Weiss tower of $\Sigma^\infty_+ \Omega^2_d\bP(\bC^2 \oplus V)$ splits. By \cite{Aumonier}, the map
\[
    \Rat_d(V) \hookrightarrow \Map_d^*(\bP^1, \bP(\bC^2 \oplus V)) = \Omega^2_d\bP(\bC^2 \oplus V)
\]
induces an isomorphism on homology in the range $* < 2(d+1)\dim_\bC(V) -d-2$. Hence by \cref{proposition:numerical-observation} the suspension spectra have the same $d$th polynomial approximation. The result follows as $\Sigma^\infty_+ \Rat_d(V)$ is already $d$-polynomial by \cref{lemma:rat-polynomiality}.
\end{proof}

\bigskip

To showcase another technique, we will now show how to recover the splitting of the Weiss tower of $\Sigma^\infty_+\Rat_d(V)$ without using the Snaith splitting. By recording the roots of the polynomials defining a rational map, Segal \cite[Section~4, p.~49]{Segal} has constructed stabilisations maps, essentially by gluing configurations of roots\footnote{More precisely, the disjoint union $\bigsqcup_{d \geq 0} \Rat_d(V)$ is given the structure of an algebra over the little 2-discs operad.},
\[
    \Rat_1(V) \lra \Rat_2(V) \lra \Rat_3(V) \lra \cdots
\]
which are functorial in $V \in \catJ$. This is how they interact with the Weiss towers:
\begin{lemma}\label{lemma:segal-stabilisation}
The stabilisation map
\[
    \Sigma^\infty_+\Rat_d(V) \lra \Sigma^\infty_+\Rat_{d+1}(V)
\]
induces an isomorphism between the $d$th polynomial approximations.
\end{lemma}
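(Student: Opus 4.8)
The plan is to compare $\Rat_d(V)$ with the component $\Omega^2_d\bP(\bC^2 \oplus V)$ of the double loop space, exactly as in the proof of \cref{theorem:unitary-ccmm}, and to use that Segal's stabilisation maps become, after this comparison, the standard translations between components of a double loop space. Recall from Segal's construction in \cite[Section~4]{Segal} that $\bigsqcup_{d \geq 0}\Rat_d(V)$ carries the structure of an algebra over the little $2$-discs operad, together with a map of such algebras $\bigsqcup_{d \geq 0}\Rat_d(V) \to \bigsqcup_{d \geq 0}\Omega^2_d\bP(\bC^2 \oplus V)$, everything natural in $V \in \catJ$; and that the stabilisation map $s_d \colon \Rat_d(V) \to \Rat_{d+1}(V)$ is, up to homotopy, multiplication by a fixed point of $\Rat_1(V)$ — for instance the class of the linear embedding $\bP^1 = \bP(\bC^2) \hookrightarrow \bP(\bC^2 \oplus V)$, which is natural in $V$. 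Writing $\iota_d \colon \Rat_d(V) \hookrightarrow \Omega^2_d\bP(\bC^2 \oplus V)$ for the inclusion, we therefore obtain a square of unitary functors, commuting up to natural homotopy,
\[
\begin{tikzcd}
\Rat_d(V) \arrow[r, "s_d"] \arrow[d, "\iota_d"'] & \Rat_{d+1}(V) \arrow[d, "\iota_{d+1}"] \\
\Omega^2_d\bP(\bC^2 \oplus V) \arrow[r, "\tau"] & \Omega^2_{d+1}\bP(\bC^2 \oplus V)
\end{tikzcd}
\]
in which $\tau$ is translation by the image of that fixed point. Since $\bP(\bC^2 \oplus V) = \bP^{\dim_\bC V + 1}$ is simply connected, all path components of its double loop space are homotopy equivalent, so $\tau$ is a homotopy equivalence, naturally in $V$.

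Next, apply $\Sigma^\infty_+$ and then the $d$th Weiss approximation $T_d$ to this square. It suffices to check that $T_d(\Sigma^\infty_+\iota_d)$ and $T_d(\Sigma^\infty_+\iota_{d+1})$ are equivalences: then $T_d(\Sigma^\infty_+\tau)$ is an equivalence, and from the commuting square $T_d(\Sigma^\infty_+ s_d) \simeq T_d(\Sigma^\infty_+\iota_{d+1})^{-1} \circ T_d(\Sigma^\infty_+\tau) \circ T_d(\Sigma^\infty_+\iota_d)$ is an equivalence too. For $\iota_d$, the main result of \cite{Aumonier} (as quoted in the proof of \cref{theorem:unitary-ccmm}) says that $\Rat_d(V) \to \Omega^2_d\bP(\bC^2 \oplus V)$ is a homology isomorphism in degrees $* < 2(d+1)\dim_\bC V - d - 2$, so the induced map of suspension spectra is $(2(d+1)\dim_\bC V + c)$-connected for a fixed integer $c$, and \cref{proposition:numerical-observation} gives that $T_d(\Sigma^\infty_+\iota_d)$ is an equivalence. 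For $\iota_{d+1}$, the same result applied with $d$ replaced by $d+1$ gives a homology isomorphism in degrees $* < 2(d+2)\dim_\bC V - d - 3$, which is still $\geq 2(d+1)\dim_\bC V + c'$ for a fixed integer $c'$ since $\dim_\bC V \geq 0$; hence \cref{proposition:numerical-observation}, applied with the same degree $d$, again yields an equivalence on $T_d$.

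The one step that genuinely requires care is the homotopy-commutativity of the square, i.e. that Segal's stabilisation map is, up to natural homotopy, translation by a degree-one class once one includes $\Rat_d(V)$ into the double loop space. This is essentially the compatibility of Segal's scanning/little-discs model of $\Rat_d(V)$ with the $\bE_2$-structure on $\Omega^2\bP(\bC^2 \oplus V)$, and I expect unwinding it to be the bulk of the write-up; everything else is the same formal package — naturality in $V$, together with two applications of \cref{proposition:numerical-observation} using the connectivity estimate of \cite{Aumonier} — that already underlies the proof of \cref{theorem:unitary-ccmm}.
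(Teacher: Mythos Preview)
Your proposal is correct and follows essentially the same approach as the paper: both build the same homotopy-commutative square comparing Segal's stabilisation with translation on $\Omega^2\bP(\bC^2\oplus V)$, and both use the connectivity estimate from \cite{Aumonier} together with \cref{proposition:numerical-observation}. The only cosmetic difference is that the paper first chases the homology range around the square to deduce that $s_d$ itself is a homology isomorphism in degrees $* < 2(d+1)\dim_\bC V - d - 2$ and then applies \cref{proposition:numerical-observation} once, whereas you apply \cref{proposition:numerical-observation} to each vertical map and use two-out-of-three; these are equivalent.
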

\begin{proof}
We use the inclusion
\[
    \Rat_d(V) \subset \Map_d^*(\bP^1, \bP(\bC^2 \oplus V)) = \Omega^2_d\bP(\bC^2 \oplus V).
\]
The loop space multiplication $\Omega^2_d \bP(\bC^2 \oplus V) \to \Omega^2_{d+1}\bP(\bC^2 \oplus V)$ with the fixed element in $\Rat_1(V) \subset \Omega^2_1\bP(\bC^2 \oplus V)$ corresponding to the stabilisation map fits into a homotopy commutative diagram:
\[
\begin{tikzcd}
\Rat_d(V) \arrow[d, hook] \arrow[r] & \Rat_{d+1}(V) \arrow[d, hook] \\
\Omega^2_d \bP(\bC^2 \oplus V) \arrow[r]   & \Omega^2_{d+1} \bP(\bC^2 \oplus V) 
\end{tikzcd}
\]
The bottom map is an isomorphism in homology (all components of the loop space are homotopy equivalent). The vertical maps induce isomorphisms in homology in the range at least $* < 2(d+1)\dim_\bC(V) -d-2$ by \cite{Aumonier}. Hence the stabilisation map at the top is also an isomorphism in homology in that range. Therefore it induces an isomorphism between the $d$th polynomial approximations by \cref{proposition:numerical-observation}.
\end{proof}

\begin{remark}
The lemma nearly follows from Segal's work. However the homological bound in his theorem is only optimal for maps to $\bP^1$ (this is a consequence of \cite{CCMM} which was observed then, but also of \cite{Aumonier}). Combined with \cref{proposition:numerical-observation} we would only be able to prove an isomorphism between the $(d-1)$st polynomial approximations.
\end{remark}

We can now prove again the stable splitting.
\begin{theorem}
The Weiss tower of $\Sigma^\infty_+ \Rat_d(V)$ splits.
\end{theorem}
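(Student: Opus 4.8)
The plan is to build the splitting directly from Segal's stabilisation maps, exploiting that polynomial approximation is exact on spectrum-valued functors. Write $F_d(V) = \Sigma^\infty_+ \Rat_d(V)$, which is $d$-polynomial by \cref{lemma:rat-polynomiality}. Set $C_0 = F_0$ (a $0$-homogeneous functor, since $\Rat_0(V)$ is a single point for every $V$), and for each $k \geq 1$ let $C_k = \mathrm{cofib}\!\left(s_{k-1} \colon F_{k-1} \to F_k\right)$ be the cofibre of Segal's stabilisation map. I claim each $C_k$ is $k$-homogeneous and each of these cofibre sequences splits; granting this, an induction on $k$ gives $F_k \simeq F_{k-1} \oplus C_k$, hence $F_d \simeq \bigoplus_{k=0}^d C_k$ with $C_k$ $k$-homogeneous. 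Since the partial sums $\bigoplus_{j \leq k} C_j$ then compute $T_k F_d$ and $C_k$ the $k$th layer, this is exactly the assertion that the Weiss tower of $\Sigma^\infty_+ \Rat_d(V)$ splits.

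First I would verify that $C_k$ is $k$-homogeneous. It is $k$-polynomial: $F_{k-1}$ and $F_k$ are (the former is even $(k-1)$-polynomial), a cofibre is a finite colimit, and \cref{lemma:polynomial-colimit-is-polynomial} shows that $k$-polynomial functors are closed under colimits. For the vanishing $T_{k-1} C_k \simeq 0$, recall that $T_{k-1}$ is a sequential colimit of the functors $\tau_{k-1}^{j}$, each computed pointwise by the finite limit of \cref{definition:polynomial}; hence $T_{k-1}$ commutes with cofibres in the stable $\infty$-category $\catFun(\catJ,\catSp)$, so $T_{k-1} C_k \simeq \mathrm{cofib}\!\left(T_{k-1} s_{k-1}\right)$. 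But $T_{k-1} s_{k-1}$ is an equivalence by \cref{lemma:segal-stabilisation}, so $T_{k-1} C_k \simeq 0$ and $C_k$ is $k$-homogeneous.

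Second I would split the cofibre sequence $F_{k-1} \to F_k \to C_k \overset{\partial}{\to} \Sigma F_{k-1}$. The boundary map $\partial$ is a natural transformation, i.e. a point of $\Map_{\catFun(\catJ,\catSp)}(C_k, \Sigma F_{k-1})$. Now $\Sigma F_{k-1}$ is $(k-1)$-polynomial (suspension is exact, so it preserves polynomiality) while $C_k$ is $k$-homogeneous, so the adjunction between $T_{k-1}$ and the inclusion of $(k-1)$-polynomial functors gives
\[
    \Map_{\catFun(\catJ,\catSp)}(C_k, \Sigma F_{k-1}) \simeq \Map(T_{k-1} C_k, \Sigma F_{k-1}) \simeq \Map(0, \Sigma F_{k-1}) \simeq 0 .
\]
Hence $\partial \simeq 0$, the sequence splits, and the induction above goes through, with the layers $C_k$ being the ones computed via \cref{maintheorem:holomorphic} — which in particular re-proves \cref{theorem:unitary-ccmm}.

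The argument is essentially formal once \cref{lemma:rat-polynomiality} and \cref{lemma:segal-stabilisation} are in hand; the one point that genuinely uses the stable setting — and would fail for space-valued functors — is that cofibres of maps of $(k-1)$-polynomial functors are $k$-polynomial and that $T_{k-1}$ commutes with cofibres, which is precisely what makes each $C_k$ homogeneous of the right degree. One also needs \cref{lemma:segal-stabilisation} in the degenerate case $d=0$ for the first cofibre sequence, but there $T_0 F_0 \simeq \bS \to T_0 F_1$ is readily seen to be an equivalence, so this causes no trouble.
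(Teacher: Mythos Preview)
Your proof is correct and follows essentially the same approach as the paper: both use Segal's stabilisation filtration, invoke \cref{lemma:rat-polynomiality} and \cref{lemma:segal-stabilisation} to see that the associated graded pieces are homogeneous, and then split each step. The only cosmetic difference is in the last move: the paper constructs the retraction $F_{i+1} \to T_iF_{i+1} \simeq T_iF_i \simeq F_i$ explicitly via the Taylor approximation map, whereas you phrase the same fact as the vanishing of the boundary $\partial \colon C_k \to \Sigma F_{k-1}$ using the $T_{k-1} \dashv \mathrm{incl}$ adjunction. These are two formulations of the same observation, and the inputs are identical.
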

\begin{proof}
The argument we use already appears in \cite[bottom of page~1210]{Arone} in the case of Miller's splitting. The stabilisation map gives a filtration:
\[
    \Sigma^\infty_+ \Rat_0(V) \lra \Sigma^\infty_+ \Rat_1(V) \lra \ldots \lra \Sigma^\infty_+ \Rat_d(V).
\]
\Cref{lemma:segal-stabilisation,,lemma:rat-polynomiality} show that the associated graded pieces are homogeneous functors. By taking polynomial approximations, we get a diagram where the rows are cofibre sequences:
\[
\begin{tikzcd}
\Sigma^\infty_+ \Rat_i(V) \arrow[d, "\simeq"'] \arrow[r] & \Sigma^\infty_+ \Rat_{i+1}(V) \arrow[d] \arrow[r] & \Sigma^\infty_+ \Rat_{i+1}(V) / \Sigma^\infty_+ \Rat_i(V) \arrow[d]     \\
T_i \Sigma^\infty_+ \Rat_i(V) \arrow[r, "\simeq"]        & T_i\Sigma^\infty_+ \Rat_{i+1}(V) \arrow[r]        & T_i(\Sigma^\infty_+ \Rat_{i+1}(V) / \Sigma^\infty_+ \Rat_i(V)) \simeq 0
\end{tikzcd}
\]
The bottom right functor is zero as the cofibre is $(i+1)$-homogeneous. Hence the bottom left map is an equivalence. The left vertical morphism is an equivalence by \cref{lemma:rat-polynomiality}. Hence $\Sigma^\infty_+ \Rat_i(V)$ is a retract of $\Sigma^\infty_+ \Rat_{i+1}(V)$ and the result follows by induction.
\end{proof}

\section{Case study}\label{section:case-study}

In this section, we have compiled some computations of derivatives of unitary functors of holomorphic maps. Our goal is to showcase how the approach developed in this paper looks like in concrete examples.

\subsection{Degree \texorpdfstring{$1$}{1} maps from \texorpdfstring{$\bP^n$}{Pn}}\label{subsection:pn-case}

Let us start with the easiest case in the theory: the unitary functor
\[
    V \longmapsto \Sigma^\infty_+ \Hol_1(\bP^n, \bP(\bC^{n+1} \oplus V))
\]
of holomorphic maps of degree $1$. There is a homeomorphism
\[
    \Hol_1(\bP^n, \bP(\bC^{n+1} \oplus V)) \cong \Gammahol(\cO_{\bP^n}(1) \otimes (\bC^{n+1} \oplus V) - 0) / \bC^\times,
\]
so, in light of \cref{subsection:polynomiality-full-mapping-space}, we will in fact work with the section space in the remainder of this section. We start with an observation.
\begin{lemma}
There is a homotopy equivalence
\[
    \Gammahol(\cO_{\bP^n}(1) \otimes (\bC^{n+1} \oplus V) - 0) \simeq U(\bC^{n+1} \oplus V) / U(V)
\]
with the Stiefel manifold of unitary $(n+1)$-frames in $\bC^{n+1} \oplus V$.
\end{lemma}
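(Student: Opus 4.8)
The plan is to turn this section space into a space of injective linear maps and then apply Gram--Schmidt, exactly as in the proof of \cref{theorem:holomorphic-polynomiality} but with all the strata collapsing to a single one. First I would record the relevant data for $\cL = \cO_{\bP^n}(1)$: global sections are linear forms, so $H^0(\bP^n, \cO_{\bP^n}(1)) \cong (\bC^{n+1})^\vee$ and hence $H^0(\bP^n, \cO_{\bP^n}(1))^\vee \cong \bC^{n+1}$. Moreover the embedding $\iota$ of \eqref{equation:ample-embedding} sends $[v] \in \bP^n = \bP(\bC^{n+1})$ to $\mathrm{ev}_{[v]}$, which under the canonical identification $((\bC^{n+1})^\vee)^\vee \cong \bC^{n+1}$ is just $v$ again; in other words $\iota$ is the identity of $\bP(\bC^{n+1})$, so $\iota \bP^n$ is the whole ambient projective space.

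Consequently, in the description \eqref{equation:correspondence-sections-linear-maps}, the condition $\bP(\ker A)\cap \iota\bP^n = \emptyset$ on a linear map $A\colon \bC^{n+1} \to \bC^{n+1}\oplus V$ is equivalent to $\ker A = 0$. Therefore
\[
    \Gammahol(\cO_{\bP^n}(1)\otimes(\bC^{n+1}\oplus V) - 0) \cong \Mono(\bC^{n+1},\bC^{n+1}\oplus V),
\]
the space of $\bC$-linear injections, and this identification is natural in $V$.

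It then remains to identify $\Mono(\bC^{n+1},\bC^{n+1}\oplus V)$ with the Stiefel manifold up to homotopy, which is the same argument as in \cref{lemma:main-disc-polynomial}. Gram--Schmidt gives a deformation retraction onto the subspace of linear isometries: the homotopy $A \mapsto A\bigl((1-t)\,\mathrm{id} + t(A^*A)^{-1/2}\bigr)$ stays in $\Mono$ because $(1-t)\,\mathrm{id} + t(A^*A)^{-1/2}$ is positive definite, hence invertible, and at $t=1$ it produces the isometry $A(A^*A)^{-1/2}$. The unitary group $U(\bC^{n+1}\oplus V)$ acts transitively on the set of linear isometries $\bC^{n+1}\hookrightarrow\bC^{n+1}\oplus V$, with stabiliser of the standard inclusion equal to $U(V)$ (the unitaries fixing $\bC^{n+1}$ pointwise), so this space of isometries is $U(\bC^{n+1}\oplus V)/U(V)$, the Stiefel manifold of unitary $(n+1)$-frames. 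No step here is genuinely difficult; the only point worth stressing is the elementary degeneration of the degree-$1$ Veronese embedding to the identity, which is what trivialises the kernel condition.
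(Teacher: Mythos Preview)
Your proof is correct and follows essentially the same approach as the paper: identify the section space with linear maps $\bC^{n+1}\to\bC^{n+1}\oplus V$, observe that never-vanishing corresponds to injectivity (full rank), and apply Gram--Schmidt to retract onto isometries. Your version is simply more detailed, spelling out why $\iota$ is the identity and giving the explicit deformation retraction and orbit--stabiliser identification that the paper leaves implicit.
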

\begin{proof}
Under the identification
\[
    \Gammahol(\cO_{\bP^n}(1) \otimes (\bC^{n+1} \oplus V)) \cong \Hom(\bC^{n+1}, \bC^{n+1} \oplus V),
\]
the never-vanishing sections corresponds to the linear maps of full rank. By the Gram--Schmidt process, this subspace is homotopy equivalent to the Stiefel manifold.
\end{proof}
By Miller's splitting \cite{Arone}, the Weiss derivatives of
\[
    V \longmapsto \Sigma^\infty_+ U(\bC^{n+1} \oplus V) / U(V)
\]
are given by
\[
    \Theta_k = \bS^{\mathrm{ad}_k} \otimes \Sigma^\infty_+ \Hom_\catJ(\bC^k, \bC^{n+1})
\]
where $\mathrm{ad}_k$ is the adjoint representation of $U(k)$, and $A \in U(k)$ acts on $f \in \Hom_\catJ(\bC^k,\bC^{n+1})$ via its inverse $(A \cdot f)(x) = f(A^{-1}x)$. 

\subsubsection{The first derivative}
The first derivative has to agree with the one given in \cref{maintheorem:holomorphic} (without the quotient by the group $\Map(\bP^n,U(1)) \simeq \bC^\times$ as we are only looking at the section space). Let us check this for good measure:
\begin{lemma}
There is an equivalence of $U(1)$-spectra
\[
    \bS^{\mathrm{ad}_1} \otimes \Sigma^\infty_+ \Hom_\catJ(\bC^1, \bC^{n+1}) \simeq \Sigma^{2n+1} S(\cO_{\bP^n}(1))^{-T\bP^n}.
\]
The action of $U(1)$ on the left term is as above, and recall that the action of $\lambda \in U(1)$ on the right term is by acting via scalar multiplication by $\lambda^{-1}$ on $S(\cO_{\bP^n}(1))$.
\end{lemma}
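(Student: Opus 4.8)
The plan is to match the two descriptions of $\Theta_1$ directly. The left-hand side is the first Weiss derivative of $V \mapsto \Sigma^\infty_+ U(\bC^{n+1}\oplus V)/U(V)$ computed through Miller's splitting, and the right-hand side is the one coming from \cref{theorem:final-continuous-answer} for the section space $V \mapsto \Sigma^\infty_+\Gammahol(\cO_{\bP^n}(1)\otimes(\bC^{n+1}\oplus V)-0)$; since these are the same unitary functor, the equivalence holds a priori by uniqueness of the Weiss tower, and the task of the lemma is simply to exhibit it by hand.

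First I would dispose of the formal identifications. As $U(1)$ is abelian, its adjoint representation is the trivial $1$-dimensional real one, so $\bS^{\mathrm{ad}_1}\simeq \bS^1$ with trivial $U(1)$-action; and $\Hom_\catJ(\bC^1,\bC^{n+1})$, the space of unitary $1$-frames, is the unit sphere $S(\bC^{n+1})=S^{2n+1}$, on which, unwinding $(\lambda\cdot f)(x)=f(\lambda^{-1}x)$ for $\bC$-linear $f$, the element $\lambda$ acts by $v\mapsto\lambda^{-1}v$. Thus the left term is $\Sigma\,\Sigma^\infty_+ S(\bC^{n+1})$ with the $\lambda^{-1}$-scaling action. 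Next I would identify $S(\cO_{\bP^n}(1))$, carrying the $U(1)$-action from \cref{theorem:final-continuous-answer} (multiplication by $\lambda^{-1}$ on the fibres of $\cO_{\bP^n}(1)$), with $S(\bC^{n+1})$ carrying a scaling action: under the metric isomorphism $\cO_{\bP^n}(1)|_{[v]}=(\bC v)^\vee\cong \bC v$ a unit covector becomes a unit vector of $\bC v$, the bundle projection becomes the Hopf map $S^{2n+1}\to\bP^n$, and $\lambda^{-1}$-scaling of covectors becomes $\lambda$-scaling of vectors, which by complex conjugation of $\bC^{n+1}$ is equivariantly the same as the $\lambda^{-1}$-scaling action above.

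The substance of the proof is then an equivariant untwisting of the Thom spectrum. Writing $p\colon S(\cO_{\bP^n}(1))\to\bP^n$, the virtual bundle $p^*(-T\bP^n)$ is $U(1)$-equivariantly stably trivial of rank $-2n$: the vertical tangent line of the Hopf fibration is trivialised equivariantly by the fundamental vector field of the $U(1)$-action, giving $TS^{2n+1}\cong p^*T\bP^n\oplus\underline{\bR}$, and the normal line of $S^{2n+1}\subset\bC^{n+1}$ is equivariantly trivial, giving $TS^{2n+1}\oplus\underline{\bR}\cong\underline{\bC^{n+1}}$; equivalently, the tautological section exhibits $p^*\cO_{\bP^n}(1)$ as equivariantly trivial, so the pulled-back Euler sequence $0\to\cO\to\cO(1)^{\oplus(n+1)}\to T\bP^n\to 0$ yields the same identity. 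Plugging this into $S(\cO_{\bP^n}(1))^{-T\bP^n}$ and absorbing the $\Sigma^{2n+1}$ prefactor, the right term collapses to $\Sigma\,\Sigma^\infty_+ S(\bC^{n+1})$ with the scaling action, which is the left term.

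The step I expect to be the real obstacle is this last one, carried out $U(1)$-equivariantly: one must keep careful track of which character twists each summand in the trivialisations (e.g.\ whether $T\bP^n$ carries the trivial action or a $\lambda^{\pm1}$-scaling), using that although $p^*\cO_{\bP^n}(1)$ is nontrivial over $\bP^n$ it becomes equivariantly trivial on $S(\cO_{\bP^n}(1))$ because that circle bundle has no section. If the bookkeeping becomes unwieldy, the clean fallback is the a priori argument above: both spectra are $\Theta_1$ of one and the same unitary functor, hence they agree, and the explicit match is then only an illustration of how the formula of \cref{maintheorem:holomorphic} meets the one coming from Miller's splitting.
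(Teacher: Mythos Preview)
Your proposal is correct and follows essentially the same route as the paper: identify $\bS^{\ad_1}\simeq\bS^1$ and $\Hom_\catJ(\bC,\bC^{n+1})\cong S^{2n+1}\cong S(\cO_{\bP^n}(1))$, then trivialise $p^*T\bP^n$ via the Euler sequence pulled back to the sphere bundle. Your extra care with the $U(1)$-equivariance and the a priori uniqueness fallback are welcome additions the paper omits; only the phrase ``because that circle bundle has no section'' is garbled---the reason $p^*\cO_{\bP^n}(1)$ trivialises on $S(\cO_{\bP^n}(1))$ is precisely that it acquires a tautological nowhere-vanishing section there.
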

\begin{proof}
An isometric embedding $\bC \to \bC^{n+1}$ is completely determined by the image of $1 \in \bC$, hence the homeomorphism of $U(1)$-spaces
\[
    \Hom_\catJ(\bC^1, \bC^{n+1}) \cong S(\bC^{n+1})
\]
where $\lambda \in U(1)$ acts by multiplication by $\lambda^{-1} = \overline{\lambda}$. Furthermore, the adjoint representation is trivial so $\bS^{\mathrm{ad}_1} = \bS^1$. We also recognise that $S(\bC^{n+1})$ is the total space of the sphere bundle of the tautological bundle $\cO_{\bP^n}(1)$. We thus have
\[
    \bS^{\mathrm{ad}_1} \otimes \Sigma^\infty_+ \Hom_\catJ(\bC^1, \bC^{n+1}) \simeq \Sigma^1 \Sigma^\infty_+S(\cO_{\bP^n}(1))
\]
so far. Finally, we use the isomorphism of bundles $T\bP^n \oplus \underline{\bC} \cong \cO_{\bP^n}(1)^{\oplus n+1}$ on $\bP^n$. When pulled back to the sphere bundle $S(\cO_{\bP^n}(1))$, the line bundle $\cO_{\bP^n}(1)$ is canonically trivialised. Hence $-T\bP^n = -n\underline{\bC}$ as virtual bundles on $S(\cO_{\bP^n}(1))$. Therefore
\[
    \Sigma^1 \Sigma^\infty_+S(\cO_{\bP^n}(1)) \simeq \Sigma^{2n+1} \Sigma^\infty S(\cO_{\bP^n}(1))^{-n\underline{\bC}} \simeq \Sigma^{2n+1} S(\cO_{\bP^n}(1))^{-T\bP^n}. \qedhere
\]
\end{proof}

\subsubsection{The second derivative}
The second derivative $\Theta_2$ is unstable in the sense that it has a priori no reason to agree with the one given in \cref{maintheorem:continuous}, namely
\[
    \widetilde{\Theta}_2 = \Ind_{U(1) \wr \Sigma_2}^{U(2)} \Sigma^{2(2n+1)}  \Conf_2(\bP^n, S(\cO_{\bP^n}(1)))^{-2T\bP^n}.
\]
Let us indeed show that they are different. For concreteness, we take $n = 1$ and look at the second layers of the Weiss towers of
\begin{equation}\label{equation:secondderivative-hol}
    V \longmapsto \Sigma^\infty_+\Gammahol(\cO_{\bP^1}(1) \otimes (\bC^2 \oplus V)-0)
\end{equation}
and
\begin{equation}\label{equation:secondderivative-con}
    V \longmapsto \Sigma^\infty_+\Gammacon(\cO_{\bP^1}(1) \otimes (\bC^2 \oplus V)-0)
\end{equation}
The former is the spectrum
\[
    \bS^{\mathrm{ad}_2} \otimes \Sigma^\infty_+ \Hom_\catJ(\bC^2, \bC^2) \otimes_{U(2)} \bS^{2V} \simeq \bS^{\mathrm{ad}_2} \otimes \Sigma^\infty_+ U(2) \otimes_{U(2)} \bS^{2V} \simeq \bS^4 \otimes \bS^{2V},
\]
using that $\ad_2$ is a real $4$-dimensional representation. The latter is the spectrum
\[
    \widetilde{\Theta}_2 \otimes_{U(2)} \bS^{2V} \simeq \Sigma^3\Sigma^3  \Conf_2(\bP^1, S(\cO_{\bP^1}(1)))^{-2T\bP^1} \otimes_{U(1) \wr \Sigma_2} \bS^{2V},
\]
where we have written $\Sigma^3\Sigma^3$ instead of $\Sigma^6$ to remember that $\Sigma_2$ acts by swapping the suspensions. We will show that for $V = 0$, the integral homologies of these two spectra are different in degree $2$. We compute
\begin{align*}
    (\widetilde{\Theta}_2 \otimes_{U(2)} \bS^{0}) \otimes \bZ &\simeq \left(\Sigma^3\Sigma^3  \Conf_2(\bP^1, S(\cO_{\bP^1}(1)))^{-2T\bP^1} \right)_{h(U(1) \wr \Sigma_2)} \otimes \bZ \\
    &\simeq \Sigma^2 \left(\bZ^{\mathrm{\pm}} \otimes  \Conf_2(\bP^1, S(\cO_{\bP^1}(1)))_+ \right)_{h(U(1) \wr \Sigma_2)},
\end{align*}
where $\bZ^{\pm}$ is the sign representation, and we have used the Thom isomorphism. Thus
\begin{align*}
    H_2\big(\widetilde{\Theta}_2 \otimes_{U(2)} \bS^{0}; \bZ\big) &\cong H_0\big((\Conf_2(\bP^1, S(\cO_{\bP^1}(1)))_+)_{h(U(1) \wr \Sigma_2)};\bZ^\pm\big) \\
    &\cong H_0\big(\UConf_2(\bP^1);\bZ^\pm\big) \\
    &\cong H_0\big(B\Sigma_2; \bZ^\pm\big)\\
    &\cong \bZ/2.
\end{align*}
On the other hand,
\[
    H_2(\bS^4 \otimes \bS^{0}; \bZ) = 0.
\]

\subsection{Degree \texorpdfstring{$(1,1)$}{(1,1)} maps from \texorpdfstring{$\bP^1\times\bP^1$}{P1xP1}}\label{subsection:p1p1-case}
We now study the space of degree $(1,1)$ holomorphic maps from a product of projective lines $\bP^1\times\bP^1$, or more precisely the unitary functor
\begin{equation}\label{equation:functor-sections-p1p1}
    V \longmapsto \Sigma^\infty_+ \Gammahol(\cO_{\bP^1\times\bP^1}(1,1) \otimes (\bC^4 \oplus V) - 0).
\end{equation}
This is the simplest case which exemplifies most of the key ideas of \cref{section:polynomiality}, so we spend some time to flesh them out concretely. Let $[x:y]$ and $[w:z]$ be the homogeneous coordinates on the two copies of $\bP^1$. A global holomorphic section of $\cO_{\bP^1\times\bP^1}(1,1)$ is a homogeneous polynomial
\[
    f = a xw + b xz + c yw + d yz
\]
with $a,b,c,d \in \bC$. We may thus represent a section of $\cO_{\bP^1\times\bP^1}(1,1) \otimes (\bC^4 \oplus V)$ as a tuple $(f_1,\dotsc,f_{v+4})$ of homogeneous polynomials, with $v = \dim V$. We collect their coefficients in a matrix
\[
    \begin{pmatrix}
        a_1 & \ldots & a_{v+4} \\
        b_1 & \ldots & b_{v+4} \\
        c_1 & \ldots & c_{v+4} \\
        d_1 & \ldots & d_{v+4} \\
    \end{pmatrix}
\]
which we like to regard as a linear map $\bC^4 \lra \bC^4 \oplus V$. Observe that such a linear map corresponds to a nowhere vanishing section if and only if its kernel $K \subset \bC^4$ is such that $\bP(K) \cap Q = \emptyset$, where $Q \subset \bP^3$ is the Segre quadric, i.e. the image of the embedding $\bP^1 \times \bP^1 \hookrightarrow \bP^3$ given by the complete linear system $|\cO_{\bP^1\times\bP^1}(1,1)|$. As $Q \subset \bP^3$ has codimension $1$, any such $K$ is forced to have dimension either $0$ or $1$. We thus obtain a stratification
\begin{align*}
    \Gammahol(\cO_{\bP^1\times\bP^1}(1,1) \otimes (\bC^4 \oplus V) - 0) &\cong \set{A \in \Hom(\bC^4, \bC^4 \oplus V)}{\bP(\ker A) \cap Q = \emptyset} \\
    &= \set{A}{\ker A = 0} \sqcup \set{A}{\dim \ker A = 1 \text{ and } \bP(\ker A) \not\in Q}.
\end{align*}
The first piece is open inside the total space, and the second is a closed submanifold. Let us denote by $\nu$ its normal bundle, and by $S(\nu)$ and $D(\nu)$ the associated sphere and disc bundles. We have a homotopy pushout square of spaces
\[
\begin{tikzcd}
S(\nu) \arrow[r, hook] \arrow[d]    & D(\nu) \simeq \set{A}{\dim \ker A = 1 \text{ and } \bP(\ker A) \not\in Q} \arrow[d]    \\
\set{A}{\ker A = 0} \arrow[r, hook] & {\set{A \in \Hom(\bC^4, \bC^4 \oplus V)}{\bP(\ker A) \cap Q = \emptyset}}
\end{tikzcd}
\]
expressing that the total space is obtained from the open piece by gluing in a tubular neighbourhood of the closed submanifold. After applying $\Sigma^\infty_+$ to every space in sight, the Weiss derivatives of the pushout can be computed as the pushout of the derivatives. The bottom left corner is the functor
\[
    V \longmapsto \Sigma^\infty_+\set{A \in \Hom(\bC^4, \bC^4 \oplus V)}{\ker A} \simeq \Sigma^\infty_+ U(\bC^4\oplus V)/U(V)
\]
so we know its Weiss tower by Miller's splitting. The top right corner fibres via the kernel map
\[
    \set{A \in \Hom(\bC^4, \bC^4 \oplus V)}{\dim \ker A = 1 \text{ and } \bP(\ker A) \not\in Q} \overset{\ker}{\lra} \bP^3 - Q
\]
with fibre above $[K] \in \bP^3 - Q$ the Stiefel manifold
\begin{equation}\label{equation:p1p1-disc}
    \set{A \in \Hom(\bC^4, \bC^4 \oplus V)}{\ker A = K} \simeq \Hom_\catJ(\bC^4/K, \bC^4 \oplus V).
\end{equation}
Hence, Miller's splitting combined with \cref{lemma:parameterised-colim-derivatives} tells us that the derivatives of the functor
\[
    V \longmapsto \Sigma^\infty_+\set{A \in \Hom(\bC^4, \bC^4 \oplus V)}{\dim \ker A = 1 \text{ and } \bP(\ker A) \not\in Q}
\]
are given by (the shift by $\bS^K$ comes from \cref{lemma:shifted-tower})
\[
    \Theta_j = \colim_{[K] \in \bP^3 - Q} \bS^{\ad_j} \otimes \bS^{jK} \otimes \Sigma^\infty_+\Hom_\catJ(\bC^j, \bC^4/K).
\]
Likewise $S(\nu)$ fibres over $\bP^3 - Q$ with fibre above $[K] \in \bP^3 - Q$
\begin{equation}\label{equation:p1p1-sphere}
    \set{(A \in \Hom(\bC^4, \bC^4 \oplus V), \varphi \colon \ker A \to (\image A)^\perp)}{\ker A = K} \simeq \Hom_\catJ(\bC^4, \bC^4 \oplus V),
\end{equation}
where we have used the formula for the normal bundle recalled in \cref{lemma:normal-bundle-rank-matrices}. Once again, Miller's splitting gives us the desired Weiss derivatives. To summarise, the $j$th Weiss derivative of the functor~\eqref{equation:functor-sections-p1p1} is the pushout of spectra
\begin{equation}\label{equation:p1p1-pushout}
\begin{tikzcd}
{\colim\limits_{[K] \in \bP^3 - Q} \bS^{\ad_j} \otimes \Sigma^\infty_+\Hom_\catJ(\bC^j, \bC^4)} \arrow[d] \arrow[r] & {\colim\limits_{[K] \in \bP^3 - Q} \bS^{\ad_j} \otimes \bS^{jK} \otimes \Sigma^\infty_+\Hom_\catJ(\bC^j, \bC^4/K)} \\
{\bS^{\ad_j} \otimes \Sigma^\infty_+\Hom_\catJ(\bC^j, \bC^4)}  & 
\end{tikzcd}
\end{equation}
Observe in particular that for $j \geq 5$ all the spectra in the diagram vanish. Hence we see that our functor is polynomial of degree $4$.
To understand the top morphism, we need to inspect the proof Miller's splitting in \cite{Miller}. The consequence is the following.
\begin{lemma}
The morphism
\begin{equation}\label{equation:p1p1-collapse-derivative}
    \colim\limits_{[K] \in \bP^3 - Q} \bS^{\ad_j} \otimes \Sigma^\infty_+\Hom_\catJ(\bC^j, \bC^4) \lra \colim\limits_{[K] \in \bP^3 - Q} \bS^{\ad_j} \otimes \bS^{jK} \otimes \Sigma^\infty_+\Hom_\catJ(\bC^j, \bC^4/K)
\end{equation}
is induced from the Thom collapse map
\[
    \Hom_\catJ(\bC^j, \bC^4)_+ \lra \Hom_\catJ(\bC^j, \bC^4/K)_+ \wedge S^{jK}
\]
associated to the inclusion
\[
    \Hom_\catJ(\bC^j, \bC^4/K) \hookrightarrow \Hom_\catJ(\bC^j, \bC^4),
\]
where we use the identification $\bC^4/K \cong K^\perp \subset \bC^4$.
\end{lemma}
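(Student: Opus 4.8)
The plan is to reduce the morphism \eqref{equation:p1p1-collapse-derivative} to a fibrewise statement over $\bP^3-Q$, and then read off the map on $j$th Weiss layers from Miller's construction of his splitting.

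\emph{Fibrewise reduction.} Viewing $S(\nu)$ and $D(\nu)$ as parameterised spaces over $\bP^3-Q$ via the kernel map, \cref{lemma:parameterised-colim-derivatives} shows that \eqref{equation:p1p1-collapse-derivative} is the colimit over $[K]\in\bP^3-Q$ of the map on $j$th Weiss derivatives induced by the fibrewise map $S(\nu)_{[K]}\to D(\nu)_{[K]}$: the inclusion of the sphere bundle of $\nu$ into the disc bundle, followed by the deformation retraction of $D(\nu)_{[K]}$ onto its zero section $\{A:\ker A=K\}$. So it suffices to identify this fibrewise map, for a fixed $[K]$, with $\bS^{\ad_j}$ smashed with the suspension of the asserted Thom collapse.

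\emph{Geometry of the fibrewise map.} By \cref{lemma:normal-bundle-rank-matrices} the restriction of $\nu$ to $\{A:\ker A=K\}$ has fibre $\Hom(K,(\bC^4\oplus V)/\image A)$ over $A$; since $\dim_\bC K=1$, a choice of unit generator of $K$ together with the metric identifies this with the orthogonal-complement bundle $A\mapsto(\image A)^\perp$ over the Stiefel manifold $\{A:\ker A=K\}\simeq\Hom_\catJ(\bC^4/K,\bC^4\oplus V)$ of \eqref{equation:p1p1-disc}. Hence, under the metric splitting $\bC^4=\bC^4/K\oplus K$, the total space of its sphere bundle is $\Hom_\catJ(\bC^4,\bC^4\oplus V)$ as in \eqref{equation:p1p1-sphere}, and the bundle projection becomes the map of unitary functors $V\mapsto\big(\Hom_\catJ(\bC^4,\bC^4\oplus V)\to\Hom_\catJ(\bC^4/K,\bC^4\oplus V)\big)$ that restricts a $4$-frame to its $\bC^4/K$-part — that is, forgets the last ($K$-indexed) frame vector. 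Write $f_{[K]}$ for this map.

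\emph{The map on $j$th layers.} Miller's splitting \cite{Miller,Arone} gives the $j$th derivative of $V\mapsto\Sigma^\infty_+\Hom_\catJ(\bC^4,\bC^4\oplus V)$ as $\bS^{\ad_j}\otimes\Sigma^\infty_+\Hom_\catJ(\bC^j,\bC^4)$ and, writing $\bC^4\oplus V=\bC^4/K\oplus(K\oplus V)$ and applying \cref{lemma:shifted-tower} to the shift by $K$, the $j$th derivative of $V\mapsto\Sigma^\infty_+\Hom_\catJ(\bC^4/K,\bC^4\oplus V)$ as $\bS^{jK}\otimes\bS^{\ad_j}\otimes\Sigma^\infty_+\Hom_\catJ(\bC^j,\bC^4/K)$; these are the two sides of \eqref{equation:p1p1-collapse-derivative} before the colimit. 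To compute $\Theta_j(f_{[K]})$ one inspects the construction of the splitting in \cite{Miller}. It arises from the filtration of $\Hom_\catJ(\bC^k,\bC^k\oplus V)$ by the dimension of the kernel of the composite $\bC^k\xrightarrow{A}\bC^k\oplus V\twoheadrightarrow V$, and the projection onto the $j$th homogeneous piece is the Pontryagin--Thom collapse onto the Thom space of the normal bundle of the stratum where this kernel is $j$-dimensional; on that stratum $A$ carries the kernel isometrically into the $\bC^k$-summand, which is the source of the $\Hom_\catJ(\bC^j,\bC^k)$ in the derivative. These collapses are natural for restricting a frame to a coordinate subspace $\bC^{k_1}\subset\bC^{k_1}\oplus\bC^{k_2}=\bC^k$: the relevant stratum for the restricted frame embeds in the one for the original frame with normal bundle the trivial bundle $\bC^j\otimes\bC^{k_2}$, because a $j$-frame in $\bC^{k_1}$ can be perturbed into the $\bC^{k_2}$-directions. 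Applying this with $k_1=\dim_\bC(\bC^4/K)=3$ and $k_2=\dim_\bC K=1$ identifies $\Theta_j(f_{[K]})$ with $\bS^{\ad_j}$ smashed with the suspension of the Pontryagin--Thom collapse for $\Hom_\catJ(\bC^j,\bC^4/K)\cong\Hom_\catJ(\bC^j,K^\perp)\hookrightarrow\Hom_\catJ(\bC^j,\bC^4)$, whose normal bundle is the trivial bundle $jK=\bC^j\otimes K$; this is exactly the Thom collapse in the statement. Taking the colimit over $[K]\in\bP^3-Q$ finishes the proof.

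\emph{The main obstacle} is this last identification: it is not formal, and one must engage with Miller's explicit model for the splitting. Equivalently, since Miller's proof proceeds by induction along precisely the sphere-bundle cofibre sequences relating $\Sigma^\infty_+S(\nu)_{[K]}$, $\Sigma^\infty_+\{A:\ker A=K\}$ and the Thom space of $\nu$ over $\{A:\ker A=K\}$, the required compatibility can be extracted from that induction.
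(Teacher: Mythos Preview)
Your approach is essentially the paper's: reduce to the fibre over a fixed $[K]\in\bP^3-Q$, identify the map $S(\nu)_{[K]}\to D(\nu)_{[K]}$ with the frame-restriction $\Hom_\catJ(\bC^4,\bC^4\oplus V)\to\Hom_\catJ(\bC^4/K,\bC^4\oplus V)$, and then read off the induced map on $j$th layers from Miller's explicit construction.

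The one genuine slip is your description of Miller's filtration. Miller does \emph{not} filter $\Hom_\catJ(\bC^k,\bC^k\oplus V)$ by the dimension of the kernel of the projection to $V$; writing $A=(\varphi,\psi)$ with $\varphi\colon\bC^k\to\bC^k$ and $\psi\colon\bC^k\to V$, his filtration is
\[
    F^j=\set{A}{\dim\ker(\varphi-\mathrm{id})\geq k-j}.
\]
These are genuinely different: one always has $\ker(\varphi-\mathrm{id})\subset\ker\psi$ because $A$ is an isometry, but not conversely---for instance at $V=0$ your filtration collapses entirely (since $\psi=0$), whereas Miller's is the eigenvalue-$1$ stratification of $U(k)$ and produces all the layers. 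So your filtration cannot be the Weiss tower, and the naturality argument you run through it does not stand as written. The paper instead writes out $F^\bullet$ and the analogous filtration $R^\bullet$ on $\Hom_\catJ(\bC^4/K,\bC^4/K\oplus K\oplus V)$, checks directly that frame-restriction carries $F^j$ into $R^j$, and identifies the induced map $F^j/F^{j-1}\to R^j/R^{j-1}$ as the Thom collapse via Arone's description of the graded pieces. Your final identification of the normal bundle of $\Hom_\catJ(\bC^j,\bC^4/K)\hookrightarrow\Hom_\catJ(\bC^j,\bC^4)$ as the trivial bundle $jK$ is correct and is exactly what is needed; the route to it should go through Miller's actual filtration rather than the one you describe.
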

\begin{proof}[Sketch of a proof]
In \cite[Section~2]{Miller}, the following filtration is defined
\[
    F^j\Hom_\catJ(\bC^4, \bC^4 \oplus V) = \set{(\varphi \colon \bC^4 \to \bC^4, \psi \colon \bC^4 \to V)}{\dim \ker (\varphi - \mathrm{id}) \geq 4-j},
\]
and Miller shows that $F^j - F^{j-1}$ is homeomorphic to the total space of the bundle $\ad_j \oplus \Hom(\bC^j,V)$ on $\Grass(j,\bC^4)$. We have likewise a filtration $R^\bullet$ on $\Hom_\catJ(\bC^4/K, \bC^4/K \oplus K \oplus V)$ with
\[
    R^j = \set{(\varphi \colon \bC^4/K \to \bC^4/K, \psi \colon \bC^4/K \to K \oplus V)}{\dim \ker (\varphi - \mathrm{id}) \geq 3-j},
\]
and such that $R^j-R^{j-1}$ is homeomorphic to the total space of $\ad_j \oplus \Hom(\bC^j,K \oplus V)$ on $\Grass(j,\bC^4/K)$. Under the identifications~\eqref{equation:p1p1-disc} and~\eqref{equation:p1p1-sphere}, the morphism $S(\nu) \to D(\nu)$ induces a map which preserves the filtrations. Hence, Miller's splitting asserts that the morphism between the $j$th derivatives is the suspension of the natural map $F^j / F^{j-1} \to R^j / R^{j-1}$. Under the equivalences
\[
    F^j / F^{j-1} \simeq (S^{\ad_j} \wedge \Hom_\catJ(\bC^j, \bC^4)_+ \wedge S^{jV})_{hU(j)}
\]
and
\[
    R^j / R^{j-1} \simeq (S^{\ad_j} \wedge \Hom_\catJ(\bC^j, \bC^4/K)_+ \wedge S^{j(K \oplus V)})_{hU(j)}
\]
noticed by Arone \cite{Arone}, this map is induced by the Thom collapse as desired.
\end{proof}
As a consequence\footnote{If $Z \subset Y$ is a closed smooth submanifold with normal bundle $\mu$, then there is a cofibre sequence of pointed spaces $(Y-Z)_+ \to Y_+ \to \mathrm{Th}(\nu)$.}, the fibre of the morphism~\eqref{equation:p1p1-collapse-derivative} is the spectrum
\[
    \colim\limits_{[K] \in \bP^3 - Q} \bS^{\ad_j} \otimes \Sigma^\infty_+\big(\Hom_\catJ(\bC^j, \bC^4) - \Hom_\catJ(\bC^j, \bC^4/K)\big).
\]
Putting this back into the diagram~\eqref{equation:p1p1-pushout}, we conclude that the $j$th derivative $\Theta_j$ of our functor~\eqref{equation:functor-sections-p1p1} sits in the cofibre sequence
\[
    \colim\limits_{[K] \in \bP^3 - Q} \bS^{\ad_j} \otimes \Sigma^\infty_+\big(\Hom_\catJ(\bC^j, \bC^4) - \Hom_\catJ(\bC^j, \bC^4/K)\big) \to \bS^{\ad_j} \otimes \Sigma^\infty_+\Hom_\catJ(\bC^j, \bC^4) \to \Theta_j.
\]
By inspection of our constructions, the first morphism is the natural morphism induced by the subspace inclusion $\Hom_\catJ(\bC^j, \bC^4) - \Hom_\catJ(\bC^j, \bC^4/K) \hookrightarrow \Hom_\catJ(\bC^j, \bC^4)$.

\begin{example}
For $j = 1$, the subspace inclusion is equivalent to $S(\bC^4/K) \hookrightarrow S(\bC^4) \simeq S(\bC^4/K) \ast S(K)$, hence the complement is homotopy equivalent to $S(K)$. Furthermore, 
\[
    \colim\limits_{[K] \in \bP^3 - Q} S(K) \simeq S(\cO_{\bP^3 - Q}(1))
\]
and $\Hom_\catJ(\bC^1, \bC^4) \cong S(\cO_{\bP^3}(1))$ (see the previous \cref{subsection:pn-case}). Hence
\[
    \Theta_1 \simeq \bS^{\ad_1} \otimes \frac{\Sigma^\infty_+ S(\cO_{\bP^3 - Q}(1))}{\Sigma^\infty_+ S(\cO_{\bP^3}(1))} \simeq \Sigma^1\Sigma^\infty\frac{S(\cO_{\bP^3 - Q}(1))}{S(\cO_{\bP^3}(1))}.
\]
This can be further simplified by noticing that the cofibre of the inclusion $S(\cO_{\bP^3 - Q}(1)) \subset S(\cO_{\bP^3}(1))$ is the Thom space $S(\cO_{Q}(1))^{\nu(Q \hookrightarrow \bP^3)}$ where $\nu(Q \hookrightarrow \bP^3)$ is the normal bundle of the inclusion (implicitly pulled back to the sphere bundle). Now, as in the previous section, we have
\[
    \nu(Q \hookrightarrow \bP^3) = T\bP^3 - TQ = \cO(1)^{\oplus 4} - \underline{\bC} - TQ = \underline{\bC}^3 - TQ
\]
as virtual bundles on $S(\cO_{Q}(1))$. Thus
\[
    \Theta_1 \simeq \Sigma^7 S(\cO_Q(1))^{-TQ} = \Sigma^7 S(\cO_{\bP^1 \times \bP^1}(1))^{-T(\bP^1 \times \bP^1)}
\]
which indeed agrees with the answer given in \cref{maintheorem:holomorphic}.
\end{example}

\begin{example}
For $j = 4$, $\Hom_\catJ(\bC^j, \bC^4/K) = \emptyset$ and $\Hom_\catJ(\bC^j,\bC^4) = U(4)$ hence
\[
    \Theta_4 \simeq \bS^{\ad_4} \otimes \Sigma^\infty_+ U(4) \otimes \Sigma^\infty \Sigma^\mathrm{un}(\bP^3 - Q),
\]
where $\Sigma^\mathrm{un}$ is the unreduced suspension.
\end{example}

\subsection{Top derivative}\label{subsection:top-derivative}

For any variety $X$ and $\alpha \in H^2(X;\bZ)$ as in our main \cref{maintheorem:holomorphic}, we have shown that the suspension spectrum of the functor of holomorphic maps of degree $\alpha$ is $N$-polynomial, where $N = \dim H^0(X,\cL)$ and $\cL$ is any very ample holomorphic line bundle with first Chern class $\alpha$. Surprisingly, it turns out that the top derivative $\Theta_N$ admits a nice formula, as we shall explain in this section. As in the previous computations above, we will work with the unitary functor
\begin{equation}\label{equation:sections-functor-topderivatives}
    V \longmapsto \Sigma^\infty_+ \Gammahol(\cL \otimes (\bC^N \oplus V) - 0)
\end{equation}
and explain at the end how to implement the little modifications to get to the functor of holomorphic maps. We will reuse the ideas of \cref{section:polynomiality}, so we recall some notation. We had the identification~\eqref{equation:correspondence-sections-linear-maps}
\begin{equation}\label{equation:topderivative-linearmaps}
    \Gammahol(\cL \otimes (\bC^N \oplus V) - 0) \cong \set{A \in \Hom(\bC^N, \bC^N \oplus V)}{\bP(\ker A) \cap \iota X = \emptyset}
\end{equation}
where $\iota \colon X \hookrightarrow \bP(\bC^N)$ is the canonical embedding~\eqref{equation:ample-embedding}. Our first observation is:
\begin{lemma}
The stratification by rank on 
\[
    \set{A \in \Hom(\bC^N, \bC^N \oplus V)}{\bP(\ker A) \cap \iota X = \emptyset}
\]
is a Whitney stratification.
\end{lemma}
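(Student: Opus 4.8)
The plan is to recognise this stratification as the restriction to an open subset of the classical rank (determinantal) stratification of a full space of linear maps, and to use that the defining conditions of a Whitney stratification are local and hence pass to open subsets. Write $W = \bC^N \oplus V$ and $Y = \iota X$, which is a projective, hence compact, subvariety of $\bP(\bC^N)$, and set
\[
    \Phi = \set{A \in \Hom(\bC^N, W)}{\bP(\ker A) \cap Y = \emptyset},
\]
stratified by the locally closed pieces $\Phi_r = \Phi \cap \Hom^{N-r}(\bC^N, W)$ (recall $\dim \ker A = N - \rank A$).

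First I would invoke the standard fact that $\Hom(\bC^N, W) = \bigsqcup_{s} \Hom^s(\bC^N, W)$ is a Whitney stratification: the strata are the finitely many orbits of the linear action of $\mathrm{GL}(\bC^N) \times \mathrm{GL}(W)$, the closure of $\Hom^s$ is the determinantal variety $\bigsqcup_{t \leq s} \Hom^t$ (so the frontier condition holds, and each $\Hom^s$ is a locally closed smooth submanifold, as already used through \cref{lemma:normal-bundle-rank-matrices}), and the Whitney conditions~(a) and~(b), being local and $\mathrm{GL}(\bC^N) \times \mathrm{GL}(W)$-invariant, need only be checked at a single point of each of the finitely many relevant pairs of strata, where they hold by the generic Whitney property of complex algebraic stratifications. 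Second I would check that $\Phi$ is open in $\Hom(\bC^N, W)$: if $A_n \to A$ with $\bP(\ker A_n) \cap Y \neq \emptyset$ for every $n$, pick unit vectors $v_n \in \ker A_n$ with $[v_n] \in Y$, pass to a subsequence with $v_n \to v$ using compactness of the unit sphere of $\bC^N$, and note $[v] \in Y$ since $Y$ is closed while $A v = \lim_n A_n v_n = 0$; thus $\bP(\ker A) \cap Y \neq \emptyset$, so the complement of $\Phi$ is closed. Finally, the frontier condition and the Whitney conditions~(a) and~(b) persist when one intersects everything with the open set $\Phi$, and the intersection with $\Phi$ of the ambient stratum $\Hom^{N-r}(\bC^N, W)$ is exactly $\Phi_r$; hence $\Phi = \bigsqcup_r \Phi_r$ is a Whitney stratification.

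I do not anticipate a real obstacle. The only step requiring an argument rather than a citation is the openness of $\Phi$, which is the short compactness argument above and is precisely what legitimises restricting the ambient determinantal Whitney stratification to $\Phi$; everything else follows from the classical case combined with the locality of the Whitney conditions.
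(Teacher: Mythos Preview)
Your proposal is correct and follows essentially the same two-step approach as the paper: first argue that the rank stratification on the full $\Hom(\bC^N, \bC^N \oplus V)$ is Whitney because the strata are orbits of a transitive group action (the paper cites a lemma giving that the locus where condition~(b) fails is invariant of strictly smaller dimension, hence empty; your formulation via the generic Whitney property plus transitivity is equivalent), and then restrict to the open subset where $\bP(\ker A)\cap \iota X=\emptyset$. The only difference is cosmetic: you spell out the compactness argument for openness, whereas the paper simply asserts that the kernel condition is open.
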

\begin{proof}
The rank stratification on $\Hom(\bC^N, \bC^N \oplus V)$ is a Whitney stratification, because\footnote{Given an orbit, the subset of points not satisfying Whitney's condition (b) form a $\mathrm{GL}_n(\bC)$-invariant subset of strictly smaller dimension by \cite[Lemma~2.4]{NguyenTrivediTrotman}. Hence it has to be empty. See \cite{MOWhitney}.} it is the stratification by orbits under the action of the algebraic group $\mathrm{GL}_n(\bC)$. The condition on the kernel is an open condition, hence the restriction of the stratification is a Whitney stratification.
\end{proof}
As a consequence, by the control theory of Mather \cite{Mather}, there exist tubular neighbourhoods $T_r$ of the strata of $A$ where $\dim \ker A = r$ such that the space~\eqref{equation:topderivative-linearmaps} is the geometric realisation (homotopy colimit) of the \v{C}ech diagram:
\begin{equation}\label{equation:cech-diagram}
\begin{tikzcd}
\bigsqcup_i T_i & {\bigsqcup_{i<j} T_i \cap T_j} \arrow[l, shift left] \arrow[l, shift right] & {\bigsqcup_{i<j<k} T_i \cap T_j \cap T_k} \arrow[l, shift left=2] \arrow[l, shift right=2] \arrow[l] & \ldots \arrow[l, shift right=4] \arrow[l, shift right=2] \arrow[l] \arrow[l, shift left=2]
\end{tikzcd}
\end{equation}

\begin{example}
Let $X = \bP^1$ and $\cL = \cO(2)$. Then the stratification has only two pieces
\[
    \set{A}{\ker A = 0} \sqcup \set{A}{\bP(\ker A) \in \bP^2 - \iota\bP^1},
\]
respectively homotopy equivalent to $T_0$ and $T_1$. Denoting by $\nu$ the normal bundle of the second stratum inside the total space, the \v{C}ech diagram is homotopy equivalent to $T_0 \leftarrow S(\nu) \to T_1$.
\end{example}

Notice that the spaces in the \v{C}ech diagram already appear in the inductive argument of the proof of \cref{theorem:holomorphic-polynomiality}. In essence, we are just condensing that argument in a single diagram. Indeed, by inductively using \cref{lemma:normal-bundle-rank-matrices} we obtain homotopy equivalences
\begin{equation}\label{equation:tubular-neighbourhoods-intersection}
    T_{r_1} \cap \cdots \cap T_{r_i} \simeq \set{(\varphi_1,\dotsc,\varphi_{i-1},A)}{\dim \ker A = r_i, \ \dim\ker \varphi_j = r_j}
\end{equation}
for $r_1 < \cdots < r_i$, and where we simply write $A$ for a generic element in~\eqref{equation:topderivative-linearmaps}, and the extra morphisms are linear maps
\begin{align*}
    \varphi_{i-1} &\colon \ker A \lra (\image A)^\perp  \\
    \varphi_j &\colon \ker \varphi_{j+1} \to \image\left(A + \sum_{k=j+1}^{i-1}\varphi_k\right)^\perp \quad \text{ for } j=1,\dotsc,i-2
\end{align*}
To make sense of the sum of maps and the orthogonal complement, we use the inner product on $\bC^N \oplus V$ to identify quotients and orthogonal complements, and notice the decomposition
\[
    \bC^N = (\bC^N/\ker A) \oplus (\ker A / \ker \varphi_{i-1}) \oplus (\ker \varphi_{i-1} / \ker \varphi_{i-2}) \oplus \cdots \oplus (\ker \varphi_2 / \ker \varphi_1) \oplus \ker \varphi_1.
\]
The map
\begin{equation}\label{equation:all-kernels}
    (\varphi_1,\dotsc,\varphi_{i-1},A) \longmapsto (\ker \varphi_1, \dotsc, \ker \varphi_{i-1},\ker A)
\end{equation}
shows that the right hand side of~\eqref{equation:tubular-neighbourhoods-intersection} bundles over the space $\Flag(r_1,\dotsc,r_i,\bC^N;\iota X)$ defined to be
\begin{equation}\label{equation:iterated-tautological-bundle}
    \set{(K_1,\dotsc,K_{i-1},K_i)}{K_i \in \Grass(r_i,\bC^N;\iota X), \ K_j \in \Grass(r_j, K_{j+1}) \text{ for } j=1,\dotsc,i-1}.
\end{equation}
We have written $\Grass(-,-)$ for the Grassmannian and
\[
    \Grass(r,\bC^N;\iota X) =  \set{P \in \Grass(r, \bC^N)}{P \cap \iota X = \emptyset} \subset \Grass(r, \bC^N)
\]
for the locus of $r$-planes not intersecting $\iota X$. The topology on the space~\eqref{equation:iterated-tautological-bundle} is the natural one obtained by iterating the tautological bundle construction on Grassmannians. The fibre of the map~\eqref{equation:all-kernels} above $(K_1,\dotsc,K_{i-1},K_i)$ is the space
\[
    \set{(\varphi_1,\dotsc,\varphi_{i-1},A)}{\ker A = K_i, \ \ker \varphi_j = K_j} \simeq \Hom_\catJ(\bC^N/K, \bC^N \oplus V),
\]
where $K = \ker(A + \varphi_1 + \cdots + \varphi_{i-1})$ and the homotopy equivalence is via adding the morphisms. By Miller's stable splitting, the functor
\[
    V \longmapsto \Sigma^\infty_+\Hom_\catJ(\bC^N/K, \bC^N \oplus V)
\]
is $(N-\dim K)$-polynomial, with $j$th derivative given by $\bS^{\ad_j} \otimes \Sigma^\infty_+\Hom_\catJ(\bC^j,\bC^N/K) \otimes \bS^{jK}$. In particular, it has a non-zero $N$th derivative if and only if $K = 0$, i.e. $r_1 = 0$. Invoking \cref{lemma:parameterised-colim-derivatives}, we obtain
\[
    \Theta_N\big( \Sigma^\infty_+(T_{r_1} \cap \cdots \cap T_{r_i})(V) \big) \simeq \begin{cases}
        0 &\text{ if } r_1 \neq 0 \\
        \bS^{\ad_N} \otimes \Sigma^\infty_+U(N) \otimes \Sigma^\infty_+\Flag(r_1,\dotsc,r_i,\bC^N;\iota X) &\text{ if } r_1 = 0.
    \end{cases}
\]
The top derivative $\Theta_N$ of our functor of interest~\eqref{equation:sections-functor-topderivatives} is then obtained by combining this computation and the colimit diagram~\eqref{equation:cech-diagram}. This yields:
\[
\begin{tikzcd}
    \Theta_N \simeq \colim \bigg( \Theta_N \Sigma^\infty_+T_0 & {\bigsqcup_{0<i} \Theta_N\Sigma^\infty_+(T_0 \cap T_i)} \arrow[l] & {\bigsqcup_{0<i<j} \Theta_N \Sigma^\infty_+(T_0 \cap T_i \cap T_j)} \arrow[l, shift left=1] \arrow[l, shift right=1] \ \cdots \bigg)
\end{tikzcd}
\]
By inspection of our constructions, the morphisms between the derivatives are all induced by the natural projections and inclusions between the flag varieties $\Flag(r_1,\dotsc,r_i,\bC^N;\iota X)$. This was already observed in an easier case in the previous \cref{subsection:p1p1-case}.
We therefore obtain
\[
\begin{tikzcd}[column sep=1em]
    \Theta_N \simeq \bS^{\ad_N} \otimes \Sigma^\infty_+U(N) \otimes \colim \bigg( \bS^0 & {\bigsqcup_{0<i} \Sigma^\infty_+\Flag(0,i,\bC^N;\iota X)} \arrow[l] & {\bigsqcup_{0<i<j} \Sigma^\infty_+\Flag(0,i,j,\bC^N;\iota X)} \arrow[l, shift left=1] \arrow[l, shift right=1] \ \cdots \bigg)
\end{tikzcd}
\]
Recall that for a space $Y$, the cofibre (in pointed spaces) of $Y_+ \to S^0$ is the unreduced suspension $\Sigma^\mathrm{un}Y$. Applied to our situation, we get
\[
\begin{tikzcd}[column sep=1em]
    \Theta_N \simeq \bS^{\ad_N} \otimes \Sigma^\infty_+U(N) \otimes \Sigma^\infty\Sigma^\mathrm{un}\colim \bigg( {\bigsqcup_{0<i} \Flag(0,i,\bC^N;\iota X)} & {\bigsqcup_{0<i<j} \Flag(0,i,j,\bC^N;\iota X)} \arrow[l, shift left=1] \arrow[l, shift right=1] \ \cdots \bigg)
\end{tikzcd}
\]
where the colimit is taken in unpointed spaces. Now observe that this colimit is exactly the geometric realisation of the poset of non-empty linear subspaces $\emptyset \neq P \subset \bP(\bC^N) - \iota X$ ordered by inclusion. See e.g. \cite[Section~3]{Zivaljevic} for basics on topological posets. Let us record our computations:
\begin{proposition}\label{proposition:top-derivative}
Let $X$ be a smooth projective complex variety, and $\cL$ be a very ample line bundle on $X$. Let $N = \dim H^0(X,\cL)$, and $\iota \colon X \hookrightarrow \bP(\bC^N)$ be the canonical embedding. The $N$th derivative of the unitary functor
\[
    V \longmapsto \Sigma^\infty_+ \Gammahol(\cL \otimes (\bC^N \oplus V) - 0)
\]
is given by the formula
\[
    \Theta_N \simeq \bS^{\ad_N} \otimes \Sigma^\infty_+U(N) \otimes \Sigma^\infty\Sigma^\mathrm{un}|\Grass(X,\cL)|
\]
where $|\Grass(X,\cL)|$ is the geometric realisation of the topological poset $\Grass(X,\cL)$ of non-empty linear subspaces $\emptyset \neq P \subset \bP(\bC^N) - \iota X$ ordered by inclusion. The action of the unitary group $U(N)$ on $\Theta_N$ is the natural action on $\bS^{\ad_N} \otimes \Sigma^\infty_+U(N)$.
\end{proposition}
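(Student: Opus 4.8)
The proof assembles the discussion preceding the statement into four steps. First I would use the Whitney‑stratification lemma above together with Mather's control theory: it furnishes tubular neighbourhoods $T_r$ of the rank strata so that the linear‑algebra model \eqref{equation:topderivative-linearmaps} of $\Gammahol(\cL\otimes(\bC^N\oplus V)-0)$ is the homotopy colimit of the Čech diagram \eqref{equation:cech-diagram}. Since the normal data of the strata is supplied $V$‑naturally by \cref{lemma:normal-bundle-rank-matrices} (indeed the whole diagram is equivalent to the $V$‑natural iterated homotopy‑pushout decomposition already used in \cref{section:polynomiality}), this exhibits the unitary functor $V\mapsto\Sigma^\infty_+\Gammahol(\cL\otimes(\bC^N\oplus V)-0)$ as a homotopy colimit of the unitary functors $V\mapsto\Sigma^\infty_+(T_{r_1}\cap\dotsb\cap T_{r_i})(V)$.

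Next I would compute $\Theta_N$ of each term. Iterating \cref{lemma:normal-bundle-rank-matrices} identifies $T_{r_1}\cap\dotsb\cap T_{r_i}$ (for $r_1<\dotsb<r_i$) functorially with the space \eqref{equation:tubular-neighbourhoods-intersection}, which fibres over $\Flag(r_1,\dotsc,r_i,\bC^N;\iota X)$ via the map \eqref{equation:all-kernels} recording all the kernels; the fibre over a flag is the Stiefel manifold $\Hom_\catJ(\bC^N/K_1,\bC^N\oplus V)$, where $K_1$ is the smallest — hence total — kernel. By Miller's stable splitting in the form used throughout \cref{section:polynomiality}, together with \cref{lemma:shifted-tower} to account for the shift by $K_1$, the functor $V\mapsto\Sigma^\infty_+\Hom_\catJ(\bC^N/K_1,\bC^N\oplus V)$ has vanishing $N$th Weiss derivative unless $\dim K_1=0$, i.e.\ $r_1=0$, in which case its $N$th derivative is the constant spectrum $\bS^{\ad_N}\otimes\Sigma^\infty_+U(N)$. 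Applying \cref{lemma:parameterised-colim-derivatives} fibrewise over the flag variety then yields
\[
    \Theta_N\big(\Sigma^\infty_+(T_{r_1}\cap\dotsb\cap T_{r_i})\big)\simeq\begin{cases}0 & r_1\neq 0,\\ \bS^{\ad_N}\otimes\Sigma^\infty_+U(N)\otimes\Sigma^\infty_+\Flag(0,r_2,\dotsc,r_i,\bC^N;\iota X) & r_1=0.\end{cases}
\]

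Then I would feed this back into \eqref{equation:cech-diagram}. The functor $\Theta_N(-)$ preserves colimits — it is the fibre of $T_N\to T_{N-1}$, and both $T_k$ and this fibre commute with colimits of spectrum‑valued functors exactly as in \cref{lemma:polynomial-colimit-is-polynomial,lemma:parameterised-colimit-tower} — so $\Theta_N$ of the section‑space functor is the realisation of the semisimplicial spectrum obtained by applying $\Theta_N$ levelwise. By the previous display only the intersections containing the open stratum $T_0$ contribute, and the constant factor $\bS^{\ad_N}\otimes\Sigma^\infty_+U(N)$ splits off; here I would invoke, as was observed in \cref{subsection:p1p1-case}, that by inspection of the construction the residual face maps are induced by the evident projections among the flag varieties $\Flag(0,i_1,\dotsc,i_n,\bC^N;\iota X)$. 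This exhibits $\Theta_N\simeq\bS^{\ad_N}\otimes\Sigma^\infty_+U(N)\otimes C$, where $C$ is the realisation of the semisimplicial spectrum $[n]\mapsto\bigoplus_{0<i_1<\dotsb<i_n}\Sigma^\infty_+\Flag(0,i_1,\dotsc,i_n,\bC^N;\iota X)$, with a copy of $\bS^0$ in degree $0$ coming from $T_0$ alone.

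Finally I would identify $C$. The chains of subspaces with a forced zero at the bottom assemble into the nerve of the topological poset obtained from $\Grass(X,\cL)$ by adjoining a least element; since such a poset has contractible realisation, comparing with the sub‑semisimplicial object of chains of nonzero subspaces (whose realisation is $|\Grass(X,\cL)|$ by definition) exhibits $C$ as the cofibre of $\Sigma^\infty_+|\Grass(X,\cL)|\to\bS^0$, which is $\Sigma^\infty\Sigma^{\mathrm{un}}|\Grass(X,\cL)|$ by the identification recalled just before the statement. Lastly, the Weiss $U(N)$ acts only on the copies of $V$, hence only on the Miller factor $\bS^{\ad_N}\otimes\Sigma^\infty_+U(N)$ and trivially on the flag‑variety combinatorics, giving the claimed equivariance. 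I expect the main obstacle to be this last step together with the one before it: making the Mather/Čech presentation genuinely natural in $V$, and carefully checking that after discarding the non‑top strata the residual face maps are exactly the flag‑variety projections, so that the surviving semisimplicial spectrum really is the (unreduced suspension of the) nerve of the topological poset $\Grass(X,\cL)$.
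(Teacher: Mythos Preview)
Your proposal is correct and follows essentially the same route as the paper: the Whitney/Mather \v{C}ech presentation, the identification of each intersection as a Stiefel bundle over a partial flag variety, Miller's splitting to kill all terms with $r_1\neq 0$, and the recognition of the surviving diagram as the unreduced suspension of $|\Grass(X,\cL)|$. Your phrasing of the last step via adjoining a minimum and taking the cofibre is exactly the paper's ``$\mathrm{cofib}(Y_+\to S^0)\simeq\Sigma^{\mathrm{un}}Y$'' observation, and your caveats about $V$-naturality of the \v{C}ech diagram and about verifying the residual face maps are the flag projections are the same points the paper handles ``by inspection of our constructions''.
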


\begin{example}
For $X = \bP^1$ and $\cL = \cO(3)$, we have
\begin{align*}
    \Flag(0,\bC^4;\iota \bP^1) &= \{0\}, \\
    \Flag(0,1,\bC^4;\iota \bP^1) &\cong \bP^3 - \iota \bP^1, \\
    \Flag(0,2,\bC^4;\iota \bP^1) &\cong \Grass(2,\bC^4;\iota \bP^1), \\
    \Flag(0,1,2,\bC^4;\iota \bP^1) &\cong \set{(P,x)}{P \in \Grass(2,\bC^4;\iota \bP^1), \ x \in \bP(P)} \text{ (the tautological bundle)}.
\end{align*}
The two non-trivial natural maps appearing in the \v{C}ech diagram are the projections
\[
    \Flag(0,1,2,\bC^4;\iota \bP^1) \lra \Flag(0,2,\bC^4;\iota \bP^1), \quad (P,x) \longmapsto P
\]
and
\[
    \Flag(0,1,2,\bC^4;\iota \bP^1) \lra \Flag(0,1,\bC^4;\iota \bP^1), \quad (P,x) \longmapsto x \in \bP(P) \subset \bP^3 - \iota \bP^1.
\]
The geometric realisation of the poset $\Grass(\bP^1,\cO(3))$ is thus homotopy equivalent to the coequaliser of the diagram
\[
\begin{tikzcd}
    \set{(P,x)}{P \in \Grass(2,\bC^4;\iota \bP^1), \ x \in \bP(P)} \times \Delta^1 \arrow[r, shift right=1] \arrow[r, shift left=1] & (\bP^3 - \iota \bP^1) \sqcup \Grass(2,\bC^4;\iota \bP^1) 
\end{tikzcd}
\]
where the maps are the ones given above.
\end{example}

Let us now explain, as promised at the beginning of this section, how to obtain the top derivative of the functor
\begin{equation}\label{equation:topderivative-mapping-space}
    V \longmapsto \Sigma^\infty_+\Hol_\alpha(X, \bP(\bC^N \oplus V)),
\end{equation}
where $\alpha = c_1(\cL)$ as chosen at the beginning. When $H^1(X;\bZ) = 0$, then
\[
    \Hol_\alpha(X, \bP(\bC^N \oplus V)) \cong \Gammahol(\cL \otimes (\bC^N \oplus V) - 0) / \bC^\times
\]
and \cref{subsubsection:canonical-u1-action} explains in that case that the $N$th derivative of~\eqref{equation:topderivative-mapping-space} is given by 
\[
    \Theta_N \simeq \bS^{\ad_N} \otimes \Sigma^\infty_+PU(N) \otimes \Sigma^\infty\Sigma^\mathrm{un}|\Grass(X,\cL)|
\]
where $PU(N)$ is the projective unitary group. This applies in particular in the case of $X = \bP^n$. Notice especially that for $\bP^1$ we have computed \emph{all} the Weiss derivatives, by the combination of \cref{maintheorem:holomorphic} and \cref{proposition:top-derivative}. In general, we need to proceed as explained in \cref{subsubsection:picard-parameter}. That is, we pick a Poincaré line bundle $\cP$ and consider the canonical embedding
\[
    \epsilon \colon \Pic^\alpha(X) \times X \longhookrightarrow \bP\big( (p_*\cP)^\vee \big)
\]
where $p \colon \Pic^\alpha(X) \times X \to \Pic^\alpha(X)$ is the first projection. We can now repeat the whole argument with the flag varieties replaced by 
\[
    \Flag(r_1,\dotsc,r_i,\alpha; \epsilon) = \left\{
    ([\cL], K_1, \dotsc, K_i)\ 
    \middle\vert 
    \begin{array}{l}
        [\cL] \in \Pic^\alpha(X), \ \dim K_j = r_j, \text{ and } \\
        K_1 \subset K_2 \subset \cdots \subset K_i \subset H^0(X, \cL)^\vee \\
        \text{such that } K_i \cap \epsilon(\{[\cL]\} \times X) = \emptyset 
    \end{array} \right\}.
\]
In other words, we simply let $\cL$ vary in the moduli $\Pic^\alpha(X)$. We leave the details to the interested reader.

\printbibliography

\end{document}